\newtheorem{theorem}{Theorem}            % Bold title, italic text
\newtheorem{lemma}[theorem]{Lemma}
\newtheorem{prop}[theorem]{Proposition}
\newtheorem{mainthm}{Theorem}          % For main theorems, with customised numeration
\theoremstyle{definition}              % Bold title, roman text
\theoremstyle{remark}                  % Italic title, roman text
\newtheorem{step}{Step}
\newtheorem{remark}{Remark}
\newcommand{\N}{\mathbb{N}}                 % natural numbers
\newcommand{\Z}{\mathbb{Z}}                 % integer numbers
\newcommand{\R}{\mathbb{R}}                 % real numbers
\renewcommand{\H}{\mathscr{H}}              % Hausdorff measure
\newcommand{\T}{\mathrm{T}}
\newcommand{\gf}{\mathfrak{g}}
\renewcommand{\P}{\mathbb{P}}
\renewcommand{\a}{\mathbf{a}}
\renewcommand{\b}{\mathbf{b}}
\newcommand{\be}{\mathbf{e}}
\newcommand{\db}{\mathbf{d}}
\renewcommand{\d}{\mathrm{d}}               % Roman letters
\newcommand{\D}{\mathrm{D}}
\newcommand{\GG}{\mathscr{G}}
\newcommand{\LL}{\mathcal{L}}
\newcommand{\Sh}{\mathscr{S}}
\newcommand{\UU}{\mathscr{U}}
\newcommand{\epsi}{\varepsilon}
\newcommand{\eps}{\epsi}
\newcommand{\Harm}{\mathrm{Harm}}
\DeclareMathOperator{\dist}{dist}                                   % distance
\DeclareMathOperator{\sign}{sign}                                   % sign
\DeclareMathOperator{\ind}{ind}                                     % index
\DeclareMathOperator{\tang}{tan}
\DeclareMathOperator{\curl}{\curl}                                  % curl
\renewcommand{\div}{\mathrm{div}\,}                                 % divergence
\newcommand{\Vg}{\mathrm{vol}_g}
\renewcommand{\O}{\mathrm{O}}
\newcommand{\abs}[1]{\left| #1 \right|}                             % absolute value
\newcommand{\norm}[1]{\left\| #1 \right\|}                          % norm
\newcommand{\intr}{\mathrm{intr}}
\newcommand{\extr}{\mathrm{extr}}
\begin{document}

\title[Vortices for the extrinsic Ginzburg-Landau flow on surfaces]{Motion of vortices for the extrinsic Ginzburg-Landau flow for vector fields on surfaces}
\author{Giacomo Canevari}
\address{Dipartimento di Informatica, 
  Universit\`a di Verona,
  Strada le Grazie 15, \mbox{37134} Verona, Italy.}
\email[G. Canevari]{giacomo.canevari@univr.it}
\author{Antonio Segatti}
\address{Dipartimento di Matematica ``F. Casorati'',
  Universit\`a di Pavia,
  Via Ferrata 1, \mbox{27100} Pavia, Italy.}
\email[A. Segatti]{antonio.segatti@unipv.it}

\date{\today}

%%%%% PER METTERE LA DEDICA SUBITO DOPO IL TITOLO< PRIMA DELL'ABSTRACT
%%%%% (commenta se non ti piace)
\dedicatory{\large{Dedicated to Maurizio Grasselli on the occasion of his 60th anniversary, \\ with friendship and admiration}}

\begin{abstract}
We consider the gradient flow of a Ginzburg-Landau functional of the type
\[
F_\eps^{\extr}(u):=\frac{1}{2}\int_M \abs{D u}_g^2 + \abs{\Sh u}^2_g
+\frac{1}{2\eps^2}\left(\abs{u}^2_g-1\right)^2\Vg
\]
which is defined for tangent vector fields
(here $D$ stands for the covariant derivative) on a closed surface~$M\subseteq\R^3$
and includes extrinsic effects via the shape operator $\Sh$
induced by the Euclidean embedding of~$M$. The functional depends on the small parameter
$\eps>0$. When $\eps$ is small it is clear from the structure of the Ginzburg-Landau functional that $\abs{u}_g$  ``prefers'' to be close to $1$.  However, due to the incompatibility for vector fields on $M$ between the Sobolev regularity and the unit norm constraint, when $\eps$ is close to $0$, it is expected that a finite number of singular points (called vortices) having non-zero index emerges (when the Euler characteristic is non-zero). This intuitive picture has been made precise in the recent work by R. Ignat \& R. Jerrard \cite{JerrardIgnat_full}. 
In this paper we are interested the dynamics of vortices generated by $F_\eps^{\extr}$. 
To this end we study the behavior when $\eps\to 0$ of the solutions of the (properly rescaled) gradient flow of $F_\eps^{\extr}$. 
In the limit $\eps\to 0$ we obtain the effective dynamics of the vortices. The dynamics, as expected, is influenced by both the intrinsic and extrinsic properties of the surface~$M\subseteq\R^3$. 

\medskip {\bf Keywords:} Ginzburg-Landau; Vector fields on surfaces; Gradient flow of the renormalized energy; $\Gamma$-convergence.

\medskip {\bf MSC:} 35Q56 (37E35, 49J45, 58E20). 
 
\end{abstract}

\maketitle
\tableofcontents

%%%%%% PER METTERE LA DEDICA DOPO LA TAVOLA DEI CONTENUTI
%%%%%% (decommenta se ti piaceva di più com'era prima)
% \par{\centering {\itshape Dedicated to Maurizio Grasselli on the occasion of his 60th anniversary, \\ with friendship and admiration}\par}

\section{Introduction and main results}
\label{sec:intro}

In this paper we continue the analysis of the dynamics of Ginzburg-Landau 
vortices on a two-dimensional Riemannian manifold started in~\cite{Ag-Reno}
by considering the effects of the extrinsic geometry of the manifold. 
More precisely, let~$M$ be a closed, oriented two-dimensional Riemannian manifold isometrically embedded in~$\mathbb{R}^3$, with genus $\gf = \frac{2-\chi(M)}{2}\geq 0$, $\chi(M)$ being the Euler Characteristic of~$M$. We denote with $g$ the metric induced by the embedding and with $D$ the Levi Civita connection of the metric~$g$ on~$M$. 
Let us denote with~$N$ the unit normal to~$M$ chosen accordingly with the orientation. The role of the extrinsic geometry, which is related to the fact that the manifold~$M$ is embedded in~$\R^3$, is played by the so-called shape operator of~$M$. 
The shape operator is the self-adjoint operator 
\begin{equation}
\label{eq:shape}
\Sh: TM\to TM, \qquad \Sh v: = -\nabla_{v}N,
\end{equation} 
where~$\nabla$ is the connection with respect to the standard metric in~$\R^3$.
Being self-adjoint, we can define the operator 
\[
\Sh^2 :TM \to TM, \qquad \left(\Sh^2 u, \, v\right)_g := (\Sh u, \, \Sh v)_g\quad \textrm{for any } u,v\in TM.
\]
For any $\eps>0$ we consider the following evolution for the vector field $u_\eps\colon M\times [0, \, +\infty)\to T M$ 
\begin{equation}
\label{eq:GL_intro}
\begin{cases}
\displaystyle\frac{1}{\vert \log\eps\vert}\partial_t u_\eps -\Delta_g u_\eps + \Sh^2 u_\eps+\frac{1}{\eps^2}\left(\vert u_\eps\vert^2-1\right)u_\eps = 0,\,\,\,\,\hbox{ a.e. in }M\times (0,T),\\[.4cm]
\displaystyle u_\eps(x,0) = u_0^{\eps}\,\,\,\,\hbox{ a.e. in }M.
\end{cases}
\end{equation}
The operator $\Delta_g:= -D^* D$ is the so-called rough Laplacian ($D^*$ is the adjoint of the covariant derivative, see \cite[Section 2]{Ag-Reno} for the definition).
The evolution~\eqref{eq:GL_intro} is the gradient flow of the following  functional (called the extrinsic Ginzburg-Landau functional, see \cite{ssvM3AS})
\begin{equation}
\label{eq:extrinsic_energy}
F_\eps^{\extr}(u):=\frac{1}{2}\int_M \abs{D u}_g^2 + \abs{\Sh u}^2_g+\frac{1}{2\eps^2}\left(\abs{u}^2_g-1\right)^2\Vg.
\end{equation}
Given a smooth vector field $u$, we smoothly extend it to a vector field on $\R^3$ and we call this extension $\tilde{u}$. For $p\in M$, we define the surface gradient of $u$ as 
\begin{equation}
\label{eq:surface}
\nabla_s u(p) := \nabla \tilde{u}(p)\,P(p),
\end{equation} 
where $P(p):=(\textrm{Id} - N(p)\otimes N(p))$ is the orthogonal projection on $T_p M$. This differential operator is well defined and does not depend on the chosen extension $\tilde{u}$. Moreover, in general, $\nabla_s u\neq D u =P \nabla \tilde{u} $. Finally, thanks to the Gauss Formula
\[
\nabla_ v u = D_v u +   (\Sh u, v)_g N \qquad 
\textrm{for any }v\in T_p M, \ p\in M,
\]
we readily get 
\[
\nabla_s u[v] =\nabla_v u = D_v u +  (\Sh u,  v)_gN
\qquad \textrm{for any } v\in T_p M, \ p\in M.
\]
Thus, since the above decomposition is orthogonal, we deduce
\begin{equation}
\label{eq:extrinsic_distorsion}
\abs{\nabla_s u}^2 = \abs{D u}^2 + \abs{\Sh u}^2
\end{equation}
at any point of~$M$.
Therefore, the extrinsic Ginzburg-Landau energy~\eqref{eq:extrinsic_energy} rewrites as 
\begin{equation}
\label{eq:extrinsic_energy_bis}
F_\eps^{\extr}(u):=\frac{1}{2}\int_M \abs{\nabla_s u}^2+\frac{1}{2\eps^2}\left(\abs{u}^2_g-1\right)^2\Vg.
\end{equation}
This type of energy enters in the modeling of the so-called Nematic Shells. A Nematic Shell is a thin film of nematic liquid crystal coating a rigid and curved substrate $M$, typically modeled as a two-dimensional closed and oriented surface (see, e.g., \cite{LubPro92}, \cite{Nelson02}, \cite{Straley71}). 
In this framework, the preferred direction of the molecular alignment is usually described by a unit-norm vector field tangent to~$M$ and the energy $F_\eps^{\extr}$ is a 
relaxation (relaxing in particular the unit-norm constraint) of the elastic energy (see \cite{NapVer12E}, \cite{NapVer12L}, \cite{ssvPRE}, \cite{ssvM3AS})
% defined on unit-norm vector fields,
\begin{equation}
\label{eq:one_constant}
F_0^\extr(u) := 
\begin{cases}
 \dfrac{1}{2}\displaystyle\int_{M}\abs{\nabla_s u}^2 \Vg
  \qquad &\textrm{if } \abs{u}=1 \ \textrm{a.e. in } M\\
 +\infty\qquad &\textrm{otherwise}
\end{cases}
\end{equation}
A feature of the functional~$F_0^\extr$ is to incorporate,
via the surface gradient~$\nabla_s$,
the extrinsic geometry of~$M$ in the local distorsion element. 
From the analysis standpoint,
due to the incompatibility between the unit-norm constraint and the Sobolev regularity, this energy is well-defined and not identically equal to~$+\infty$
only if~$\chi(M)=0$ (see \cite{ssvM3AS} and \cite{AGM-VMO}).
% {\BBB (oppure: the functional~$F_0^\extr$ is identically equal to~$+\infty$
% for any surface with non-zero Euler Characteristic~$\chi(M)\neq 0$).}
% {\BBB \textbf{and this explain the reason for the relaxation of the unit-norm constraint. The relaxation can be removed with different results according with the topology of $M$ EH?} }
When $\chi(M)=0$ one can prove (see e.g.~\cite{gamma-discreto}) that $F_\eps^{\extr}\xrightarrow{\eps\to 0} F_0^\extr$
in the sense of $\Gamma$-convergence. 
When $\chi(M)\neq 0$ the analysis is more involved as one has to deal with the emergence of vortices. In this case, Ignat \& Jerrard obtained (see \cite{JerrardIgnat_full}) a $\Gamma$-convergence result for $F_\eps^{\extr}$. After a proper scaling, what emerges in the limit $\eps\to 0$ is a renormalized energy that takes into account the interactions between the vortices and between the vortices and the geometry of the surface, both intrinsic and extrinsic. 
This energy is given by the sum of the renormalized energy obtained in the intrinsic case and of a new, purely extrinsic, term. 
%Since this is the starting point of our analysis, we will briefly recall the results of \cite{JerrardIgnat_full}. 

As in the analysis of the intrinsic case, a central role 
is played by the so-called vorticity. 
We recall that that the vorticity~$\omega(u)$ of a smooth vector field~$u$
is defined as the $2$-form
\begin{equation}
\label{eq:vorticity_intro}
 \omega(u):= \d j(u) + \kappa \, \Vg,
\end{equation}
where~$\kappa$ is the Gauss curvature of~$M$ and~$j(u)$ is the $1$-form 
\begin{equation}
\label{eq:prejac_intro}
 j(u):= (D u, iu)_g.
\end{equation}
The symbol~$i$ denotes an almost complex structure on~$M$, that is,
an operator~$TM\to TM$ which restricts to a linear map
on each tangent plane~$T_p M$ and satisfies~$i^2+1 = 0$
(where~$1$ denotes the identity on~$TM$). 
In the Euclidean setting, when $u\colon\R^2\to\R^2$, 
the vorticity~$\omega(u)$ essentially reduces to the 
Jacobian determinant of~$u$, i.e.~$\omega(u) = 2(\det \nabla u) \, \d x$. 

Let~$W^{1,p}_{\tang}(M)$ be the set of maps~$u\in W^{1,p}(M, \, \R^3)$
such that~$u\cdot N = 0$ a.e. (see e.g.~\cite{ssvM3AS}, \cite{JerrardIgnat_full}
 and \cite{Ag-Reno} for the details). Since 
\[
 F_\eps^{\extr}(u)
  \ge F_\eps(v)
  := \frac{1}{2}\int_M \abs{D u}_g^2 +\frac{1}{2\eps^2}\left(\abs{u}^2_g-1\right)^2\Vg \qquad \textrm{for any } u\in W^{1,2}_{\tang}(M),
\]
sequences $u_\eps$ satisfying a energy bound such as $F_\eps^{\extr}(u_\eps) \leq  n\pi\abs{\log\eps} + C$
(for some constants~$n$ and $C$ that do not depend on~$\eps$) 
have compact vorticities. More precisely (see \cite{JerrardIgnat_full}),
there exist distinct points $a_1$, \ldots, $a_n$ in~$M$, 
integers~$d_1$, \ldots, $d_n$,
a form~$\xi\in\Harm^1(M)$ and a (non-relabelled) subsequence
such that, denoting with $\mathbb{P}_H$ the $L^2$-projection 
onto the space~$\Harm^1(M)$ of harmonic $1$-forms,
\begin{align*}
&\omega(u_\eps)\xrightarrow{\eps\to 0} 2\pi \sum_{j=1}^n d_j\delta_{a_j}\qquad \hbox{ in } W^{-1,p}(M) \quad
 \textrm{for any } p\in (1, \, 2) \\
& \mathbb{P}_H j(u_\eps) \xrightarrow{\eps\to 0}\xi
\end{align*}
The integers~$d_j$ represent the charges of
topological singularities which arise at the points~$a_j$.
They satisfy
\[
 \sum_{j=1}^n d_j = \chi(M),
\]
in accordance with the Poincar\'e-Hopf theorem.
The harmonic $1$-form $\xi\in \Harm^1(M)$, 
which emerges when the genus $\gf\neq 0$, depends non-trivially
on~$\a:=\left(a_1, \, \ldots, \, a_n\right)$
and~$\db:=(d_1, \, \ldots, \, d_n)$, as it must satisfy
the constraint $\xi\in\mathcal{L}(\a, \, \db)$
where~$\mathcal{L}(\a, \, \db)$ is a suitable subset of~$\Harm^1(M)$
% defined by the position and the charges of the singularities.
(see~\cite[Section~2.1]{JerrardIgnat_full} and \cite[Section~3]{Ag-Reno}
for the definition of~$\mathcal{L}(\a, \, \db)$.)

In case the~$u_\eps$'s are minimizers of~$F_\eps^{\extr}$,
the results of~\cite{JerrardIgnat_full} provide a more detailed description
of the asymptotic behaviour as~$\eps\to 0$. First, the number of 
the singular points~$a_1$, \ldots, $a_n$ that arise in the limit is
exactly equal to~$n = \abs{\chi(M)}$, and each singularity 
has degree~$d_j = \sign(\chi(M))$. Second, not only the 
vorticities~$\omega(u_\eps)$ but also the minimisers~$u_\eps$
themselves converge, strongly in~$W^{1,p}(M)$ for any~$p < 2$,
up to extraction of a subsequence. The limit field~$u_0$ 
belongs to~$W^{1,p}(M)$ for any~$p < 2$, 
is smooth in~$M\setminus \{a_1, \, \ldots, \, a_n\}$, 
and satisfies~$\abs{u_0} = 1$ a.e. Moreover, $u_0$ can be written
in the form
\begin{equation} \label{u0}
 u_0 = e^{i\theta} u^* := (\cos\theta) u^* + (\sin\theta) i u^*
\end{equation}
The vector field~$u^*$ in the right-hand side of Equation~\eqref{u0}
is the so-called \emph{canonical harmonic field}
associated with~$(\a, \, \db, \, \xi)$.
That means, $u^*$ is a $W^{1,1}$-vector field on~$M$
that satisfies~$\abs{u^*}= 1$ a.e.~and~$j(u^*) = \d^*\Psi + \xi$,
where the $2$-form~$\Psi = \Psi[\a, \, \db]$ is uniquely determined 
(up to an additive constant) as a solution of 
\[
 -\Delta \Psi = 2\pi \sum_{j=1}^n d_j \delta_{a_j} -\kappa \, \Vg.
\]
Such a vector field~$u^*$ exists if and only if~$\xi$
satisfies the constraint~$\xi\in\mathcal{L}(\a, \, \db)$;
if it exists, $u^*$ is essentially unique
(i.e., it is unique up to a constant rotation; 
see~\cite[Theorem~2.1]{JerrardIgnat_full}).
The limit field~$u_0$ differs from the canonical harmonic
vector field~$u^*$ by a rotation~$e^{i\theta}$.
The rotation angle~$\theta\in H^1(M)$ in~\eqref{u0}
must be chosen so as to minimize the functional
\begin{equation}
\label{eq:extr_en_intro}
 \GG(\a, \, \db, \, \xi, \, \theta) 
 := \frac{1}{2}\int_{M}\abs{\d \theta}^2_{g} +\abs{\Sh(e^{i\theta}u^*)}^2_{g}\Vg
\end{equation}
and~$u^* = u^*[\a, \, \db, \, \xi]$ is as above.
The functional~$\GG$ accounts for the ``extrinsic''
contributions to the energy, i.e., the effects of the shape operator.
Finally, the results of~\cite{JerrardIgnat_full} provide an energy expansion
(by $\Gamma$-convergence); in particular,
minimizers~$u_\eps$ of~$F_\eps^\extr$ satisfy
\begin{equation} \label{eq:F-Gamma}
 F_\eps^{\extr}(u_\eps) = \pi n\abs{\log\eps}
  + \widetilde{W}(\a, \, \db, \, \xi) + n\gamma + \mathrm{o}(1)
  \qquad \textrm{as }\eps\to 0.
\end{equation}
The constant $\gamma>0$, analogously to the intrinsic case,
is the so-called core energy and accounts for the energy
in a suitably small neighbourhood of each singular point $a_j$. 
The energy $\widetilde{W}$ is the so-called \emph{renormalized energy},
that accounts for the interactions among the singularities.
The renormalised energy accounts for both intrinsic and 
extrinsic contributions. It is given by
\begin{equation}
\label{eq:renoJI}
\widetilde{W}(\a, \, \db, \, \xi) := W^{\intr}(\a, \, \db, \, \xi)
 + \inf_{\theta\in H^1(M)}\GG(\a, \, \db, \, \xi, \, \theta)
\end{equation}
The intrinsic component~$W^\intr$ is defined as
\begin{equation}
 \label{eq:renoJIintr}
  W^{\intr}(\a, \, \db, \, \xi) := \lim_{\rho\to 0}
  \left(\frac{1}{2} \int_{M\setminus\cup_{j=1}^n B_\rho(a_j)} 
  \abs{ Du^*}^2 \Vg - \pi \sum_{j=1}^n d_j^2 \abs{\log\rho} \right)
\end{equation}
where~$u^* = u^*[\a, \, \db, \, \xi]$ is the (essentially unique)
canonical harmonic field for~$(\a, \, \db, \, \xi)$. 
The extrinsic contributions are accounted for by the functional~$\GG$,
as defined in~\eqref{eq:extr_en_intro}. While it is known that~$W^\intr$
is well-defined and smooth (see e.g.~\cite[Proposition~2.4]{JerrardIgnat_full}),
it is unclear whether~$\widetilde{W}$ is smooth, 
for the functional~$\GG(\a, \, \db, \, \xi, \, \cdot)$ is not 
convex and it might have several minimizers.

When dealing with the analysis of the solutions of \eqref{eq:GL_intro} as $\eps\to 0$, it is key (see \cite{SS-GF}, \cite{Ag-Reno}) to obtain an estimate of the type
\begin{equation}
\label{eq:lb_grad_desiderio}
\begin{split}
   \liminf_{\eps\to 0} \frac{\abs{\log\eps}}{2}
    \int_{M}\abs{ -\Delta_g u_\eps(t) 
     + \frac{1}{\eps^2}(\vert u_\eps(t)\vert^2-1)u_\eps(t)}^2_{g}\,\Vg
     \geq  \frac{1}{2\pi} \sum_{j=1}^n
     \abs{\nabla_{a_j} \widetilde{W}(\a(t),\mathbf{d},\xi(t))}_g^2
  \end{split}
\end{equation}
Unfortunately, for the moment, such an estimate is \emph{not} available. 
The reason is twofold. The first reason, as we have seen,
is that the minimization of~$\GG$ introduces a source of 
non-smoothness in the energy~$\widetilde{W}$ and makes 
it difficult to characterize the gradient of~$\widetilde{W}$.
The second reason is the following. 
Given a sequence of solutions~$u_\eps$ of~\eqref{eq:GL_intro}
that satisfy suitable energy estimates, 
we can prove, as in the intrinsic case (see \cite{Ag-Reno}), 
that the sequence~$(u_\eps(t))_{\eps> 0}$ is compact
for a.e.~$t$ in a suitable subinterval~$[0, \, T^*)$
%(where~$T^*\leq T$ can be interpreted as the first collision time).
More precisely, for a.e.~$t\in [0, \, T^*)$ there exists a subsequence
$\eps_h(t)\to 0$, possibily depending on~$t$, such that
$u_{\eps_h(t)}$ converges (strongly in~$W^{1,p}_{\tan}(M)$ for~$p<2$)
to a vector field of the form
\[
 u_0(t) = e^{i\theta(t)} u^*(t)
\]
As in~\eqref{u0}, $u^*(t)$ is a canonical harmonic field
with singularities at~$\a(t) = (a_1(t), \, \ldots, \, a_n(t))$
of degrees~$\db = (d_1, \, \ldots, \, d_n)$ 
and harmonic component~$\xi(t)\in \mathcal{L}(\a(t), \, \db)$,
while~$\theta(t)\in C^1(M)$ is a critical point of the
functional~$\GG(\a(t), \, \db, \, \xi(t), \, \cdot)$.
Unfortunately, it is not clear if~$\theta(t)$ is indeed 
a minimizer of~$\GG$ and not a critical point only.
For these reasons, we work with the renormalized energy 
\begin{equation}
\label{eq:reno_intro}
  W(\a, \, \db, \, \xi, \, \theta) := W^{\intr}(\a, \, \db, \, \xi) 
  + \GG(\a, \, \db, \, \xi, \, \theta),
\end{equation}
where  $(\a,\db)\in \mathscr{A}^n,\, \xi\in \Harm^1(M),\, \theta\in H^1(M).$
In Section \ref{sec:gradient} we prove that such an energy
is smooth in~$\a$ for any $\theta\in H^1(M)$ and we characterize 
its gradient with respect to~$\a$ when $\theta$ is a critical point of~$\GG$. 
Defining the gradient of~$W$ with respect to the variable~$\a$ 
requires some care, for~$\a$, $\db$ and~$\xi$ are not independent variables ---
they are related to each other by the constraint~$\xi\in\mathcal(\a, \, \db)$;
see~\eqref{gradW} for more details.
% However, it turns out that for any~$\xi\in\mathcal{L}(\a, \, \db)$
% there exists a unique smooth function~$\b\mapsto\Xi(\b)$, defined for~$\b\in M^n$
% close enough to~$\a$, such that $\Xi(\a) = \xi$. We define
% \[
%  \nabla_{\a} W(\a, \, \db, \, \xi, \, \theta) 
%   := \nabla_{\a} W(\b, \, \db, \, \Xi(\b), \, \theta)_{|\b=\a}
% \]
As it turns out, once we prescribe an initial condition~$\xi(0) = \xi^0$,
then the evolution of the component~$\xi(t)$ 
is completely determined by the equation for~$\a(t)$
and the constraint~$\xi(t)\in\mathcal{L}(\a(t), \, \db)$.

As in \cite{Ag-Reno}, we consider solutions of~\eqref{eq:GL_intro} 
with well-prepared initial conditions. 
We say that the initial condition~$u^0_\eps$ is well-prepared 
if satisfies the following set of hypothesis.
Given~$n\in\Z$, $n\geq 1$,
we consider a~$n$-uple of \emph{distinct}
points~$\a^0 = (a^0_1, \, \ldots, \, a^0_n)$ in~$M$
and a~$n$-uple of integers~$\db = (d_1, \, \ldots, \, d_n)$
such that
\[
 \sum_{j=1}^n d_j = \chi(M), \qquad \abs{d_j} = 1 \quad \textrm{for any } j.
\]
Moreover, we fix a harmonic $1$-form $\xi^0$ such that
$\xi^0\in \mathcal{L}(\a^0, \, \db)$ and $\theta^0\in H^1(M)$.
% ({\color{red}relation with hodge decomposition of the limit of $u_\eps^0$?})
% Once we have chosen $\a^0$, $\db$, $\xi^0$ as above, 
We consider vector fields~$u^0_{\eps}\in H^1_{\tang}(M)$ 
% (see {\color{blue} reference} for the definition of this Sobolev space) 
such that 
\begin{align}
 & \omega (u^{0}_\eps)\xrightarrow{\eps \to 0} 2\pi \sum_{j=1}^n d_j \delta_{a_j^{0}}\, \,\,\,\text{ in } W^{-1,p}(M)\label{eq:initial_vorticity_intro}\\
 & F_\eps^{\extr}(u_\eps^{0}) = \pi n \abs{\log\eps}
 + W(\a^0, \, \db, \, \xi^0, \, \theta^0) + n\gamma + \mathrm{o}_{\eps\to 0}(1),\label{eq:well_prepared_intro} \\
 & \norm{u^0_\eps}_{L^\infty(M)} \leq 1. \label{eq:initial_Linfty_intro}
\end{align} 
% % {\BBB In precedenza, l'ipotesi~\eqref{eq:well_prepared_intro}
% % era di forma diseguaglianza: $F_\eps^{\extr}(u_\eps^0)\leq\ldots$
% % L'ho trasformata in un'eguaglianza per essere sicuro di avere
% % l'eguaglianza in~$\varphi(0) = W(\a^0, \, \db, \, \xi^0, \, \theta^0)$.
% % Osservazione~bis: l'ipotesi~\eqref{eq:well_prepared_intro}
% % non implica necessariamente che~$\theta^0$ sia imparentato
% % con~$u^0 = \lim_{\eps\to 0} u^0_\eps$. Per esempio, se~$u_0 = e^{i\theta^*} u^*$
% % con~$u^*$ armonico canonico per~$(\a^0, \, \db, \, \xi^0)$,
% % \eqref{eq:well_prepared_intro} \`e soddisfatta da ogni funzione~$\theta^0$
% % tale che~$\GG(\a^0, \, \db, \, \xi^0, \, \theta^*) 
% % = \GG(\a^0, \, \db, \, \xi^0, \, \theta^0)$.}
%Then, we prove that for solutions $u_\eps$ with well-prepared initial conditions there holds
%\[
%\liminf_{\eps\to 0}E_\eps(u_\eps(t))-n\pi \abs{\log\eps}-n\gamma \ge W(\a(t),\db,\xi(t),\theta(t))
%\]
%and that 
%\[
%\begin{split}
%   \liminf_{\eps\to 0} \frac{\abs{\log\eps}}{2}
%    \int_{M}\abs{ -\Delta_g u_\eps 
%     + \frac{1}{\eps^2}(\vert u_\eps\vert^2-1)u_\eps}^2_{g}\,\Vg
%     \geq  \frac{1}{2\pi} \sum_{j=1}^n
%     \abs{\nabla_{a_j} W(\a(t),\mathbf{d},\xi(t),\theta(t))}_g^2
%  \end{split}
%\]
%
%
%
%
The main result of the paper is the following
\begin{mainthm}
\label{th:main1}
Let $(M,g)$ be a closed oriented two-dimensional Riemannian manifold isometrically embedded in $\mathbb{R}^3$. 
Let $u_\eps$ be a sequence of solutions of~\eqref{eq:GL_intro}
% \begin{equation}
% \label{eq:GL_intro}
% \begin{cases}
% \displaystyle\frac{1}{\vert \log\eps\vert}\partial_t u_\eps -\Delta_g u_\eps + \displaystyle\frac{1}{\eps^2}(\vert u_\eps\vert^2-1)u_\eps = 0,\,\,\,\,\hbox{ a.e. in }M\times (0,T)\\
% \\
% \displaystyle u_\eps(x,0) = u^{0}_{\eps}\,\,\,\,\hbox{ a.e. in }M,
% \end{cases}
% \end{equation}
with $u_\eps^{0}\in H^1_{\tang}(M)$ a sequence of well-prepared initial conditions. 
Then there exists a time $T^*\in (0,T)$ and a non increasing function 
$\varphi:[0,T^*)\to \R$ such that for any $t\in (0,T^*)$
\begin{align*}
& \omega(u_\eps(t))\xrightarrow{\eps\to 0}  2\pi\sum_{j=1}^n d_j\delta_{a_j(t)}\qquad \hbox{ in } W^{-1,p}(M) \quad
 \textrm{for any } p\in (1, \, 2) \\
& \mathbb{P}_H\left(j(u_\eps(t))\right) \xrightarrow{\eps\to 0}\xi(t)\\
& F_\eps^{\extr}(u_\eps(t))-n\pi\abs{\log\eps}-n\gamma \xrightarrow{\eps\to 0}\varphi(t)
%&\mathbb{P}_{\d}\left(j(u_\eps(t))\right)\xrightarrow{\eps\to 0}\d \theta(t),\qquad\hbox{ weakly in }L^p(M),
\end{align*}
where $\a(t):=(a_1(t), \, \dots, \, a_n(t))$ is %, for any~$t\in (0, \, T^*)$,
an~$n$-uple of distinct points such that~$\a\in H^1(0, \, T^*; \, M^n)$, $\a(0) = \a^0$,
% where $\a:[0,T^*]\to M^n$ given by $\a:=(a_1,\dots,a_n)\in H^1(0,T^*;M^n)$ is such that $\a(0) = \a^0$ and 
% $(\a(t),\,\db)\in \mathscr{A}^n$ for any $t\in (0,T^*)$. 
and $\xi\in H^{1}(0,T^*;\Harm^1(M))$ with 
\begin{equation} \label{eq:xi_mainth}
 \xi(t)\in \mathcal{L}\left(\a(t), \, \db\right) 
  \quad \textrm{for any } t\in (0, \, T^*), 
  \qquad \xi(0) = \xi^0. 
\end{equation}
Moreover, there exists a measurable function
$\theta\colon(0, \, T^*)\to C^1(M)$ such that
\[
\partial_\theta \GG(\a(t), \, \db, \, \xi(t), \, \theta(t))=0
\qquad \textrm{for a.e.~} t\in (0, \, T^*)
\]
Finally, for a.e.~$s$, $t$ with~$0\le s<t<T^*$, there holds
\begin{equation}
\label{eq:gradflowW_mainth}
\begin{split}
-\frac{\d}{\d t}\varphi(t) &\ge \frac{\pi}{2}\abs{\a'(t)}^2_{g} + \frac{1}{2\pi}\abs{\nabla_{\a}W(\a(t), \, \db, \, \xi(t),\,\theta(t))}_g^2,\\
%&\frac{\pi}{2} \int_{s}^t \abs{\a'}^2_{g}(\tau)\d \tau + \frac{1}{2\pi}\int_{s}^t \abs{\nabla_{\a}^{-}W}^2(\a(\tau), \, \db, \, \xi(\tau),\,\theta(\tau),\varphi(\tau)) \d \tau + \varphi(t)\le \varphi(s)\\
 \varphi(t) &\ge W(\a(t),\,\db,\,\xi(t),\,\theta(t)), \\
  \varphi(0) &= W(\a^0,\,\db,\,\xi^0,\,\theta^0).
%\begin{cases}
%\displaystyle\frac{\d}{\d t} \a(t) = 
%-\frac{1}{\pi}\nabla_{\a} \left(W^{\intr}(\a(t),\db,\xi(t)) + \GG(\a(t),\db, \xi(t), \theta(t))\right)
%\qquad \textrm{for any } t\in (0,T^*),\\[.4cm]
%\nabla_\theta \GG(\a(t),\db, \xi(t), \theta(t)) = 0,\\[.4cm]
%\a(0) = \a^0.
%\end{cases}
\end{split}
\end{equation}
\end{mainthm}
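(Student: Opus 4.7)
The plan is to follow the Sandier--Serfaty scheme for gradient flows (as adapted in~\cite{Ag-Reno} for the intrinsic case), but to track carefully the additional degree of freedom~$\theta(t)$ arising from the extrinsic term~$\GG$. First I would exploit the well-preparedness hypothesis~\eqref{eq:well_prepared_intro}, together with the standard energy dissipation identity for the rescaled flow~\eqref{eq:GL_intro},
\[
 F_\eps^{\extr}(u_\eps(t)) + \frac{1}{\abs{\log\eps}}\int_0^t \int_M \abs{\partial_t u_\eps}_g^2 \, \Vg \, \d s = F_\eps^{\extr}(u_\eps^0),
\]
to obtain the uniform bound $F_\eps^{\extr}(u_\eps(t)) \leq \pi n \abs{\log\eps} + C$ for all $t\in[0,T)$. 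Combined with the Ignat--Jerrard compactness results, this gives, for a.e.~$t$ and along a subsequence (depending on~$t$), convergence of $\omega(u_\eps(t))$ to a sum of Dirac masses supported at points~$\a(t) = (a_1(t),\dots,a_n(t))$ with integer weights, convergence of $\mathbb{P}_H j(u_\eps(t))$ to some $\xi(t)\in \mathcal{L}(\a(t),\db)$, and strong convergence in $W^{1,p}_{\tang}$ of $u_\eps(t)$ to a limit of the form $e^{i\theta(t)}u^*[\a(t),\db,\xi(t)]$, where $\theta(t)$ is at least a critical point of $\GG(\a(t),\db,\xi(t),\cdot)$.

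Next I would define $T^*\in(0,T)$ as the first time at which the points $a_j(t)$ collide (or reach the boundary of the configuration space $\mathscr{A}^n$). On $[0,T^*)$ the preservation of the degrees $\db$ across time is a topological consequence of the continuity of vorticities in a suitable negative Sobolev space, and the constraint $\xi(t)\in\mathcal{L}(\a(t),\db)$ is stable under limits. To obtain $H^1$-regularity in time for both $\a$ and $\xi$, I would use the velocity lower bound from Sandier--Serfaty of the form
\[
 \frac{1}{\abs{\log\eps}}\int_s^t \int_M \abs{\partial_r u_\eps}_g^2\, \Vg\, \d r \;\geq\; \pi\int_s^t \abs{\a'(r)}_g^2\, \d r - \mathrm{o}(1),
\]
together with the analogous control on~$\xi'$ coming from the fact that once $\a(t)$ and $\db$ are fixed, $\xi(t)$ is constrained to the affine subspace $\mathcal{L}(\a(t),\db)$, whose dependence on~$\a$ is smooth.

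The heart of the proof is the dual lower bound on the potential part of the dissipation. Testing~\eqref{eq:GL_intro} against $\partial_t u_\eps$ and using Cauchy--Schwarz, the dissipation inequality can be split via the identity
\[
 -\frac{\d}{\d t} F_\eps^{\extr}(u_\eps(t)) \;\geq\; \frac{\alpha}{\abs{\log\eps}}\int_M \abs{\partial_t u_\eps}_g^2\, \Vg \;+\; (1-\alpha)\abs{\log\eps}\int_M \abs{\partial_t u_\eps/\abs{\log\eps}}_g^2\, \Vg
\]
for~$\alpha\in(0,1)$, and since $\partial_t u_\eps/\abs{\log\eps}$ equals (by the equation) the variational residual $-\Delta_g u_\eps + \Sh^2 u_\eps + \eps^{-2}(\abs{u_\eps}^2-1)u_\eps$, the task reduces to a lower bound of~\eqref{eq:lb_grad_desiderio} type, but now with $W$ and the frozen $\theta(t)$ in place of the infimum~$\widetilde W$. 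Here I would invoke the gradient characterisation proved in Section~\ref{sec:gradient}: at points $(\a,\db,\xi,\theta)$ where $\partial_\theta\GG=0$, the $\a$-gradient of~$W$ admits a representation as a boundary residue of the canonical harmonic field plus an explicit extrinsic contribution involving~$\Sh$ and $e^{i\theta}u^*$. The extrinsic shape-operator term in the equation, $\Sh^2 u_\eps$, is of order~$1$ and hence contributes to the limiting residue exactly by $\partial_\a\GG$, complementing the classical renormalised-energy contribution from~$W^{\intr}$. Combining the lower bound for the residual with the velocity lower bound, integrating on $(s,t)$, and then using the $\Gamma$-liminf of $F_\eps^{\extr} - n\pi\abs{\log\eps}-n\gamma$ (cf.~\eqref{eq:F-Gamma}) to define $\varphi(t)$, yields the dissipation inequality~\eqref{eq:gradflowW_mainth}.

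The main obstacle is precisely this last step: the absence of smoothness of $\widetilde W$ forces us to work with the auxiliary functional $W(\a,\db,\xi,\theta)$, and the matching of the limit of the variational residual with $\nabla_\a W$ hinges on knowing that $\theta(t)$ is a critical point of $\GG(\a(t),\db,\xi(t),\cdot)$, not merely on the behaviour of the energy. Establishing that the $W^{1,p}$-limit $u_0(t)$ for a.e.~$t$ does have this critical-point structure requires passing to the limit in the Euler--Lagrange system for~$u_\eps(t)$ outside vortex cores, which in turn depends on sufficiently strong local convergence away from $\a(t)$; this is expected to follow from a clearing-out and $\eta$-compactness argument applied slice-by-slice in time, but it is delicate because the subsequence extraction is $t$-dependent. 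A measurable selection of $\theta(t)$ is then needed to make the final statement meaningful.
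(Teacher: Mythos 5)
Your overall architecture is the same as the paper's (slice-in-time analysis of the stationary equation with forcing $f_\eps=-\partial_t u_\eps(t)/\abs{\log\eps}\to 0$ in $L^2$ for a.e.\ $t$, the gradient characterisation of $W$ from Section~\ref{sec:gradient}, the Sandier--Serfaty velocity bound, Helly's theorem for $\varphi$, and the energy--dissipation inequality), but the two points you defer at the end are precisely where the proof has to be closed, and your plan does not close them. First, the critical-point structure of $\theta(t)$: you propose a clearing-out/$\eta$-compactness argument to upgrade local convergence, but that is neither needed nor sufficient by itself to produce the Euler--Lagrange equation for $\theta$. The paper's route is softer and more direct: take the Hodge decomposition $j(u_\eps(t))=\d\theta_\eps(t)+\d^*\beta_\eps(t)+\xi_\eps(t)$, test \eqref{eq:GL_intro} at fixed $t$ against $iu_\eps\psi$ to get the scalar elliptic equation \eqref{eq:thetaeps}, $-\Delta\theta_\eps=(f_\eps-\Sh^2u_\eps,iu_\eps)_g$, use elliptic regularity to get compactness of $\theta_\eps(t)$ in $H^1$ (indeed $C^1$ cluster points, Lemma~\ref{lemma:thetaeps}), and pass to the limit to obtain \eqref{eq:theta0}, which is exactly $\partial_\theta\GG(u^*,\theta)=0$; the strong $W^{1,p}$ and $H^1_{\mathrm{loc}}$ convergence of $u_\eps(t)$ away from the vortices is already supplied by the intrinsic compactness results of \cite{Ag-Reno} under the energy bound, so no slice-wise $\eta$-compactness machinery is required.

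Second, and more seriously, the $t$-dependent extraction: you acknowledge it is "delicate" and that "a measurable selection is needed", but you give no mechanism, and without one the key inequality \eqref{eq:liminf_grad_E} cannot be fed into Fatou's lemma, because Fatou requires the slope lower bound to hold for the liminf along the \emph{fixed}, $t$-independent sequence, while Proposition~\ref{prop:liminfgrad} only gives it along the $t$-dependent subsequence whose limit realises a particular cluster point $\theta$. Different subsequences may converge to different critical points $\theta$, giving different values of $\abs{\nabla_\a W(\a(t),\db,\xi(t),\theta)}$. The paper resolves this by introducing the multivalued map $\Lambda(t)$ of $C^1$-cluster points of the Hodge potentials $\theta_\eps(t)$, proving it is non-empty, compact, contained in a fixed compact set and measurable (Step~\ref{step:Lambda}), and then \emph{selecting} $\theta(t)\in\Lambda(t)$ that minimises $\abs{\nabla_\a W(\a(t),\db,\xi(t),\cdot)}$ as in \eqref{minimaltheta}, via the Kuratowski--Ryll-Nardzewski/Wagner measurable selection theorem; only with this minimising choice is the full-sequence liminf of the slope bounded below by $\frac{1}{2\pi}\sum_j\abs{\nabla_{a_j}W(\a(t),\db,\xi(t),\theta(t))}_g^2$. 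A minor further imprecision: $\mathcal{L}(\a,\db)$ is a discrete subset of $\Harm^1(M)$, not an affine subspace; the $H^1$ regularity of $\xi$ follows instead from the local smooth parametrisation $\xi(t)=\Xi(\a(t))$ of \eqref{Xi}, which is what you implicitly use anyway.
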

Note that~\eqref{eq:gradflowW_mainth} is indeed a weak formulation
of the gradient flow for~$W$, with respect to
the variable~$\a$. Indeed, assume~$\theta$ is absolutely continuous
with respect to time. As
\[
 \partial_\theta W(\a(t), \, \db, \, \xi(t), \, \theta(t)) = \partial_\theta \GG(\a(t), \, \db, \, \xi(t), \, \theta(t)) = 0
\]
for almost any time~$t$, we have
% then the results of Section \ref{sec:gradient} imply that
\begin{equation}
\label{eq:chain}
\begin{split}
-\frac{\d}{\d t}W(\a(t), \, \db, \, \xi(t), \, \theta(t)) 
% &= 
% \left(\nabla_{\a}W^{\intr}(\a(t),\db,\xi(t)) + \nabla_\a \GG(\a(t),\xi(t), \theta(t)), -\frac{\d}{\d t}\a(t)\right)_g\\
&= \left(\nabla_\a W(\a(t), \, \db, \, \xi(t), \, \theta(t)), 
 \, -\frac{\d}{\d t}\a(t)\right)_g\\
& \le \frac{\pi}{2}\abs{\a'(t)}^2_{g} + \frac{1}{2\pi}\abs{\nabla_{\a}W(\a(t), \, \db, \, \xi(t),\,\theta(t))}_g^2,
\end{split}
\end{equation}
(The dependence of~$W$ on~$\xi$ is accounted for in~$\nabla_{\a} W$;
see~\eqref{gradW} for more details.)
Therefore, it turns out that 
\[
\frac{\d}{\d t}\varphi(t) \le \frac{\d}{\d t}W(\a(t), \, \db, \, \xi(t), \, \theta(t))
\]
for almost any $t\in (0, \, T^*)$, and thus
\[
 W(\a(t), \, \db, \, \xi(t), \, \theta(t)) = \varphi(t)
\]
for any $t\in [0,T^*)$. As a result we have that $\a$ is indeed a curve of maximal slope and in particular we obtain the evolution
\begin{equation} 
\label{eq:formal_limit}
\begin{cases}
 \displaystyle\frac{\d}{\d t} \a(t)
% = -\frac{1}{\pi}\nabla_{\a} \left(W^{\intr}(\a(t),\db,\xi(t)) + \GG(\a(t),\db, \xi(t), \theta(t))\right)
 = -\frac{1}{\pi}\nabla_{\a} W(\a(t), \, \db, \, \xi(t), \, \theta(t))
 \qquad &\textrm{for any } t\in (0,T^*),\\[.4cm]
 \xi(t)\in \mathcal{L}(\a(t), \, \db)
  \qquad &\textrm{for any } t\in (0,T^*)\\[.2cm]
 \partial_\theta \GG(\a(t), \, \db, \, \xi(t), \, \theta(t)) = 0
 \qquad &\textrm{for any } t\in (0,T^*)\\[.2cm]
 \a(0) = \a^0, \qquad \xi(0) = \xi^0.
\end{cases}
\end{equation}
The system above has some interesting features as it couples an ODE for the evolution of $\a$-i.e. the gradient flow of $W$-with the nonlinear elliptic equation $\partial_\theta \GG(\a,\db,\xi,\theta)=0$. 
In this sense \eqref{eq:formal_limit} is a {\itshape quasi stationary } system. 
In particular, it seems difficult and challenging to obtain some regularity with respect to time for $\theta$ as no control on $\partial_t \theta$ is available from the equations.

The paper is organized as follows. In the next section, we study in detail the differentiability properties of the 
renormalized energy~$W$ and characterize its gradient with respect to the position of vortices. 
Then, in Section~\ref{sec:dyna_vortex}, we study the asymptotic behavior when $\eps\to 0$ of the solutions of the  Ginzburg-Landau equation \eqref{eq:GL_intro} and prove Theorem \ref{th:main1}.

%%%%%%%%%%%%%%%%%%%%%%
\section{The renormalized energy and its gradient}
\label{sec:gradient}

\subsection{The functional \texorpdfstring{$\GG$}{G} and its gradient}
\label{sect:G}

In this section, we consider the functional~$\GG 
= \GG(\a, \, \db, \, \xi, \, \theta)$ defined 
in~\eqref{eq:extr_en_intro}. Our goal is to compute
the gradient of~$\GG$ with respect to  
the variable~$\a$. From now on, we abuse of notation
and regard~$\GG$ as a functional of a vector field~$u$ and a function~$\theta$
--- that is, we define~$\GG\colon L^2_{\tan}(M)\times H^1(M)\to\R$ as
\begin{equation} \label{G}
 \GG(u, \, \theta) := \frac{1}{2}\int_M\left(
 \abs{\d\theta}^2_g + \abs{\Sh(e^{i\theta}u)}^2_g\right)\Vg,
\end{equation}
Here, $\Sh\colon TM\to TM$ is the shape operator of~$M$, 
as given by~\eqref{eq:shape}. With the notation used in Section~\ref{sec:intro},
we have~$\GG(\a, \, \db, \, \xi, \, \theta)
= \GG(u^*[\a, \, \db, \, \xi], \, \theta)$
where~$u^*[\a, \, \db, \, \xi]$ denotes a suitably chosen canonical harmonic field
for~$(\a, \, \db, \, \xi)$ --- see~\eqref{pointvalue} below.
For any~$u\in L^2_{\tan}(M)$, the functional~$\GG(u, \, \cdot)$
is Fr\'echet-differentiable. Moreover, the definition of~$\GG$ 
immediately implies that
\begin{equation} \label{G-symmetry}
 \GG(e^{i\kappa}u, \, \theta - \kappa) = \GG(u, \, \theta)
\end{equation}
for any $(u, \, \theta)\in L^2_{\tang}(M)\times H^1(M)$
and any constant~$\kappa\in\R$.

We recall some notation from~\cite{JerrardIgnat_full, Ag-Reno}.
We recall the definition of the admissible class
\begin{equation} \label{admissible}
 \mathscr{A}^n := \left\{(\a, \, \db)\in M^n\times\Z^n\colon
 a_j \neq a_k \ \textrm{ for any } j\neq k \ \textrm{ and } \
 \sum_{j=1}^n d_j = \chi(M)\right\}.
\end{equation}
For any~$(\a, \, \db)\in\mathscr{A}^n$, let~$\Psi = \Psi[\a, \, \db]$ 
be the unique $2$-form that satisfies
\begin{equation} \label{Phiad}
 \begin{cases}
  -\Delta \Psi = 2\pi \displaystyle\sum_{k=1}^n d_k\,\delta_{\a_k}
   - \kappa\,\Vg \\
  \displaystyle\int_M \Psi = 0
 \end{cases}
\end{equation}
where~$\kappa$ is the Gauss curvature of~$M$. 
Given~$(\a, \, \db)\in\mathscr{A}^n$ and~$\xi\in\Harm^1(M)$,
a \emph{canonical harmonic field} for~$(\a, \, \db, \, \xi)$
is a vector field~$u^*\in W^{1,1}_{\tan}(M)$ such that
$\abs{u^*(x)} = 1$ for a.e.~$x\in M$ and
\begin{equation} \label{canonicalvf}
 j(u^*) = \d^*\Psi[\a, \, \db] + \xi
\end{equation}
where~$\Psi[\a, \, \db]$ is given by~\eqref{Phiad}.
Such a vector field~$u^*$ may or may not exist;
we call~$\mathcal{L}(\a, \, \db)$ the set of harmonic
$1$-forms~$\xi$ such that a canonical harmonic 
field for~$(\a, \, \db, \, \xi)$ exists.
(The set~$\mathcal{L}(\a, \, \db)$ is non-empty; a characterisation
is given in~\cite[Theorem~2.1]{JerrardIgnat_full}.)
However, if a canonical harmonic field for~$(\a, \, \db, \, \xi)$
exists, then it is unique up to a global rotation:
if~$u^*$, $w^*$ are canonical harmonic vector fields 
for the same set of parameters~$(\a, \, \db, \, \xi)$,
then there exists a constant~$\kappa\in\R$ such that
$w^* = e^{i\kappa} u^*$ %:= (\cos\kappa)\,u^* + i(\sin\kappa)\,u^*
\cite[Theorem~2.1]{JerrardIgnat_full}.

In order to restore uniqueness, we select a reference
canonical harmonic field by working locally in~$\a$
and imposing a renormalization condition.
Let~$\a^0 = (a_1^0, \, \ldots, \, a_n^0)\in M^n$
and~$\db = (d_1, \, \ldots, \, d_n)\in\Z^n$ be fixed,
such that~$(\a^0, \, \db)\in\mathscr{A}^n$.
Let~$\UU\subseteq M^n$ be an open neighbourhood
of~$\a^0$. By assumption, all the points~$a_j^0$ are distinct.
Therefore, by taking~$\mathscr{U}$ small enough 
we may also assume without loss of generality 
that $a_k\neq a_j$ for any $\a = (a_1, \, \ldots, \, a_n)\in\UU$ 
and any indices~$k$, $j$ with $k\neq j$.
Let~$b^0\in M$ be a point such that $b^0\neq a_k$ for any 
$\a = (a_1, \, \ldots, \, a_n)\in\UU$
and any index~$k$. Let~$w^0\in T_{b^0} M$. For any~$\a\in\UU$
and any~$\xi\in\mathcal{L}(\a, \, \db)$,
we define~$u^*[\a, \, \db, \, \xi]$ as the unique canonical harmonic vector
field for~$(\a, \, \db, \, \xi)$ that satisfies
\begin{equation} \label{pointvalue}
 u^*[\a, \, \db, \, \xi] (b^0) = w^0
\end{equation}
The condition~\eqref{pointvalue} makes sense, because
$u^*[\a, \, \db, \, \xi]$ is smooth in~$M\setminus\{a_1, \, \ldots, \, a_n\}$
(see~\cite[Theorem~2.1]{JerrardIgnat_full}).

The constraint~$\xi\in\mathcal{L}(\a, \, \db)$ allows us 
to write~$\xi$ as a function of~$\a$, in the following way.
Let us fix~$\xi^0\in\mathcal{L}(\a^0, \, \db)$.
If we take~$\UU$ small enough, then there exists a 
unique smooth map~$\Xi\colon\UU\to\Harm^1(M)$ such that
\begin{equation} \label{Xi}
 \Xi(\a)\in\LL(\a, \, \db) \quad
 \textrm{for any } \a\in\UU, \qquad 
 \Xi(\a^0) = \xi^0
\end{equation}
(see \cite[Lemma~3.3]{Ag-Reno}).
For ease of notation, we write~$u^*[\a] := u^*[\a, \, \db, \, \Xi(\a)]$.

\begin{prop} \label{prop:Gdiff}
 For any~$\theta\in H^1(M)$, the function
 \[
  \UU\to\R, \qquad \a\mapsto\GG(u^*[\a], \, \theta)
 \]
 is differentiable. Moreover, if~$\a\in\UU$
 and~$\theta\in H^1(M)$ is a critical point of~$\GG(u^*[\a], \, \cdot)$,
 then
 \[
  \nabla_{a_k} \GG(u^*[\a], \, \cdot) = 2\pi d_k \, i \,\nabla\theta(a_k)
 \]
 for any~$k\in\{1, \, \ldots, \, n\}$.
\end{prop}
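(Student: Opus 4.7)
The plan is to proceed in three main steps: first to establish differentiability by differentiating under the integral sign, then to simplify the resulting expression using the Euler--Lagrange equation satisfied by a critical~$\theta$, and finally to evaluate the integral via Green's identity to localize the contribution at~$a_k$.

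For differentiability I will use that the canonical harmonic field $u^*[\a]$ depends smoothly on~$\a$ as a map into $C^\infty_{\mathrm{loc}}(M\setminus\{a_1,\ldots,a_n\})$, which follows from the identity $j(u^*[\a]) = d^*\Psi[\a,\db] + \Xi(\a)$ and the smooth dependence of both summands on~$\a$. Working locally on a disc $V\ni a_k$ carrying a smooth unit reference field $u_0$, I will write $u^*[\a] = e^{i\phi_\a}u_0$ with $\phi_\a$ a multi-valued phase of monodromy $2\pi d_k$ around $a_k$, so that $\partial_{a_k} u^*[\a]\cdot v = i\,(\partial_{a_k}\phi_\a\cdot v)\,u^*[\a]$ for every $v\in T_{a_k}M$. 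In conformal coordinates centred at~$a_k$ we have $\phi_\a(z) = d_k\arg(z-a_k) + \rho_\a(z)$ with $\rho_\a$ jointly smooth in $(\a,z)$, so $\partial_{a_k}\phi_\a\cdot v$ has a $1/r$-singularity at $a_k$ and lies in $L^p_{\mathrm{loc}}$ for every $p<2$. Since $|u^*|=1$ a.e.\ and $\Sh$ is a smooth endomorphism of~$TM$, $\Sh^2 w \in L^\infty(M)$ for $w := e^{i\theta}u^*[\a]$, and dominated convergence yields the differentiability of $\a\mapsto\GG(u^*[\a], \theta)$ for any $\theta\in H^1(M)$, with
\[
 \partial_{a_k}\GG(u^*[\a], \theta)\cdot v = \int_M (\Sh^2 w,\, \partial_{a_k} w\cdot v)_g\,\Vg = \int_M (\partial_{a_k}\phi_\a\cdot v)\,(\Sh^2 w, iw)_g\,\Vg.
\]

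To handle a critical $\theta$ I will invoke the Euler--Lagrange equation $(\Sh^2 w, iw)_g = \Delta_g\theta$, obtained by differentiating $\GG$ in~$\theta$ and using $\int(d\theta, d\eta)_g = -\int\eta\,\Delta_g\theta\,\Vg$; elliptic regularity makes $\nabla\theta(a_k)$ well defined since the right-hand side is bounded. A crucial structural observation is that $\Delta_g\phi_\a$ does \emph{not} depend on~$\a$: from $d\phi_\a = j(u^*[\a]) - j(u_0) = d^*\Psi[\a,\db] + \Xi(\a) - j(u_0)$, applying $d^*$ and using $d^*\circ d^* = 0$ together with the co-closedness of the harmonic form $\Xi(\a)$ yields $\Delta_g\phi_\a = d^* j(u_0)$, an $\a$-independent smooth function on $V\setminus\{a_k\}$. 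Hence, writing $f := \partial_{a_k}\phi_\a\cdot v$, we have $\Delta_g f = 0$ on $M\setminus\{a_k\}$, and Green's second identity on $M\setminus B_\rho(a_k)$ gives
\[
 \int_{M\setminus B_\rho(a_k)} f\,\Delta_g\theta\,\Vg = -\int_{\partial B_\rho(a_k)} \bigl(f\,\partial_{\hat\nu}\theta - \theta\,\partial_{\hat\nu}f\bigr)\,d\sigma,
\]
with $\hat\nu$ the outer normal to $B_\rho(a_k)$.

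The final step is to compute the boundary integral as $\rho\to 0$. Using normal coordinates at $a_k$ and the expansions $f = d_k(v_x\sin\alpha - v_y\cos\alpha)/r + O(1)$ and $\theta = \theta(a_k) + r\,\nabla\theta(a_k)\cdot\hat r + O(r^2)$, a direct polar computation yields
\[
 \int_{\partial B_\rho(a_k)} f\,\partial_{\hat\nu}\theta\,d\sigma \longrightarrow -\pi d_k\,(v, i\nabla\theta(a_k))_g, \quad \int_{\partial B_\rho(a_k)} \theta\,\partial_{\hat\nu}f\,d\sigma \longrightarrow \pi d_k\,(v, i\nabla\theta(a_k))_g,
\]
so the right-hand side of the previous display tends to $2\pi d_k\,(v, i\nabla\theta(a_k))_g$. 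Combining the three steps gives $\partial_{a_k}\GG(u^*[\a], \theta)\cdot v = \bigl(v,\, 2\pi d_k\,i\,\nabla\theta(a_k)\bigr)_g$, which is exactly the claimed identity. The hard part will be the careful treatment of the $1/r$-singularity of $\partial_{a_k}u^*[\a]$: justifying differentiation under the integral despite this singularity, checking that the smooth remainder $O(1)$ in the expansion of $\partial_{a_k}\phi_\a$ and the curvature corrections to the metric contribute only vanishing boundary terms, and recovering exactly the constant $2\pi d_k$ from the two boundary integrals.
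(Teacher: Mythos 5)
Your strategy is essentially the paper's: differentiate $\a\mapsto\GG(u^*[\a],\theta)$ using that $\D_{a_k}u^*[\a][v]=i\,h\,u^*[\a]$ with $h$ an $L^p$ ($p<2$) function carrying a $1/r$ singularity at $a_k$ (your $\partial_{a_k}\phi_\a\cdot v$ is the paper's $2\pi d_k f(\cdot,a_k,v)$), rewrite the resulting integrand as $f\,\Delta\theta$ via the Euler--Lagrange equation for a critical $\theta$, and evaluate $\int_M f\,\Delta\theta$ by Green's identity on $M\setminus B_\rho(a_k)$ using the harmonicity of $f$ and its local expansion; your remark that $\Delta_g\phi_\a$ does not depend on $\a$ is precisely the paper's argument that $f$ is harmonic. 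Two caveats. First, the step you take as given --- differentiable dependence of $u^*[\a]$ on $\a$, including joint smoothness of the remainder $\rho_\a$, the smooth dependence of the harmonic component $\Xi(\a)$, and above all the additive normalization fixed by $u^*[\a](b^0)=w^0$ --- is where the paper invests most of its effort (Lemmas~\ref{lemma:j}, \ref{lemma:f}, \ref{lemma:h}, \ref{lemma:uniquenessj} and~\ref{lemma:u*}, proved via a duality argument and a uniqueness lemma for unit fields with prescribed current and point value); your phase-representation route can be made rigorous, but as written it compresses the hardest part of the proof into an assertion, and you should say how the normalization constant of the phase varies with $\a$. Second, in the Green-identity step you do not subtract $\theta(a_k)$, so the constant part of $\theta$ multiplies $\int_{\partial B_\rho(a_k)}\partial_{\hat\nu}f\,\d\H^1$, and this is not controlled by the expansion ``main term $+\,\O(1)$'' alone (an a priori logarithmic term in $f$, which the paper's Lemma~\ref{lemma:f} allows, would produce a nonvanishing flux and an extra term $\propto\theta(a_k)$). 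You must either pair $\partial_{\hat\nu}f$ with $\theta-\theta(a_k)$ as the paper does, or observe that this flux vanishes identically because $f$ is harmonic on all of $M\setminus\{a_k\}$ and $M$ is closed (divergence theorem on $M\setminus B_\rho(a_k)$). With these two points supplied, your computation of the boundary limits and of the constant $2\pi d_k$ agrees with the paper's conclusion.
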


The aim of this section is to prove Proposition~\ref{prop:Gdiff}.

\begin{lemma} \label{lemma:j}
 The map~$\a\mapsto j(u^*[\a])$ is continuous as 
 a map~$\UU\to L^p(M, \, T^*M)$ and differentiable
 as a map~$\UU\to W^{-1,p}(M, \, T^*M)$, for any~$p\in [1, \, 2)$.
\end{lemma}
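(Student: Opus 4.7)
The plan is to exploit the decomposition $j(u^*[\a]) = \d^*\Psi[\a,\db] + \Xi(\a)$ coming from the defining equation~\eqref{canonicalvf} of the canonical harmonic field. Since $\Xi\colon\UU\to\Harm^1(M)$ is already known to be smooth by~\cite[Lemma~3.3]{Ag-Reno}, and since harmonic $1$-forms on the closed surface~$M$ are smooth, the $\Xi(\a)$ piece is in fact a $C^\infty$ map into $C^\infty(M, T^*M)$ and hence into every $L^p$ and $W^{-1,p}$ space. The problem therefore reduces to studying the regularity in~$\a$ of the $2$-form $\Psi[\a,\db]$ defined by~\eqref{Phiad}, which in turn reduces to the regularity in~$a_k$ of each Dirac mass $\delta_{a_k}$ appearing on the right-hand side.

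For the continuity statement, fix~$p\in[1,2)$ and let $p'=p/(p-1)\in(2,\infty]$ be its Hölder conjugate. The Sobolev embedding $W^{1,p'}(M)\hookrightarrow C^{0,\alpha}(M)$ on the closed $2$-surface~$M$ yields the estimate
\[
 \abs{\dual{\delta_{a_n}-\delta_a}{\phi}} = \abs{\phi(a_n)-\phi(a)} \le C\,\dist(a_n,a)^\alpha\,\norm{\phi}_{W^{1,p'}(M)},
\]
so that $a\mapsto\delta_a$ is continuous from~$M$ into $W^{-1,p}(M)$. The standard elliptic regularity theory for $-\Delta$ on~$M$, with the zero-mean normalisation built into~\eqref{Phiad} fixing the kernel, then promotes this to continuity of $\a\mapsto\Psi[\a,\db]$ into $W^{1,p}(M)$; applying the smooth first-order operator $\d^*$ gives continuity of $\d^*\Psi[\a,\db]$ into $L^p(M, T^*M)$, and the lemma's continuity claim follows by adding the smooth contribution of~$\Xi$.

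For the differentiability statement I would argue analogously but one derivative lower. Working in local coordinates around each~$a_k^0$ and using the stronger embedding $W^{2,p'}(M)\hookrightarrow C^{1,\alpha}(M)$ for~$p'>2$, the Taylor bound
\[
 \abs{\phi(a+v)-\phi(a)-\dual{\nabla\phi(a)}{v}} \le C\,\abs{v}^{1+\alpha}\,\norm{\phi}_{W^{2,p'}(M)}
\]
shows that $a\mapsto\delta_a$ is Fréchet-differentiable from~$M$ into $W^{-2,p}(M)$, with differential at~$a$ acting on~$v\in T_a M$ as the distribution $\phi\mapsto -\dual{\nabla\phi(a)}{v}$. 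Elliptic regularity, now viewed as an isomorphism $-\Delta\colon L^p(M)\to W^{-2,p}(M)$ between the mean-zero subspaces, upgrades this to Fréchet differentiability of $\a\mapsto\Psi[\a,\db]$ into $L^p(M)$, and a further application of $\d^*$ furnishes the required differentiability into $W^{-1,p}(M, T^*M)$.

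The only delicate point is the uniform $o(\abs{\a-\a_0})$ control in the Fréchet expansion of $\a\mapsto\delta_{a_k}$; this is the main obstacle, but it is supplied directly by the $C^{1,\alpha}$-bound on test functions together with Taylor's theorem, performed chart by chart on~$M$. Everything else is a routine composition of elliptic estimates with the smoothness of~$\Xi$.
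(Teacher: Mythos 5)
Your proposal is correct and follows essentially the same route as the paper: both reduce the lemma via $j(u^*[\a]) = \d^*\Psi[\a,\db] + \Xi(\a)$ to the regularity of $\a\mapsto\sum_k d_k\delta_{a_k}$ (the paper phrases this as continuity into $(C^\alpha(M))'$ and differentiability into $(C^{1,\alpha}(M))'$ followed by Sobolev embedding, you argue directly into $W^{-1,p}$ and $W^{-2,p}$ with the same embeddings $W^{1,p'}\hookrightarrow C^{0,\alpha}$, $W^{2,p'}\hookrightarrow C^{1,\alpha}$), then upgrade through elliptic regularity for $\Psi$, apply $\d^*$, and use the smoothness of~$\Xi$. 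The only blemish is a sign in your formula for the differential of $a\mapsto\delta_a$ (it should act as $\phi\mapsto+\dual{\nabla\phi(a)}{v}$), which is immaterial since the lemma asserts only differentiability, not the value of the derivative.
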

\begin{proof}
 The map 
 \[
  F\colon\a\in\UU\mapsto 2\pi \sum_{k=1}^n d_k \, \delta_{a_k}
 \]
 is continuous as a map~$\UU\to (C^\alpha(M))^\prime$
 and differentiable as a map~$\UU\to (C^{1,\alpha}(M))^\prime$,
 for any~$\alpha\in (0, \, 1)$. By Sobolev embedding, it follows 
 that~$F$ is continuous as a map~$\UU \to W^{-1,p}(M)$
 and differentiable as a map~$\UU\to W^{-2,p}(M)$.
 Elliptic regularity theory (see, e.g.,\cite{gilb_trudi}) implies that
 the map~$\a\mapsto\Psi[\a, \, \db]$, defined as in~\eqref{Phiad},
 is continuous as a map~$\UU\to W^{1,p}(M)$ 
 and differentiable as a map~$\UU\to L^p(M)$. As~$\Xi$ is a smooth map
 (with values in the finite-dimensional space~$\Harm^1(M)$),
 the lemma follows.
\end{proof}

We can express the differential of~$j(u^*[\cdot])$ is a convenient way.
We denote as~$\D_{a_k} j(u^*[\a])$ the differential of the 
map~$j(u^*[\cdot])$ with respect to the variable~$a_k$ only, 
evaluated at the point~$\a$. 
By definition, $\D_{a_k} j(u^*[\a])$ is a linear operator
$T_{a_k} M\to L^p(M, \, T^*M)$.
We denote as~$\nu$ the unit vector field that is orthogonal
to all geodesic circles centered at~$a_k$ and points outward.
$\nu$ is well-defined and smooth in a neighbourhood of~$a_k$,
except at the point~$a_k$ itself.

\begin{lemma} \label{lemma:f}
 For any~$k\in\{1, \, \ldots, \, n\}$, $\a\in\UU$
 and~$v\in T_{a_k} M$, there exists a unique smooth function
 $f(\cdot, \, a_k, \, v)\colon M\setminus\{a_k\}\to\R$
 that belongs to~$L^p(M)$ for any~$p\in [1, \, 2)$ and satisfies
 \begin{align}
  \D_{a_k} j(u^*[\a]) [v] &= 
   2\pi d_k \, \d f(\cdot, \, a_k, \, v)  \label{nablaj} \\
  f(b^0, \, a_k, \, v) &= 0 \label{fzero}
 \end{align}
 (the point~$b^0\in M$ is the same as in~\eqref{pointvalue}).
 The function~$f(\cdot, \, a_k, \, v)$ is harmonic in~$M\setminus\{a_k\}$.
 Moreover, if~$w$ is a smooth vector field, defined in a neighbourhood
 of~$a_k$, such that~$w(a_k) = v$, then
 \begin{align} 
  f(\cdot, \, a_k, \, v) &= -\frac{1}{2\pi\dist_g(\cdot, \, a_k)} 
   (i\nu, \, w)_g + \O(\abs{\log\dist_g(\cdot, \, a_k)}) \label{localf} \\
  \d f(\cdot, \, a_k, \, v) &= \frac{1}{2\pi\dist_g(\cdot, \, a_k)}
   \left((i\nu, \, w)\,\nu^\flat
   + (\nu, \, w) \star\nu^\flat \right) + \O(\dist_g(\cdot, \, a_k)^{-1}) 
   \label{localdf}
 \end{align}
 as~$x\to a_k$.
\end{lemma}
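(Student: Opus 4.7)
The plan is to build $f$ from the derivative of the phase of $u^*[\a]$ with respect to $a_k$, and then verify the asymptotics by comparison with the local model of a canonical harmonic field near its vortex. Away from $\{a_1,\ldots,a_n\}$, the canonical harmonic vector field $u^*[\a]$ is smooth jointly in $x\in M$ and $\a\in\UU$; this follows from the representation $j(u^*[\a]) = \d^*\Psi[\a,\db] + \Xi(\a)$, Lemma~\ref{lemma:j}, elliptic regularity for $\Psi$, and the normalisation~\eqref{pointvalue}. Since $|u^*|\equiv 1$, the derivative $\D_{a_k}u^*[\a][v]$ is pointwise parallel to $iu^*$, so one may define
\[
 \alpha := (\D_{a_k}u^*[\a][v],\, iu^*[\a])_g \qquad \textrm{on } M\setminus\{a_1,\ldots,a_n\}.
\]
Using $(Du^*, u^*)_g = (D(iu^*), iu^*)_g = 0$ (consequences of $|u^*|=1$) and the commutation of $D$ with $\D_{a_k}$, a short computation gives $\D_{a_k}j(u^*[\a])[v] = \d\alpha$ on $M\setminus\{a_1,\ldots,a_n\}$. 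Setting $f := \alpha/(2\pi d_k)$ shifted by a constant so that $f(b^0)=0$ yields~\eqref{nablaj} and~\eqref{fzero}; uniqueness follows from connectedness of $M\setminus\{a_k\}$.

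Next, I would show that $\alpha$ (hence $f$) extends smoothly across each remaining vortex $a_j$ with $j\neq k$, so that the identities extend to all of $M\setminus\{a_k\}$. In a small disk centred at such an $a_j$, one has a factorisation $u^*[\a](x) = e^{i d_j\varphi_j(x) + i h_j(x;\a)}$, where $\varphi_j$ is an angular coordinate around the fixed point $a_j$ and $h_j$ is smooth jointly in $(x,\a)$ (the non-angular part of the phase is a bounded harmonic function on the punctured disk and hence extends smoothly to $a_j$). Because $a_j$ is independent of $a_k$, one has $\D_{a_k}u^*[v] = i(\D_{a_k}h_j[v])u^*$, so $\alpha = \D_{a_k}h_j[v]$ near $a_j$, which is smooth. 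Harmonicity of $f$ on $M\setminus\{a_k\}$ then follows from the harmonicity of the (local) phase $\theta^*$ of $u^*$ --- indeed $\Delta\theta^* = \d^* j(u^*) = \d^*(\d^*\Psi + \xi) = 0$ --- together with the observation that derivatives of harmonic functions with respect to parameters remain harmonic; single-valuedness of $\D_{a_k}\theta^*[v]$ is automatic since the monodromies of $\theta^*$ are integer multiples of $2\pi$ and hence locally constant in $\a$.

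For the local asymptotics near $a_k$, I would work in geodesic normal coordinates centred at $a_k$, where $u^*[\a](x) = e^{id_k\arg(x - a_k) + i h_k(x;\a)}$ with $h_k$ smooth up to logarithmic corrections inherited from the Green's-function representation of $\Psi[\a,\db]$. Differentiating the phase in $a_k$ and using the Euclidean identity $\partial_v\arg(y) = (v, i\nu)_g/|y|$ gives
\[
 \D_{a_k}\theta^*[v](x) = -\frac{d_k (v, i\nu)_g}{\dist_g(x, a_k)} + \O(|\log\dist_g(x,a_k)|).
\]
Dividing by $2\pi d_k$, using the symmetry of the inner product and that $(i\nu, w) - (i\nu, v) = \O(\dist_g(x,a_k))$, this yields~\eqref{localf}. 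The expansion~\eqref{localdf} then follows by exterior differentiation of the leading singular term; the algebra is streamlined by using $\d r = \nu^\flat$ and the orthonormal-frame decomposition $v^\flat = (\nu, v)_g \nu^\flat + (i\nu, v)_g \star\nu^\flat$ in the frame $(\nu, i\nu)$. The $L^p$-membership for $p\in[1,2)$ follows from $f = \O(\dist_g(\cdot,a_k)^{-1})$ and the two-dimensionality of $M$.

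The main technical obstacle I anticipate is the careful justification that $\alpha$ extends smoothly across the other vortices $a_j$ with $j\neq k$. The heuristic via the factorisation $u^* = e^{id_j\varphi_j + ih_j}$ is transparent, but establishing the smooth joint dependence of $h_j$ on $(x,\a)$ rigorously requires revisiting the explicit construction of $u^*$ through the Green's function for $\Psi[\a,\db]$ and the smooth dependence of the harmonic correction $\Xi(\a)$ on $\a$ (given by \cite[Lemma~3.3]{Ag-Reno}), together with a removable-singularity argument for bounded harmonic functions.
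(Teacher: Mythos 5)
Your construction of $f$ as the normalised phase derivative $\alpha=(\D_{a_k}u^*[\a][v],\,iu^*[\a])_g$ is a genuinely different route from the paper (which never touches $\D_{a_k}u^*$ at this stage: it differentiates the explicit representation $j(u^*[\a])=-2\pi\sum_l d_l\star\d G(\cdot,a_l)+\star\d\int_M\kappa\,G+\Xi(\a)$ and imports the identity $\D_{a_k}\Xi(\a)[v]=2\pi d_k\star\d\sigma+2\pi d_k\,\d f$ together with the estimate \eqref{localdf} from the companion paper), and the pointwise computation $\D_{a_k}j(u^*)[v]=\d\alpha$ away from the vortices, as well as the harmonicity and the extension across the $a_j$, $j\neq k$, are fine in spirit. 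But there are two genuine gaps. First, the asymptotics, which are the quantitative heart of the lemma, are not proved: you claim \eqref{localdf} ``follows by exterior differentiation of the leading singular term'' of \eqref{localf}, which is not a valid step --- an $\O(\abs{\log\dist})$ remainder in \eqref{localf} gives no control whatsoever on its differential, so \eqref{localdf} cannot be deduced from \eqref{localf}. The logical order must be the reverse (as in the paper): one first estimates $\d f$, which amounts to estimating $\nabla_x\nabla_a G(x,a)$ near the diagonal (the regular part of the Green function on a surface has logarithmically divergent mixed second derivatives --- this is exactly the content of the estimates for $\sigma(x,a,v)=(\nabla_a G(x,a),v)$ cited from the earlier paper), and only then obtains \eqref{localf} by integrating $\d f$ along paths. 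Your phrase ``$h_k$ smooth up to logarithmic corrections inherited from the Green's-function representation'' is precisely the statement that needs proof, and with your construction of $f$ you have no explicit formula from which to extract it without reverting to the Green-function decomposition anyway.

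Second, identifying your pointwise object with $\D_{a_k}j(u^*[\a])[v]$ in the sense in which it is defined and later used is not automatic. By Lemma~\ref{lemma:j} the map $\a\mapsto j(u^*[\a])$ is differentiable only as a $W^{-1,p}$-valued map, and that functional derivative is what enters the proof of Lemma~\ref{lemma:u*}. Locally uniform convergence of the difference quotients on compact subsets of $M\setminus\{a_1,\ldots,a_n\}$ does not identify their limit in $W^{-1,p}(M)$, because near the \emph{moving} vortex $a_k$ the quotients are of size $\dist^{-1}/t$ and are not dominated; a priori the limit could differ from $2\pi d_k\,\d f$ by a measure concentrated at $a_k$. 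The paper avoids this by differentiating an explicit decomposition in which the only $a_k$-dependent singular piece is $-2\pi d_k\star\d G(\cdot,a_k)$, whose derivative is computed directly. Relatedly, the smooth joint dependence of $u^*[\a](x)$ on $(x,\a)$ away from the vortices, on which your whole construction rests, is asserted rather than proved; it is provable independently (reconstruct $u^*$ from $j(u^*[\a])$ and the normalisation \eqref{pointvalue} by the Cauchy problem of Lemma~\ref{lemma:uniquenessj} and use smooth dependence of ODE solutions on parameters), but note that in the paper the differentiability of $\a\mapsto u^*[\a]$ is the content of the later Lemma~\ref{lemma:u*}, whose proof \emph{uses} Lemma~\ref{lemma:f}; so this step must be supplied with care to avoid making the argument circular in substance.
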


Before we give the proof of Lemma~\ref{lemma:f}, we
introduce some notation. Let 
$G\colon \{(x, \, y)\in M\times M\colon x\neq y\}\to\R$
be the Green function for the Laplace-Beltrami operator~$-\Delta$ on~$M$, 
defined as
\begin{equation} \label{Green}
 \begin{cases} 
  -\Delta G(\cdot, \, y) = \delta_y - \dfrac{1}{\mathrm{Vol}(M)} 
    & \textrm{in } \mathscr{D}^\prime(M) \\[7pt]
  \displaystyle\int_M G(x, \, y) \Vg(x) = 0
 \end{cases}
\end{equation}
for any~$y\in M$. (Here~$\mathrm{Vol}(M)$ denotes the
area of~$M$, that is~$\mathrm{Vol}(M) := \int_M\Vg$.)
For any~$a\in M$ and any~$v\in T_a M$, we define the function
$\sigma(\cdot, \, a, \, v)\colon M\setminus\{a\}\to\R$ as
\begin{equation} \label{sigma_av}
  \sigma(x, \, a, \, v) := \left(\nabla_a G(x, \, a), \, v\right)
  \qquad \textrm{for } x\in M\setminus\{a\}
\end{equation}
Here~$\nabla_a G$ denotes the gradient of the Green function
with respect to its second argument. The function
$\sigma(\cdot, \, a, \, v)$ is harmonic, and hence smooth, 
in~$M\setminus\{a\}$ and it belongs to
$L^p(M)$ for any~$p\in [1, \, 2)$
(see e.g. Equation~(3.3) and Lemma~3.10 in~\cite{Ag-Reno}). 

\begin{proof}[Proof of Lemma~\ref{lemma:f}]
 We can write Equation~\eqref{canonicalvf} in terms of the Green function~$G$, as
 \[
  j(u^*[\a]) = -2\pi\sum_{k=1}^n d_k \star \d G(\cdot, \, a_k) 
  + \star\d\int_{M} \kappa(y) G(\cdot, \, y) \, \Vg(y) + \Xi(\a) 
 \]
 We take~$k\in\{1, \, \ldots, \, n\}$, $\a\in\UU$ and~$v\in T_{a_k} M$.
 By differentiating both sides of this equation with respect to~$a_k$,
 and recalling~\eqref{sigma_av}, we obtain
 \begin{equation} \label{nablaj-sigma}
  \D_{a_k} j(u^*[\a])[v]
  = -2\pi d_k \, \star\d \sigma(\cdot, \, a_k, \, v)
  + \D_{a_k}\Xi(\a)[g]
 \end{equation}
 In~\cite[Lemma~3.11, Step~1]{Ag-Reno} (see in particular Equation~(3.60)),
 we proved that there exists a smooth function
 $f(\cdot, \, a_k, \, v)\colon M\setminus\{a_k\}\to\R$ such 
 that\footnote{Strictly speaking, the arguments
 of~\cite[Lemma~3.11, Step~1]{Ag-Reno} only apply when~$M$
 is \emph{not} simply connected. However, similar arguments apply when~$M$ 
 is simply connected: since~$\sigma(\cdot, \, a_k, \, v)$
 is harmonic in~$M\setminus\{a_k\}$, it follows that 
 $\star\d\sigma(\cdot, \, a_k, \, v)$
 is smooth and closed in~$M\setminus\{a_k\}$. If~$M$ is a simply connected
 closed surface, then it is diffeomorphic to the sphere, so~$M\setminus\{a_k\}$
 is simply connected. Therefore, there exists a smooth 
 function~$f(\cdot, \, a_k, \, v)\colon M\setminus\{a_k\}\to\R$
 that satisfies~\eqref{nablaXi-sigma}, with~$\Xi=0$.}
 \begin{equation} \label{nablaXi-sigma}
  \D_{a_k}\Xi(\a)[v]
  = 2\pi d_k \, \star\d\sigma(\cdot, \, a_k, \, v)
  + 2\pi d_k \, \d f(\cdot, \, a_k, \, v)
 \end{equation}
 The function~$f(\cdot, \, a_k, \, v)$ is uniquely identified
 by~\eqref{nablaXi-sigma}, up to an additive constant. 
 We select a unique~$f(\cdot, \, a_k, \, v)$
 by imposing the constraint~\eqref{fzero}. Equation~\eqref{nablaj}
 now follows from~\eqref{nablaj-sigma} and~\eqref{nablaXi-sigma}.
 By taking the codifferential in both sides of~\eqref{nablaXi-sigma},
 we obtain
 \[
  -2\pi d_k \, \Delta f(\cdot, \, a_k, \, v) 
  = 2\pi d_k \, \d^*\d f(\cdot, \, a_k, \, v)
  = \d^*\left(\D_{a_k}\Xi(\a)[v]\right) = 0 
  \qquad \textrm{in } M\setminus\{a_j\},
 \]
 that is,~$f(\cdot, \, a_k, \, v)$ is harmonic in~$M\setminus\{a_k\}$.
 
 The proof of~\eqref{localdf} was given already
 in~\cite{Ag-Reno} (see Lemma~3.10 and Equation~(3.69)). 
 Therefore, it only remains to prove~\eqref{localf}
 (which, in particular, implies~$f(\cdot, \, a_k, \, v)\in L^p(M)$
 for any~$p<2$). 
 For simplicity of notation, we write~$f$ instead of~$f(\cdot, \, a_k, \, v)$.
 We work in normal geodesic coordinates centred at~$a_k$
 and we identify a geodesic ball $B_\delta(a_k)\subseteq M$
 with a Euclidean ball~$B_\delta(0)\subseteq\R^2$.
 (In particular, $a_k\in M$ is identified with the origin~$0\in\R^2$.)
 By Gauss' lemma, geodesic circles (centred at~$a_k$) in~$M$
 are mapped to Euclidean circles centered at the origin,
 geodesic rays from~$a_k$ are mapped to Euclidean rays from the origin,
 and~$\dist_g(x, \, a_k) = \abs{x}$
 for any~$x\in B_\delta(a_k)\simeq B_\delta(0)$.
 If~$w$, $\tilde{w}$ are 
 two smooth vector fields on~$B_\delta(0)$ such that
 $w(0) = \tilde{w}(0) = v$, then
 \[
  \frac{1}{2\pi\abs{x}} (i\nu(x), \, \tilde{w}(x))_g = 
  \frac{1}{2\pi\abs{x}} (i\nu(x), \, w(x))_g + \O(1)
 \]
 As a consequence, it suffices to prove~\eqref{localf}
 in case~$w$ is represented in coordinates by a constant map,
 $w(x) = v = (v^1, \, v^2)$ for any~$x\in B_\delta(0)$.
 
 We fix a reference point~$x_0 := (\rho_0, \, 0)\in B_\delta(0)$.
 For any~$x = (\rho\cos\phi, \, \rho\sin\phi)$ with~$0 < \rho < \rho_0$
 and~$0 \leq \phi < 2\pi$, let~$\gamma_{x,1}$ be the straght line segment
 of endpoints~$x_0$ and~$(\rho, \, 0)$, oriented from~$x_0$
 to~$(\rho, \, 0)$. Let~$\gamma_{x,2}$ be the circular arc,
 centred at the origin, from~$(\rho, \, 0)$ to~$x$,
 oriented in the anticlockwise direction. Recalling that
 the metric tensor in geodesic coordinates
 satisfies $g_{ij}(x) = \delta_{ij} + \O(\abs{x}^2)$,
 we obtain
 \[
  \begin{split}
   f(x) &= \int_{\gamma_{x,1}} \d f
    + \int_{\gamma_{x,2}} \d f + f(x_0) \\
   &\hspace{-.1cm}\stackrel{\eqref{localdf}}{=}
    - \int_\rho^{\rho_0} \left(\frac{v^2}{2\pi r^2} + \O(r^{-1})\right) \d r
    + \frac{1}{2\pi\rho}\int_0^{\phi} 
    \left(v^1 \cos t + v^2 \sin t\right) \d t + \O(1) + f(x_0) \\
   &= \frac{1}{2\pi\rho} \left(v^1 \sin\phi - v^2 \sin\phi\right) 
    + \O(\abs{\log\rho})
   = \frac{1}{2\pi\abs{x}} (i\nu(x), \, v)_g 
    + \O(\abs{\log\abs{x}}),
  \end{split}
 \]
 which completes the proof.
\end{proof}

\begin{lemma} \label{lemma:h}
 Let~$p_1$, \ldots, $p_n$ be distinct points in~$M$. 
 Let~$q>2$, $h\in L^q(M)$ such that $\int_M h \,\Vg = 0$.
 Then, there exists a
 $1$-form~$\tau\in W^{1,q}(M, \, T^*M)$ such that
 \begin{gather}
   \d^*\tau = h \label{tauh-l}\\
   \tau(p_k) = 0 \quad \textrm{for any } k\in\{1, \ldots, \, n\}
    \label{tauzero-l} \\
   \norm{\tau}_{W^{1,q}(M)} \leq C \norm{h}_{L^q(M)}\label{taunorm-l}
  \end{gather}
  for some constant~$C$ that depends only on~$M$, $q$ and the points~$p_k$.
\end{lemma}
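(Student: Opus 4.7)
The plan is to write $\tau = \tau_0 - \sigma$ where $\tau_0$ satisfies $\d^*\tau_0 = h$ and $\sigma$ is a coclosed correction arranged so that $\sigma(p_k) = \tau_0(p_k)$ for each $k$. For $\tau_0$, I would solve the Poisson problem $\d^*\d u = h$ on the closed surface $M$, which is solvable since $\int_M h\,\Vg = 0$; standard elliptic regularity on a closed manifold yields $u \in W^{2,q}(M)$ with $\|u\|_{W^{2,q}} \leq C\|h\|_{L^q}$. Setting $\tau_0 := \d u \in W^{1,q}(M, T^*M)$ then gives $\d^*\tau_0 = h$, and since $q > 2$ on a $2$-manifold the Sobolev embedding $W^{2,q}(M) \hookrightarrow C^1(M)$ makes $\tau_0$ continuous, so the values $\tau_0(p_k)$ make pointwise sense with $|\tau_0(p_k)|_g \leq C\|h\|_{L^q}$.

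To construct $\sigma$, I would exploit the fact that on an oriented Riemannian surface every $1$-form of the shape $\star\d v$ is coclosed (since $\d^*\star\d v = \pm\star\d^2 v = 0$), and that the Hodge star is a pointwise bijection on cotangent spaces. Thus it is enough to produce a function $v \in W^{2,q}(M)$ whose differential realises the prescribed values $\eta_k := \star^{-1}\tau_0(p_k)$ at each $p_k$. The construction is a standard partition of unity: pick disjoint geodesic balls $B(p_k, \rho_0)$, introduce normal coordinates centred at each $p_k$, define the local linear function $v_k$ with $\d v_k(p_k) = \eta_k$, and set $v := \sum_k \chi_k v_k$ with $\chi_k$ smooth cutoffs supported in $B(p_k, \rho_0)$ and equal to $1$ near $p_k$. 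Since $v_k(p_k) = 0$, the Leibniz rule gives $\d(\chi_k v_k)(p_k) = \d v_k(p_k) = \eta_k$, and the $W^{2,q}$ norm of $v$ is bounded linearly by $\sum_k |\eta_k| \leq C\|h\|_{L^q}$ because each $v_k$ is a linear polynomial with coefficients of size $|\eta_k|$.

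Finally I set $\tau := \tau_0 - \sigma$ with $\sigma := \star\d v$, which delivers $\d^*\tau = h$, $\tau(p_k) = 0$ for every $k$, and $\|\tau\|_{W^{1,q}} \leq C\|h\|_{L^q}$. I do not expect a significant obstacle: the whole argument is essentially standard once one notices that coclosed $1$-forms in dimension two are Hodge duals of differentials of functions, which is precisely what permits independent pointwise prescription at finitely many points. The only care needed is in tracking sign conventions for $\star$ and $\d^*$, and in noting that the constant $C$ is allowed to depend on the points $p_k$ (through the choice of cutoffs and the Sobolev embedding norms), which is consistent with the statement of the lemma.
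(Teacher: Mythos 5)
Your proposal is correct and is essentially the paper's own argument: solve $-\Delta\alpha = h$ to get $\d\alpha\in W^{1,q}$ with $\d^*\d\alpha = h$, then subtract coclosed corrections of the form $\star\d(\chi_k v_k)$, where $\chi_k v_k$ is a cutoff times a linear function in normal coordinates chosen so the correction matches $\d\alpha(p_k)$ at each point — exactly the paper's $\varphi_k(x) = -\eta_k(x)\,(i\nabla\alpha(p_k),\,\mathrm{Exp}_{p_k}^{-1}(x))_g$. The estimates (Calder\'on--Zygmund plus the embedding $W^{2,q}\hookrightarrow C^1$ for $q>2$) are also the same, so there is nothing to add.
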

\begin{proof}
 Since~$h$ has zero average by assumption, there exists 
 a unique function~$\alpha\in H^1(M)$ such that
 \[
  -\Delta\alpha = h, \qquad
  \int_M \alpha \, \Vg = 0
 \]
 By the Calderon-Zygmund theory (see \cite{gilb_trudi}),
 $\alpha\in W^{2,q}(M)$ and
 \begin{equation} \label{h1}
  \norm{\alpha}_{W^{2,q}(M)} \lesssim \norm{g}_{L^q(M)}
 \end{equation}
 Let~$B_1$, \ldots, $B_n$ be pairwise disjoint open balls,
 centred at the points~$p_1$, \ldots, $p_n$,
 with radii strictly smaller than the injectivity radius of~$M$.
 For each~$k$, let~$\eta_k\in C^\infty_{\mathrm{c}}(B_k)$
 be a cut-off function, such that~$\eta_k = 1$ in a neighbourhood 
 of~$a_k$. We define the function~$\varphi_k\colon B_k\to\R$ as
 \[
  \varphi_k(x) := -\eta_k(x) 
  (i\nabla\alpha(p_k), \, \mathrm{Exp}_{p_k}^{-1}(x))_g
 \]
 where~$\mathrm{Exp}_{p_k}\colon T_{p_k} M\to M$
 is the exponential map at~$p_k$. We have
 \[
  \star\d\varphi_k(p_k) = (i\nabla\varphi_k(p_k))^\flat
  = (\nabla\alpha(p_k))^\flat = \d\alpha(p_k)
 \]
 Moreover, the Sobolev 
 embedding~$W^{2,q}(M)\hookrightarrow C^1(M)$ implies
 \begin{equation} \label{h2}
  \norm{\d\varphi_k}_{W^{1,q}(B_k)}
  \lesssim \abs{\nabla\alpha(p_k)}
  \lesssim \norm{\alpha}_{W^{2,q}(M)}
  \stackrel{\eqref{h1}}{\lesssim} \norm{h}_{L^q(M)} 
 \end{equation}
 In this inequality, the multiplicative constants
 implied by the notation~$\lesssim$ depend on 
 the cut-off~$\eta_k$ and hence, on the radii of~$B_k$.
 However, the points~$p_k$ are fixed once and for all, 
 and so are the balls~$B_k$. 
 Therefore, the form
 \[
  \tau := \d\alpha - \sum_{k=1}^n \star\d\varphi_k
 \]
 satisfies all the desired properties.
\end{proof}

\begin{lemma} \label{lemma:uniquenessj}
 Let~$U\subseteq M$ be an open connected set. Let~$p\in U$.
 Let~$u$, $v$ be smooth fields on~$U$ such that
 \[
  \abs{u}= \abs{v} =1, \qquad j(u)=j(v) \quad \textrm{in } U,
  \qquad u(p) = v(p)
 \]
 Then, $u = v$ in~$U$.
\end{lemma}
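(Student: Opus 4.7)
The plan is to exploit the fact that $u$ itself provides a smooth global orthonormal frame on~$U$: since $\abs{u}=1$ pointwise and $\dim T_pM = 2$, the pair $\{u, iu\}$ is a smooth orthonormal frame of~$TU$. In this frame one may write $v = f\,u + h\,iu$ for unique smooth functions $f, h\colon U \to \R$ satisfying the pointwise constraint $f^2 + h^2 \equiv 1$. Reducing the proof to an analysis of the pair $(f,h)$ is the natural move, since the assumption $u(p)=v(p)$ translates into $f(p)=1$, $h(p)=0$.

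The key step is the explicit computation of~$j(v)$ in this frame. Using that $i$ is parallel, the identities $(ia, ib)_g = (a, b)_g$ and $(ia, b)_g = -(a, ib)_g$, and the fact $(D_X u, u)_g = \frac{1}{2} X\abs{u}^2 = 0$, a direct expansion of $(D_X v, iv)_g = (D_X v, f\,iu - hu)_g$ collapses after cancellation to the identity
\[
j(v) = j(u) + f\,\d h - h\,\d f.
\]
Under the hypothesis $j(u)=j(v)$ this means $f\,\d h - h\,\d f = 0$ on~$U$. Differentiating the constraint $f^2 + h^2 \equiv 1$ also yields $f\,\d f + h\,\d h = 0$. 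Viewing these two relations as a linear system in the unknowns $(\d f, \d h)$, the coefficient matrix has determinant $f^2 + h^2 = 1 \neq 0$, so one concludes that $\d f$ and $\d h$ vanish identically on~$U$.

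Therefore $f$ and $h$ are locally constant and, by connectedness of~$U$, globally constant. The condition $u(p)=v(p)$ forces $f\equiv 1$ and $h\equiv 0$ on~$U$, i.e., $u=v$ on~$U$. The only genuine computation is the identity $j(v) = j(u) + f\,\d h - h\,\d f$; everything else is elementary. I want to note that the argument avoids any need to define a global lifting angle $\theta$ (which would require $U$ to be simply connected): the $2\times 2$ determinant argument produces $\d f = \d h = 0$ directly.
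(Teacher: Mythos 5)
Your proof is correct, and it takes a genuinely different route from the paper. The paper (following Lemma~6.4 of Ignat--Jerrard) argues along curves: it joins $p$ to an arbitrary point by a smooth path $\gamma$ and observes that both $u\circ\gamma$ and $v\circ\gamma$ solve the same linear Cauchy problem $D_{\gamma'(s)}w = j(\gamma'(s))\, i w$, $w(0)=u(p)$, so uniqueness for ODEs gives $u=v$ along $\gamma$ and hence on $U$. You instead decompose $v = f\,u + h\,iu$ in the orthonormal frame $\{u, iu\}$ (legitimate since $|u|=1$ and $i$ is parallel, so $f=(v,u)_g$ and $h=(v,iu)_g$ are smooth) and your frame computation $j(v) = j(u) + f\,\d h - h\,\d f$ is correct; combined with $f\,\d f + h\,\d h = 0$ from the unit constraint, the nondegenerate $2\times 2$ system forces $\d f = \d h = 0$ (the determinant is $\pm(f^2+h^2)$, nonzero either way), so $f$, $h$ are constant on the connected set $U$ and the condition at $p$ gives $f\equiv 1$, $h\equiv 0$. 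What each approach buys: yours is more elementary and self-contained (no choice of curves, no ODE uniqueness theorem), and it makes transparent the structural fact that two unit fields with the same current differ by a constant rotation in the frame of one of them; the paper's ODE formulation, on the other hand, is the natural device when one also wants to \emph{reconstruct} a unit field from a prescribed current (existence, not just uniqueness), which is how it is used in the broader theory of canonical harmonic fields.
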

\begin{proof}
 The lemma follows from~\cite[Lemma~6.4]{JerrardIgnat_full};
 we reproduce the argument, for convenience of the reader.
 Take an arbitrary point~$x\in U$. As~$U$ is open and connected,
 there exists a smooth curve~$\gamma\colon[0, \, 1]\to M$
 such that $\gamma(0) = p$, $\gamma(1) = x$. Let~$j:= j(u) = j(v)$.
 We consider the following Cauchy problem: 
 find a differentiable map~$w\colon [0, \, 1]\to TM$ such that
 \begin{equation} \label{Cauchy}
  \begin{cases}
   w(s)\in T_{\gamma(s)}M & \textrm{for any } s\in [0, \, 1]\\
   D_{\gamma^\prime(s)}w(s) = j(\gamma^\prime(s)) \, iw(\gamma(s)) &
    \textrm{for any } s\in [0, \, 1] \\
   w(0) = u(p)= v(p)
  \end{cases}
 \end{equation}
 Since~$j$ is smooth, the problem~\eqref{Cauchy} 
 has a unique solution. 
 By differentiating the constraint $\abs{u}^2 = \abs{v}^2 =1$, we deduce that
 $D_{\gamma^\prime(t)} u(\gamma(t))$ is parallel to~$iu(\gamma(t))$
 and $D_{\gamma^\prime(t)} v(\gamma(t))$ is parallel to~$iv(\gamma(t))$. 
 Using the definition of~$j(u)$, $j(v)$, we check that
 both~$t\mapsto u(\gamma(t))$ and~$t\mapsto v(\gamma(t))$
 are solutions to~\eqref{Cauchy}. Therefore, $u(\gamma(t)) = v(\gamma(t))$
 for any~$t\in [0, \, 1]$, and the lemma follows.
\end{proof}

\begin{lemma} \label{lemma:u*}
 For any~$p\in [1, \, 2)$, the map~$\UU\to L^p_{\tan}(M)$, 
 $\a\mapsto u^*[\a]$ is differentiable.
 For any~$k\in\{1, \, \ldots, \, n\}$, any~$\a\in\UU$ and
 any~$v\in T_{a_k}M$, there holds
 \[
  \D_{a_k}(u^*[\a])[v] = 2\pi d_k \, f(\cdot, \, a_k, \, v) \, i u^*[\a],
 \]
 where~$f(\cdot, \, a_k, \, v)$ is the function
 given by Lemma~\ref{lemma:f}.
\end{lemma}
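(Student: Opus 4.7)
The plan is to represent $u^*[\a]$ through the ODE determined by $j(u^*[\a])$ along paths, differentiate this ODE in $\a$, and then establish the $L^p$ convergence by splitting $M$ into a neighbourhood of the moving singularity and its complement. Fix $k\in\{1, \ldots, n\}$, $\a\in\UU$, and $v\in T_{a_k}M$. Let $t\mapsto\a(t)\in\UU$ be a smooth curve with $\a(0)=\a$ whose only nonzero component of $\a'(0)$ is $v$ in the $k$-th slot, and write $u:=u^*[\a]$, $u_t:=u^*[\a(t)]$, and $\dot u:=2\pi d_k\, f(\cdot, a_k, v)\, iu$, which belongs to $L^p_{\tan}(M)$ for every $p\in [1,2)$ by Lemma~\ref{lemma:f}. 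The goal is to show that $\|(u_t-u)/t-\dot u\|_{L^p(M)}\to 0$ as $t\to 0$.

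The heuristic: by Lemma~\ref{lemma:uniquenessj}, on any simply connected open subset $U$ of $M\setminus\{a_k, a_k(t)\}$ the field $u_t$ is determined by $\abs{u_t}=1$, by $j(u_t)$, and by its value at a single point, via the ODE $D_{\gamma'}u_t=j(u_t)(\gamma')\, iu_t$ along paths $\gamma$ in $U$. Differentiating formally in $t$ and invoking Lemma~\ref{lemma:f} gives the linearised ODE
\[
D_{\gamma'}\dot u = 2\pi d_k\,\d f(\gamma')\, iu + j(u)(\gamma')\, i\dot u,
\]
with $\dot u(b^0)=0$. Since $\abs{u_t}^2\equiv 1$ forces $\dot u\perp u$, one substitutes the ansatz $\dot u=\psi\, iu$; using that the almost complex structure is parallel with respect to the Levi-Civita connection, this reduces to $\d\psi=2\pi d_k\,\d f$ along $\gamma$, solved by $\psi=2\pi d_k f$ via \eqref{fzero}.

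To make this rigorous, fix $\delta>0$ small enough that $B_{2\delta}(a_k)$ avoids the other singularities, and take $|t|$ small enough that $a_k(t)\in B_{\delta/2}(a_k)$. On the compact set $K_\delta:=M\setminus B_\delta(a_k)$, the form $j(u^*[\a(t)])=\d^*\Psi[\a(t)]+\Xi(\a(t))$ is smooth in every $C^r(K_\delta)$-norm and depends smoothly on $t$, by elliptic regularity applied to \eqref{Phiad} away from the singularities together with the smoothness of $\Xi$. Standard smooth dependence on parameters for the ODE above then yields
\[
\bigl\|(u_t-u)/t-\dot u\bigr\|_{L^\infty(K_\delta)}\to 0 \qquad \text{as } t\to 0.
\]

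The principal obstacle is the region $B_\delta(a_k)$, where the singularities of $u$ and $u_t$ sit at different points and pointwise convergence fails. Here I would argue directly in $L^p$: using the local description of canonical harmonic fields near a singularity (\cite[Theorem~2.1]{JerrardIgnat_full}), in isothermal coordinates centred at $a_k$ the fields $u$ and $u_t$ have the leading-order expansions $c_k(z-a_k)^{d_k}\abs{z-a_k}^{-d_k}$ and $c_k(z-a_k(t))^{d_k}\abs{z-a_k(t)}^{-d_k}$ (up to a common smooth gauge), yielding the pointwise comparison $\abs{u_t(x)-u(x)}\le C\min(1, \,|t|\,|v|/\dist_g(x,a_k))$ for $\abs{x-a_k}\ge 2|t|\,|v|$, while $\abs{u_t-u}\le 2$ on the complementary tiny disc. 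Integrating over $B_\delta(a_k)$ then gives $\|u_t-u\|_{L^p(B_\delta(a_k))}\lesssim |t|\,\delta^{(2-p)/p}$, and the estimate \eqref{localf} gives $\|f(\cdot,a_k,v)\|_{L^p(B_\delta)}\lesssim \delta^{(2-p)/p}$. Combining, $\|(u_t-u)/t-\dot u\|_{L^p(B_\delta(a_k))}\lesssim \delta^{(2-p)/p}$; sending $t\to 0$ first and then $\delta\to 0$ concludes the proof.
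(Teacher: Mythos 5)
Your route (a directional-derivative computation through the ODE characterisation of Lemma~\ref{lemma:uniquenessj}, plus a direct $L^p$ estimate near the moving vortex) is genuinely different from the paper's, and the pieces that you do compute are right: the linearised ODE does force $\dot u=2\pi d_k f(\cdot,a_k,v)\,iu^*[\a]$ with the normalisation \eqref{fzero}, and the scaling $\|u_t-u\|_{L^p(B_\delta(a_k))}\lesssim |t|\,\delta^{(2-p)/p}$, $\|f\|_{L^p(B_\delta)}\lesssim\delta^{(2-p)/p}$ is exactly what is needed to make the ``first $t\to0$, then $\delta\to0$'' scheme close. However, as written the argument has genuine gaps at the two places where the real work sits.

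First, the region away from $a_k$ is mishandled: $K_\delta=M\setminus B_\delta(a_k)$ still contains the \emph{fixed} singular points $a_j$, $j\neq k$, where $j(u^*[\a(t)])=\d^*\Psi[\a(t)]+\Xi(\a(t))$ is not even bounded, so the claim that this form ``is smooth in every $C^r(K_\delta)$-norm'' is false and the appeal to smooth dependence on parameters for the ODE does not apply there; neighbourhoods of the fixed vortices are never treated. They can be handled, but only by writing $u_t=e^{i\phi_t}u$ with $\d\phi_t=j(u_t)-j(u)$ (smooth across the fixed $a_j$) and controlling $\phi_t$ globally with the normalisation at $b^0$ --- which is precisely the content of the paper's function $\beta_*[\a]$, whose very existence rests on the period computations \eqref{u*1,1}--\eqref{u*1,4}. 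Second, the key near-$a_k$ comparison $|u_t-u|\lesssim\min(1,|t||v|/\dist_g(\cdot,a_k))$ ``up to a common smooth gauge'' is asserted rather than proved: \cite[Theorem~2.1]{JerrardIgnat_full} gives the local form of a canonical harmonic field, but you still need the gauge/regular part for $\a(t)$ and $\a$ to agree up to a uniformly $O(|t|)$ phase on the annulus $\{2|t||v|\le\dist\le\delta\}$, and with the normalisation \eqref{pointvalue} this again amounts to the quantitative bound $|\phi_t|\lesssim|t|/\dist_g(\cdot,a_k)$ for the phase difference, i.e.\ the same missing estimate. (The paper avoids both issues by proving $L^p$-differentiability of the global phase $\beta[\a]$ via the duality argument of Lemma~\ref{lemma:h} and then invoking Lemma~\ref{lemma:uniquenessj} once, through $w_*[\a]=e^{i\beta[\a]}u^*[\a^0]$.) Finally, note that you only compute directional derivatives along curves moving a single vortex, whereas the lemma asserts differentiability of $\a\mapsto u^*[\a]$; to conclude you would need either a remainder estimate uniform when all the points move simultaneously (as in the paper, with the vector $\mathbf{t}$) or continuity in $\a$ of the partial differentials, neither of which is addressed.
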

\begin{proof}
 We fix a reference point~$\a^0 = (a_1^0, \, \ldots, \, a_n^0)\in\UU$.
 We split the proof into several steps.
 
 \setcounter{step}{0}
 \begin{step}
  First, we recall briefly the definition of~$\a\mapsto\Xi(\a)$.
  (We refer to~\cite[Section~5]{JerrardIgnat_full}, 
  \cite[Section~3.2]{Ag-Reno} for more details.)
  Let~$\gf$ denote the genus of~$M$; suppose for the moment that~$\gf>0$.
  We take smooth, closed, simple 
  curves~$\gamma_1$, \ldots, $\gamma_{2\gf}$ in~$M$
  whose homology classes generate the first homology group~$H_1(M; \, \Z)$.
  (Such curves exist; see e.g.~\cite[Lemma~5.2]{JerrardIgnat_full}).
  Up to a perturbation of the curves~$\gamma_h$, we may 
  assume without loss of generality that~$a_k^0$ does not
  belong to the image of~$\gamma_h$, for any~$k$ and~$h$.
  For any~$h$, we choose an orthogonal tangent
  frame~$\{\tau_{1,h}, \, \tau_{2,h}\}$ defined in a neighbourhood 
  of~$\gamma_h$, and consider the connection 
  $1$-form~$\mathcal{A}_k$ induced by~$\{\tau_{1,k}, \, \tau_{2,k}\}$ (see \cite[Section 5.2]{JerrardIgnat_full}.
  Now, let~$\a = (a_1, \, \ldots, \, a_n)$.
  We assume that each~$a_k$ is close enough to~$a_k^0$,
  so that~$a_k$ does not belong to the image of any of 
  the curves~$\gamma_h$, either. The map~$\Xi$ is defined
  in such a way that
  \begin{equation} \label{u*1,1}
   \int_{\gamma_h} \left( \Xi(\a) + \d^*\Phi(\a, \, \db) + \mathcal{A}_h\right) 
   \in 2\pi\Z \qquad \textrm{for any } h\in\{1, \, \ldots, \, 2\gf\}.
  \end{equation}
  (see \cite[Lemma~3.3]{Ag-Reno} --- in particular, Equation~(3.35)).
  As~$\a\to\a^0$, we have~$\Xi(\a)\to\Xi(\a^0)$
  (in any norm, since~$\Xi$ takes its values in
  the finite-dimensional space~$\Harm^1(M)$) and, by elliptic regularity,
  $\Phi(\a, \, \db)\to\Phi(\a^0, \, \db)$ locally uniformly 
  with all its derivatives, away from the points~$a_1^0$, \ldots, $a_n^0$.
  Therefore, the left-hand side of~\eqref{u*1,1} is continuous 
  as~$\a\to\a^0$. It follows that
  \begin{equation} \label{u*1,2}
   \int_{\gamma_h} \left(\Xi(\a) + \d^*\Phi(\a, \, \db) + \mathcal{A}_h\right) 
   = \int_{\gamma_h} \left(\Xi(\a^0) + \d^*\Phi(\a^0, \, \db) 
    + \mathcal{A}_h\right) 
  \end{equation}
  for any~$h\in\{1, \, \ldots, \, 2\gf\}$ and any~$\a$ 
  close enough to~$\a^0$. Equation~\eqref{u*1,2},
  together with~\eqref{canonicalvf}, implies
  \begin{equation} \label{u*1,3}
   \int_{\gamma_h} j(u^*[\a]) = \int_{\gamma_h} j(u^*[\a^0])
  \end{equation}
  for any~$h\in\{1, \, \ldots, \, 2\gf\}$.
  In case~$\gf = 0$, the condition~\eqref{u*1,3} is empty.
 \end{step}
 
 \begin{step}
  We fix~$\a = (a_1, \, \ldots, \, a_n)\in\UU$ with~$\a\neq\a^0$.
  We define $\mathbf{t} := (t_1, \, \ldots, \, t_n)\in\R^n$
  as~$t_k := \dist_g(a_k, \, a_k^0)$ for any~$k\in\{1, \, \ldots, \, n\}$,
  and
  \[
   M_\a := M\setminus \bigcup_{k=1}^n \bar{B}_{2t_k}(a_k^0)
  \]
  By construction, $a_k\notin M_\a$, for any~$k$.
  Equation~\eqref{canonicalvf} implies
  \[
   \d j(u^*[\a]) - \d j(u^*[\a^0])
   = 2\pi\sum_{k=1}^n d_k \left(\delta_{a_k} - \delta_{a_k^0}\right)
  \]
  and hence, $\d(j(u^*[\a]) - j(u^*[\a^0])) = 0$ in~$M_\a$.
  If~$\a$ is close enough to~$\a^0$, we have
  \begin{equation} \label{u*1,4}
   \begin{split}
   \int_{\partial B_{2t_k}(a_k^0)}\left(j(u^*[\a]) - j(u[\a^0])\right)
   &= 2\pi\ind(u^*[\a], \, \partial B_{2t_k}(a_k^0))
   - 2\pi\ind(u^*[\a^0], \, \partial B_{2t_k}(a_k^0)) \\
   &= 2\pi d_k - 2\pi d_k = 0 
   \end{split}
  \end{equation}
  Moreover, $j(u^*[\a]) - j(u^*[\a^0])$ integrates to zero
  on each of the curves~$\gamma_h$ considered above 
  --- see Equation~\eqref{u*1,3}.
  As the homology classes of~$\gamma_h$ generate~$H_1(M; \, \Z)$,
  from~\eqref{u*1,3} and~\eqref{u*1,4} we deduce 
  that~$j(u^*[\a]) - j(u^*[\a^0])$ is not only closed,
  but also exact in~$M_\a$. In other words, there exists
  a smooth function~$\beta_*[\a]\colon M_\a\to\R$ such that
  \begin{equation} \label{u*beta}
   j(u^*[\a]) - j(u^*[\a^0]) = \d\beta_*[\a] \qquad \textrm{in } M_\a
  \end{equation}
  (Equation~\eqref{u*beta} remains satisfied when~$M$ is simply connected,
  thanks to~\eqref{u*1,4}.) 
  The function~$\beta_*[\a]$ is uniquely identified by~\eqref{u*beta}
  up to an additive constant, and we select a unique~$\beta_*[\a]$
  by imposing 
  \begin{equation} \label{u*betazero}
   \beta_*[\a](b^0) = 0
  \end{equation}
  where~$b^0$ is the same point as in~\eqref{pointvalue}.
  The function~$\beta_*[\a]$ is harmonic in~$M_{\a}$:
  indeed, by taking the codifferential of~\eqref{u*beta},
  we obtain
  \begin{equation} \label{u*bharmonic}
   -\Delta\beta_*[\a] = \d^*\d\beta_*[\a]
   = \d^* j(u^*[\a]) - \d^* j(u^*[\a^0])
   \stackrel{\eqref{canonicalvf}}{=} 0
   \qquad \textrm{in } M_{\a}.
  \end{equation}
 \end{step}
 
 \begin{step}
  For any~$p\in [1, \, 2)$, $\a\mapsto j(u^*[\a])$ is continuous 
  as an~$L^p$-valued map, by Lemma~\ref{lemma:j}. Therefore, 
  the $L^p$-norm of~$\d\beta_*[\a]$ in~$M_\a$ is bounded independently
  of~$\a$, due to~\eqref{u*beta}.
  We extend~$\beta_*[\a]$ to a smooth function 
  $\beta\colon M\to\R$, in such a way that
  \begin{equation} \label{u*beta_norm}
   \norm{\d\beta[\a]}_{W^{1,p}(M)} \leq C_p
   \qquad \textrm{for any } p\in[1, \, 2)
  \end{equation}
  For instance, we may construct the extension inside~$B_{2t_k}(a_k^0)$ 
  in the following way: first, we define~$\beta_*[\a]$ 
  in~$B_{2t_k}(a_k^0)\setminus B_{t_k}(a_k^0)$ by
  reflection about the boundary of~$B_{2t_k}(a_k^0)$.
  Then, we take a cut-off function 
  $\eta\in C^\infty_{\mathrm{c}}(B_{2t_k}(a_k^0))$
  that is equal to~$1$ in a neighbourhood of~$\bar{B}_{t_k}(a_k^0)$
  and we define
  \[
   \beta[\a] := (1 - \eta) \beta_*[\a] + \beta_k\, \eta, 
   \qquad \textrm{where } \
   \beta_k := \fint_{B_{2t_k}(a_k^0)\setminus B_{t_k}(a_k^0)} \beta_*[\a] \, \Vg
  \]
  The Poincar\'e inequality implies
  \[
   \begin{split}
    \norm{\d\beta[\a]}_{L^p(B_{2t_k}(a_k^0)}
    &\lesssim \norm{\d\beta_*[\a]}_{L^p(B_{2t_k}(a_k^0)\setminus B_{t_k}(a_k^0))}
    + t_k^{-1}\norm{\beta_*[\a] - \beta_k}_{L^p(B_{2t_k}(a_k^0)
    \setminus B_{t_k}(a_k^0))} \\
    &\lesssim \norm{\d\beta_*[\a]}_{L^p(B_{2t_k}(a_k^0)\setminus B_{t_k}(a_k^0))}
    \lesssim \norm{\d\beta_*[\a]}_{L^p(B_{3t_k}(a_k^0)\setminus B_{2t_k}(a_k^0))}
   \end{split}
  \]
  and~\eqref{u*beta_norm} follows. We claim that
  \begin{equation} \label{u*beta_norm2}
   \norm{\beta(\a)}_{L^q(M)} \leq C_q 
   \qquad \textrm{for any } q\in [1, \, +\infty)
  \end{equation}
  Indeed, let~$K\subseteq M$ be a closed neighbourhood of~$b^0$
  that does not contain any of the points~$a_1^0$, \ldots, $a_n^0$.
  For~$\a$ close enough to~$\a^0$, we have $K\cap M_{\a}=\emptyset$
  --- in particular, $a_k\notin K$ for any~$k$.
  Then, by elliptic regularity, $j(u^*[\a])$ is bounded in~$L^\infty(K)$
  by a constant that does not depend on~$\a$. Taking~\eqref{u*beta}
  and~\eqref{u*betazero} into account, we deduce that
  \begin{equation} \label{u*beta_norm3}
   \norm{\beta[\a]}_{L^\infty(K)}
   = \norm{\beta_*[\a]}_{L^\infty(K)} \leq C
  \end{equation}
  for some constant~$C$ that does not depend on~$\a$.
  Equation~\eqref{u*beta_norm2} now follows from~\eqref{u*beta_norm}
  and~\eqref{u*beta_norm3}, via an `ad-hoc' Sobolev-Poincar\'e
  inequality. 
 \end{step}
 
 \begin{step}
  Let~$\mathrm{Exp}_a\colon T_aM\to M$
  denote the exponential map at a point~$a\in M$.
  If~$\a$ is close enough to~$\a^0$, then for any~$k\in\{1, \, \ldots, \, n\}$
  there exists a unique~$v_k\in T_{a_k^0} M$
  such that $\mathrm{Exp}_{a_k^0}(\abs{\mathbf{t}}v_k) = a_k$. 
  Let
  \[
   Q[\a] := \frac{\beta[\a]}{\abs{\mathbf{t}}} 
    - 2\pi\sum_{k=1}^n d_k f(\cdot, \, a_k^0, \, v_k)
  \]
  (where~$f(\cdot, \, a_k^0, \, v_k)$ is the function
  given by Lemma~\ref{lemma:f}). We claim that
  \begin{equation} \label{u*bdiff}
   \norm{Q[\a]}_{L^p(M)}\to 0 \quad \textrm{as } \a\to\a^0,
   \qquad \textrm{for any } p\in [1, \, 2).
  \end{equation}
  Once~\eqref{u*bdiff} is proved, observing that~$\beta[\a^0] = 0$,
  we will deduce that the map~$\UU\to L^p(M)$, $\a\mapsto\beta[\a]$
  is differentiable at~$\a^0$, with
  \begin{equation} \label{u*bdiff7}
   \D_{a_k}(\beta[\a^0])[v] = 2\pi d_k \, f(\cdot, \, a_k^0, \, v)
  \end{equation}
  for any~$k\in\{1, \, \ldots, \, n\}$ and any~$v\in T_{a_k^0} M$.

  We proceed to the proof of~\eqref{u*bdiff}.
  There is no loss of generality 
  in assuming~$p>1$, so we take~$p\in (1, \, 2)$.
  Let~$q\in (2, \, +\infty)$ be such that
  $1/p + 1/q = 1$. Let~$h\in L^q(M)$ be any function such that
  $\norm{h}_{L^q(M)}\leq 1$, $\int_M h\, \Vg = 0$. By Lemma~\ref{lemma:h},
  there exists a $1$-form $\tau\in W^{1,q}(M, \, T^*M)$
  such that
  \begin{gather}
   \d^*\tau = h \label{taug}\\
   \tau(a_k^0) = 0 \quad \textrm{for any } k\in\{1, \ldots, \, n\}
    \label{tauzero} \\
   \norm{\tau}_{W^{1,q}(M)} \leq C \label{taunorm}
  \end{gather}
  for some constant~$C$ that depends on~$M$, $q$ and~$\a^0$,
  but \emph{not} on~$\a$. Finally, let
  \begin{equation} \label{u*R}
   R[\a] := j(u^*[\a]) - j(u^*[\a^0]) - \d\beta[\a] 
  \end{equation}
  Since~$h$ has zero average, we have
  \[
   \int_{M} \left(Q[\a] - \fint_M Q[\a]\Vg \right) h \, \Vg 
    = \int_{M} Q[\a] \, h \, \Vg 
    = \int_{M} Q[\a] \, \d^*\tau \, \Vg 
  \]
  and hence, by integrating by parts,
  \begin{equation} \label{u*bdiff1}
   \begin{split}
    &\int_{M} \left(Q[\a] - \fint_M Q[\a]\Vg \right) h \, \Vg 
    = \int_{M} \left(\d Q[\a], \, \tau\right)_g \Vg \\
    &\hspace{1cm} \stackrel{\eqref{u*R}}{=} 
    \int_{M} \left(\frac{j(u^*[\a]) - j(u^*[\a^0])}{\abs{\mathbf{t}}} 
     - 2\pi\sum_{k=1}^n d_k \, \d f(\cdot, \, a_k^0, \, v_k), \,
     \tau\right)_g \Vg 
    + \int_{M} \left(\frac{R[\a]}{\abs{\mathbf{t}}}, \, \tau\right)_g \Vg \\
    &\hspace{1cm} =: I_1 + I_2
   \end{split}
  \end{equation}
  The map~$a\mapsto j(u^*[\a])$ is differentiable as a~$W^{-1,p}$-valued
  map, and its differential is given in terms of~$f(\cdot, \, a_k^0, \, v)$
  by Lemma~\ref{lemma:f}. Therefore, keeping~\eqref{taunorm}
  into account, we deduce
  \begin{equation} \label{u*bdiff3}
   I_1 \lesssim \norm{\frac{j(u^*[\a]) - j(u^*[\a^0])}{\abs{\mathbf{t}}} 
     - 2\pi\sum_{k=1}^n d_k \, \d f(\cdot, \, a_k^0, \, v_k)}_{W^{-1,p}(M)}
     \to 0 \qquad \textrm{as } \a\to\a^0.
  \end{equation}
  It remains to estimate the other term, $I_2$.
  As~$q>2$, the $1$-form~$\tau$ is H\"older continuous of
  exponent~$1 - 2/q$, by Sobolev embedding. Then,
  Equation~\eqref{tauzero} implies
  \[
   \begin{split}
    \norm{\tau}_{L^q(M\setminus M_\a)}
    \lesssim \norm{\tau}_{W^{1,q}(M)}
    \sum_{k=1}^n t_k^{1 - 2/q} \, \mathrm{Vol}(B_{2t_k}(a_k^0))^{1/q}
    \lesssim \norm{\tau}_{W^{1,q}(M)}
    \sum_{k=1}^n t_k \stackrel{\eqref{taunorm}}{\lesssim} \abs{\mathbf{t}}
   \end{split}
  \] 
  On the other hand, the $1$-form~$R[\a]$ is supported
  in~$M\setminus M_\a$ and belongs to~$L^s(M)$ for any~$s<2$,
  due to~\eqref{u*beta} and~\eqref{u*beta_norm}.
  In fact, the~$L^s$-norm of~$R[\a]$ is bounded independently of~$\a$,
  thanks to Lemma~\ref{lemma:j} and~\eqref{u*beta_norm}. Therefore,
  \begin{equation} \label{u*bdiff4}
   \begin{split}
    I_2 \lesssim \frac{1}{\abs{\mathbf{t}}} 
    \norm{R[\a]}_{L^p(M\setminus M_\a)} \norm{\tau}_{L^q(M\setminus M_\a)}
    \lesssim \norm{R[\a]}_{L^p(M\setminus M_\a)}
    \lesssim C_s%\norm{R[\a]}_{L^s(M\setminus M_\a)} 
    \abs{\mathbf{t}}^{2/p- 2/s}
   \end{split}
  \end{equation}
  for any~$s$ such that~$p < s < 2$. Combining~\eqref{u*bdiff1},
  \eqref{u*bdiff3} and~\eqref{u*bdiff4}, and taking the supremum 
  over all admissible functions~$h$, we deduce
  \begin{equation} \label{u*bdiff5}
   \norm{Q[\a] - \fint_{M} Q[\a]\, \Vg}_{L^p(M)}
     \to 0 \qquad \textrm{as } \a\to\a^0.
  \end{equation}
  Let~$V\subseteq M$ be an open neighbourhood of the point~$b^0$,
  such that~$a_k^0\notin\overline{V}$ for any~$k$.
  For~$\a$ is close enough to~$\a^0$, we have
  $V\subseteq M_{\a}$. Then, the function~$Q[\a]$
  is harmonic in~$V$, because both~$\beta[\a]$
  and~$f(\cdot, \, d_k, \, v_k)$ are harmonic in~$M_{\a}$
  --- by Equation~\eqref{u*bharmonic} and Lemma~\ref{lemma:f}, respectively.
  Equation~\eqref{u*bdiff5} and the Calderon-Zygmund
  theoryimply that $Q[\a] - \fint_M Q[\a] \, \Vg\to 0$
  in~$W^{2,p}_{\mathrm{loc}}(V)$ as~$\a\to\a^0$.
  By Sobolev embedding, it follows that $Q[\a] - \fint_M Q[\a] \, \Vg\to 0$
  locally uniformly in~$V$ as~$\a\to\a^0$. However, we have
  \[
   Q[\a](b^0) \stackrel{\eqref{fzero}, \, \eqref{u*betazero}}{=} 0
  \]
  which implies
  \begin{equation} \label{u*bdiff6}
   \fint_{M} Q[\a]\, \Vg \to 0 \qquad \textrm{as }\a\to\a^0.
  \end{equation}
  Combining~\eqref{u*bdiff5} with~\eqref{u*bdiff6}, we obtain~\eqref{u*bdiff}.
 \end{step}

 \begin{step}
  We consider the vector field~$w_*[\a] := e^{i\beta[\a]}u^*[\a^0]$.
  Due to~\eqref{u*bdiff} and~\eqref{u*bdiff7},
  the map~$\UU\to L^p_{\tan}(M)$, $\a\mapsto w_*[\a]$ is differentiable
  at~$\a^0$, with
  \begin{equation} \label{u*gradv}
   \D_{a_k}(w_*[\a^0]) [v] = 2\pi d_k \, f(\cdot, \, a_k^0, \, v) \, iu^*[\a^0]
  \end{equation}
  for any~$k\in\{1, \, \ldots, \, n\}$ and any~$v\in T_{a_k^0}M$.
  By an explicit computation, we see
  \begin{equation} \label{u*v}
   j(w_*[\a]) = \d\beta[\a] + j(u^*[\a^0])
   \stackrel{\eqref{u*beta}}{=} j(u^*[\a])
   \qquad \textrm{in } M_{\a}
  \end{equation}
  and
  \begin{equation} \label{u*vzero}
   w_*[\a](b^0) \stackrel{\eqref{u*betazero}}{=} u^*[\a](b^0)
  \end{equation}
  By Lemma~\ref{lemma:uniquenessj}, we deduce
  that~$w_*[\a] = u^*[\a]$ in~$M_{\a}$. For any~$p\in [1, \, 2)$,
  it follows that
  \begin{equation} \label{u*gradv2}
   \begin{split}
    \frac{\norm{w_*[\a] - u^*[\a]}_{L^p(M)}}{\abs{\mathbf{t}}}
    = \frac{\norm{w_*[\a] - u^*[\a]}_{L^p(M\setminus M_{\a})}}{\abs{\mathbf{t}}}
    \leq \frac{2\mathrm{Vol}(M\setminus M_{\a})^{1/p}}{\abs{\mathbf{t}}}
    \lesssim \abs{\mathbf{t}}^{2/p-1}\to 0
   \end{split}
  \end{equation}
  as~$\a\to 0$.
  The lemma follows from~\eqref{u*gradv} and~\eqref{u*gradv2}.
  \qedhere
 \end{step}
\end{proof}

\begin{proof}[Proof of Proposition~\ref{prop:Gdiff}]
 Let~$\theta\in H^1(M)$. Let~$\a_0 = (a_1^0, \, \ldots, \, a_n^0)\in\UU$,
 $\a=(a_1, \, \ldots, \, a_n)\in\UU$. As in the proof of Lemma~\ref{lemma:u*},
 we define~$\mathbf{t} := (t_1, \, \ldots, \, t_k)\in\R^n$
 by~$t_k := \dist_g(a_k, \, a_k^0)$. For any index~$k$,
 we let~$v_k$ be the unique element of~$T_{a_k^0} M$
 such that $\mathrm{Exp}_{a_k^0}(\abs{\mathbf{t}}v_k) = a_k$.
 Let~$p\in (1, \, 2)$. By Lemma~\ref{lemma:u*}, we can write
 \[
  u^*[\a] = u^*[\a^0] + \abs{\mathbf{t}} z[\a] + \abs{\mathbf{t}} r[\a],
 \]
 where
 \begin{gather}
  z[\a] := 2\pi\sum_{k=1}^n 
   d_k \, f(\cdot, \, a_k^0, \, v_k) \, i u^*[\a^0] \\
  \norm{r[\a]}_{L^p(M)} \to 0 \quad \textrm{as } \a\to\a^0
   \label{Gdiffr}
 \end{gather}
 From the definition of the functional~$\GG$, Equation~\ref{G}, we deduce
 \[
  \begin{split}
   &\GG(u^*[\a], \, \theta) - \GG(u^*[\a^0], \, \theta)
   = \frac{1}{2}\int_M \left(\abs{\Sh(e^{i\theta}u^*[\a])}^2_g
    - \abs{\Sh(e^{i\theta} u^*[\a^0])}^2_g\right)\Vg \\
   &\quad = \abs{\mathbf{t}}\int_M \left(\Sh(e^{i\theta}u^*[\a^0]), \, 
    \Sh(e^{i\theta}z[\a] + e^{i\theta}r[\a])\right)_g \Vg 
    + \frac{\abs{\mathbf{t}}^2}{2}
    \int_M \abs{\Sh(e^{i\theta}z[\a] + e^{i\theta}r[\a])}^2_g \Vg \\
   &\quad = \abs{\mathbf{t}}\int_M \left(\Sh(e^{i\theta}u^*[\a^0]), \, 
    \Sh(e^{i\theta}z[\a] + e^{i\theta}r[\a])\right)_g \Vg \\
   &\qquad\qquad\qquad + \frac{\abs{\mathbf{t}}}{2}
    \int_M \left(\Sh(e^{i\theta} u^*[\a] - e^{i\theta}u^*[\a^0]), \,
    \Sh(e^{i\theta}z[\a] + e^{i\theta}r[\a])\right)_g \Vg  
  \end{split}
 \]
 Therefore, in order to prove that~$\a\mapsto\GG(u^*[\a], \, \theta)$
 is differentiable at~$\a^0$, it suffices to show that
 \begin{gather} 
  \int_M \left(\Sh(e^{i\theta}u^*[\a^0]), \, 
    \Sh(e^{i\theta}r[\a])\right)_g \Vg \to 0, \label{Gdiff1} \\
  \int_M \left(\Sh(e^{i\theta} u^*[\a] - e^{i\theta}u^*[\a^0]), \,
    \Sh(e^{i\theta}z[\a] + e^{i\theta}r[\a])\right)_g \to 0  \label{Gdiff2}
 \end{gather}
 as~$\a\to\a^0$. Equation~\eqref{Gdiff1}
 follows immediately from~\eqref{Gdiffr}, taking into account that
 $u^*[\a^0]$ is bounded. 
 Lemma~\ref{lemma:u*} implies that $u^*[\a]\to u^*[\a^0]$ 
 strongly in~$L^q_{\tan}(M)$ for any~$q < 2$, as~$\a\to\a^0$.
 Moreover, $\norm{u^*[\a]}_{L^\infty(M)} = 1$ for any~$\a$
 and hence, by interpolation, $u^*[\a]\to u^*[\a^0]$ in~$L^q_{\tan}(M)$ 
 for any~$q < +\infty$. Choosing~$q$ in such a way that
 $1/p + 1/q = 1$, we deduce~\eqref{Gdiff2}
 from the H\"older inequality. This proves that
 $\a\mapsto\GG(u^*[\a], \, \theta)$ is differentiable 
 at~$\a^0$.
 
 As a byproduct of the argument above, we obtain
 a characterisation of the differential of~$\GG$:
 \begin{equation} \label{Gdiff-f}
  \D_{a_k}\GG(u^*[\a^0], \, \theta) [v]
  = 2\pi d_k \int_M f(\cdot, \, a_k^0, \, v)
   \left(\Sh(e^{i\theta}u^*[\a^0]), \, 
    \Sh(e^{i\theta} iu^*[\a^0])\right)_g \Vg
 \end{equation}
 for any index~$k$ and any~$v\in T_{a_k^0} M$.
 We work at fixed~$k$ and~$v$ from now on. To simplify the 
 notation, we write~$f$ instead of~$f(\cdot, \, a_k^0, \, v)$.
 We assume that~$\theta$ is a critical point for~$\GG(u^*[\a^0], \, \cdot)$
 --- that is, $\theta$ satisfies the Euler-Lagrange equation
 \begin{equation} \label{Gdiff-EL}
  -\Delta\theta + \left(\Sh(e^{i\theta}u^*[\a^0]), \, 
    \Sh(e^{i\theta} iu^*[\a^0])\right)_g = 0
 \end{equation}
 Equation~\eqref{Gdiff-EL} implies that~$\Delta\theta\in L^\infty(M)$.
 By elliptic regularity, it follows that
 $\theta\in W^{2,q}(M)$ for any~$q<+\infty$
 and, by Sobolev embedding, $\theta\in C^{1,\alpha}(M)$
 for any~$\alpha\in(0, \, 1)$.
 Equation~\eqref{Gdiff-f} may be written equivalently as
 \begin{equation} \label{Gdiff3}
  \D_{a_k}\GG(u^*[\a^0], \, \theta) [v]
  = 2\pi d_k \int_M f \, \Delta\theta\, \Vg
 \end{equation}
 We integrate by parts (and recall that~$-\Delta f = 0$ 
 in~$M\setminus\{a_k^0\}$, by Lemma~\ref{lemma:f}):
 \[
  \begin{split}
   \frac{\D_{a_k}\GG(u^*[\a^0], \, \theta) [v]}{2\pi d_k}
   &= \lim_{\eta\to 0}\int_{M\setminus B_\eta(a_k^0)} 
    f \, \Delta(\theta - \theta(a_k^0)) \, \Vg \\
   &= \lim_{\eta\to 0}\int_{\partial B_\eta(a_k^0)} 
    \left(- f\frac{\partial\theta}{\partial\nu}
   + \left(\theta - \theta(a_k^0)\right)
    \frac{\partial f}{\partial\nu}\right) \d\H^1
  \end{split}
 \]
 where~$\nu$ is the outward unit normal to~$\partial B_\eta(a_k^0)$.
 Let~$w$ be a smooth field, defined in a neighborhood of~$a_k^0$,
 such that~$w(a_k^0) = w$.
 By applying Lemma~\ref{lemma:f}, we obtain
 \[
  \begin{split}
   \frac{\D_{a_k}\GG(u^*[\a^0], \, \theta) [v]}{2\pi d_k}
   &= \lim_{\eta\to 0} \frac{1}{2\pi\eta} \int_{\partial B_\eta(a_k^0)} 
   (i\nu, \, w)_g \left(\frac{\partial\theta}{\partial\nu}
   + \frac{\theta - \theta(a_k^0)}{\eta}\right) \d\H^1
  \end{split}
 \]
 As we have already seen, elliptic regularity implies 
 that~$\theta\in C^1(M)$. Therefore,
 \begin{equation} \label{Gdiff4}
  \begin{split}
   \frac{\D_{a_k}\GG(u^*[\a^0], \, \theta) [v]}{2\pi d_k}
   &= \lim_{\eta\to 0} \frac{1}{\pi\eta} \int_{\partial B_\eta(a_k^0)} 
    (i\nu, \, w)_g \, \frac{\partial\theta}{\partial\nu} \, \d\H^1
%    &= \lim_{\eta\to 0} \frac{1}{\pi\eta} \int_{\partial B_\eta(a_k^0)} 
%     (i\nu, \, w)_g \, (\nabla\theta, \, \nu)_g \, \d\H^1 
  \end{split}
 \end{equation}
 To evaluate the integral at the right-hand side of~\eqref{Gdiff4},
 we work in geodesic normal coordinates centred at~$a_k^0$,
 and identify points in~$B_\eta(a_k^0)\subseteq M$
 with points in~$B_\eta(0)\subseteq\R^2$, with~$a_k^0=0$. We choose as~$w$
 the vector field that is represented in coordinates
 by a constant map, $w = v$. The geodesic coordinates
 map the geodesic circle~$\partial B_\eta(a_k^0)\subseteq M$ 
 to~$\partial B_\eta(0)\subseteq\R^2$,
 and the metric tensor in geodesic coordinates satisfies
 $g_{ij}(x) = \delta_{ij} + \O(\abs{x}^2)$. Therefore, we obtain
 \begin{equation*} 
  \begin{split}
   \frac{\D_{a_k}\GG(u^*[\a^0], \, \theta) [v]}{2\pi d_k}
   &= \lim_{\eta\to 0} \frac{1}{\pi} \int_0^{2\pi} 
    \left(-v^1\sin\phi + v^2\cos\phi\right) \,
    \left(\partial_1\theta(\eta e^{i\phi}) \cos\phi 
    + \partial_2\theta(\eta e^{i\phi}) \sin\phi\right) \d\phi \\
   &= v^2\,\partial_1\theta(0) - v^1\,\partial_2\theta(0)
   = (i\nabla\theta(0), \, v)_g
  \end{split}
 \end{equation*}
 and the proposition follows.
\end{proof}

\subsection{ The gradient of the renormalized energy\texorpdfstring{~$W$}{ W}}

We turn our attention to the renormalised energy~$W$,
defined in~\eqref{eq:reno_intro}. We recall the definition here.
Let~$\mathscr{U}\subseteq M^n$ be as in Section~\ref{sect:G}.
For any~$\a\in\mathscr{U}$, any~$\db\in\Z^n$ 
with~$(\a, \, \db)\in\mathscr{A}^n$ and any~$\xi\in\mathcal{L}(\a, \,\db)$,
we consider the unique ``reference'' canonical harmonic field
$u^*[\a, \, \db, \, \xi]$, defined by the 
conditions~\eqref{canonicalvf} and~\eqref{pointvalue}. 
The renormalized energy is given as 
\begin{equation}
\label{eq:reno}
  W(\a, \, \db, \, \xi, \, \theta) := W^{\intr}(\a, \, \db, \, \xi) 
  + \GG(u^*[\a, \, \db, \, \xi], \, \theta),
\end{equation}
where~$W^{\intr}$ is the intrinsic component, 
defined in~\eqref{eq:renoJIintr}, and~$\GG$ is as in~\eqref{G}. 
% The function~$W^{\intr}$
% is smooth~\cite[Proposition~2.4]{JerrardIgnat_full}, so
% Proposition~\ref{prop:Gdiff} implies that~$W$ is a
% differentiable function of~$\a$ (at~$\db$, $\theta$ fixed).
We define the gradient of~$W$
with respect to the variable~$\a$ as in~\cite{Ag-Reno}:
for any~$(\a, \, \db)\in\mathscr{A}^n$ 
and~$\xi\in\mathcal{L}(\a, \, \db)$, there exists a unique
smooth map~$b\in M^n\mapsto\Xi(\b)\in\Harm^1(M)$, 
locally defined for~$\b$ in a neighbourhood of~$\a$,
such that $\Xi(\b)\in\mathcal{L}(\a, \, \db)$ for any~$\b$
and~$\Xi(\a) = \xi$ \cite[Lemma~3.3]{Ag-Reno}. Then, we define
\begin{equation} \label{gradW}
 \nabla_{\a} W(\a, \, \db, \, \xi, \, \theta)
  := \nabla_{\b} W(\b, \, \, \db, \, \Xi(\b), \, \theta)_{|\b=\a}
%   = \nabla_{\a} W^\intr(\a, \, \, \db, \, \Xi(\a))_{|\a=\a^0}
%    + \nabla_{\a} \GG(\u^*[\a_0], \, \theta)
\end{equation}
The right-hand side of~\eqref{gradW} is well-defined,
because~$W^\intr$ is smooth~\cite[Proposition~2.4]{JerrardIgnat_full}
and~$\GG$ is differentiable (by Proposition~\ref{prop:Gdiff}).
Moreover, if~$\theta$ is a critical point 
for~$\GG(u^*[\a, \, \d, \, \xi], \, \cdot)$, then
the gradient~$\nabla_{\a} W$ can be further characterized as
\begin{equation} \label{gradWbis}
 \nabla_{\a} W(\a, \, \db, \, \xi, \, \theta)
  = \nabla_{\a} W^\intr(\a, \, \db, \, \xi)
  + 2\pi \sum_{j=1}^n d_j \, i \, \nabla\theta(a_j)
\end{equation}
For further properties of~$\nabla_{\a} W^{\intr}$,
see~\cite[Section~3]{Ag-Reno}.

% \begin{remark} \label{rk:gradW}
%  {\BBB Let~$w^*$ be any canonical harmonic field
%  for~$(\a, \, \db, \, \xi)$, which may or may not satisfy
%  the renormalization condition~\eqref{pointvalue},
%  and let~$\theta\in H^1(M)$ be a critical point of~$\GG(w^*, \, \cdot)$.
%  By~\cite[Theorem~2.1]{JerrardIgnat_full}, there exists a
%  constant~$\kappa\in\R$ such that $w^*= e^{i\kappa} u^*[\a]$.
%  Due to~\eqref{G-symmetry}, the function~$\theta + \kappa$
%  is a critical point of~$\GG(e^{-i\kappa} w^*, \, \cdot)= \GG(u^*[\a], \, \cdot)$.
%  Therefore, Proposition~\ref{prop:Gdiff} implies
%  \begin{equation} \label{gradWtrasl}
%   \nabla_{\a} W(\a, \, \db, \, \xi, \, \theta + \kappa)
%   = \nabla_{\a} W^\intr(\a, \, \db, \, \xi)
%   + 2\pi \sum_{j=1}^n d_j \, i \nabla\theta(a_j)
%  \end{equation}
%  In order to simplify the notation, 
%  and motivated by the analogy with~\eqref{gradWbis},
%  from now on we will denote the right-hand side of~\eqref{gradWtrasl}
%  as~$\nabla_{\a} W(\a, \, \db, \, \xi, \, \theta)$, omitting 
%  the dependence on the constant~$\kappa$. 
%  (This is equivalent to assuming~$w^* = u^*[\a]$, up to a rotation.)
%  }
% \end{remark}

The next Proposition is the analogue of \cite[Proposition 3.4]{Ag-Reno} (see also \cite[Theorem VII.4, Theorem VIII.3]{BBH} and \cite[Theorem 5.1]{lin1} for the Euclidean versions) in the extrinsic framework and characterize~$\nabla_\a W$ in terms of the extrinsic analogue of the
harmonic canonical vector field, $u_0= e^{i\theta}u^*$. 
Given $(\a,\db)$ we consider geodesic balls $B_\eta(a_j)$ centered in $a_j$ and with radius $\eta$ such that their closures are pairwise disjoint. 
In what follows we denote 
with~$\nu$ the exterior unit normal to $\partial B_\eta(a_j)$.

\begin{prop} \label{prop:gradient_reno}
 Let~$(\a, \, \db)\in\mathscr{A}^n$, with~$\a\in\mathscr{U}$,
 and~$\xi\in\mathcal{L}(\a, \, \db)$.
 Let~$u^* = u^*[\a, \, \db, \, \xi]$ be the ``reference''
 canonical harmonic field defined in~\eqref{canonicalvf}, \eqref{pointvalue}.
 Let~$u_0:= e^{i\theta}u^*$ with 
 $\theta\in C^{1}(M)$ a critical point of $\GG(u^*, \, \cdot)$.
 Let~$j\in\{1, \, \ldots, \, n\}$ be fixed, and 
 let~$e$ be a smooth vector field defined in a neighbourhood~$U$
 of~$a_j$, such that
 \begin{equation*} %\label{hp:gradreno-div}
  \div e = 0 \qquad \textrm{in } U.
 \end{equation*}
 Then, we have
 \begin{equation*} %\label{eq:gradient_reno1}
  \begin{split}
   \lim_{\eta\to 0} \int_{\partial B_\eta(a_j)} 
    \left( (D_{e} u_0, \, D_{\nu} u_0)_g
    - \frac{1}{2} \abs{D u_{0}}^2_g (\nu, \, e)_g \right)\d\H^1
   = \left(\nabla_{a_j} W(\a, \, \db, \, \xi, \,\theta), \, e(a_j)\right)_g
  \end{split}
 \end{equation*}
\end{prop}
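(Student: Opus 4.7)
The plan is to reduce the statement to the intrinsic analogue \cite[Proposition~3.4]{Ag-Reno}, applied to~$u^*$, plus an explicit evaluation of a boundary integral involving the singularity of $j(u^*)$ and the gradient of~$\theta$. Since the Levi-Civita connection on the oriented Riemannian $2$-manifold $M$ preserves the almost complex structure $i$, and $i$ commutes with the pointwise rotation $e^{i\theta}$, differentiating $u_0 = e^{i\theta} u^*$ yields the pointwise identity
\[
 D_v u_0 = e^{i\theta}\bigl(D_v u^* + d\theta(v)\, i u^*\bigr)
 \qquad\text{for any } v \in TM.
\]
Since $e^{i\theta}$ is an isometry and $(D_v u^*,\, i u^*)_g = j(u^*)(v)$ by definition, this gives the two algebraic identities
\[
 |D u_0|^2_g = |Du^*|^2_g + |d\theta|^2_g + 2\,(d\theta,\, j(u^*))_g,
\]
\[
 (D_e u_0, D_\nu u_0)_g = (D_e u^*, D_\nu u^*)_g + d\theta(e)\,d\theta(\nu) + d\theta(e)\,j(u^*)(\nu) + d\theta(\nu)\,j(u^*)(e).
\]

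Substituting into the integrand and regrouping splits the integral over $\partial B_\eta(a_j)$ into three contributions $I_1(\eta)+I_2(\eta)+I_3(\eta)$: the purely-$u^*$ piece, the purely-$d\theta$ piece, and the cross piece. By the intrinsic Pohozaev-type identity of \cite[Proposition~3.4]{Ag-Reno} applied to the canonical harmonic field $u^*$ (which uses exactly the hypothesis $\div e = 0$ in $U$), we have
\[
 I_1(\eta) \xrightarrow[\eta\to 0]{}
 \bigl(\nabla_{a_j} W^{\intr}(\a, \, \db, \, \xi), \, e(a_j)\bigr)_g.
\]
For~$I_2(\eta)$, the argument used in the proof of Proposition~\ref{prop:Gdiff} shows that the Euler--Lagrange equation of~$\GG$ together with elliptic regularity forces~$\theta \in C^{1,\alpha}(M)$; hence $|d\theta|$ is bounded near~$a_j$, the integrand is~$O(1)$, and $I_2(\eta) = O(\eta) \to 0$.

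The heart of the argument is $I_3(\eta)$. In a local orthonormal frame $(\nu,\tau)$ with $\tau = i\nu$, writing $d\theta^\sharp = \theta^\nu\nu + \theta^\tau\tau$, $e = e^\nu\nu + e^\tau\tau$ and $j(u^*)^\sharp = j^\nu\nu + j^\tau\tau$, a short computation shows that the cross integrand collapses to
\[
 j(u^*)(\nu)\,(\nabla\theta,\, e)_g + j(u^*)(\tau)\,(i\nabla\theta,\, e)_g.
\]
Since~$u^*$ is a canonical harmonic field with a vortex of degree~$d_j$ at~$a_j$, writing it locally as $u^* = e^{i\psi}$ in a punctured neighbourhood gives $j(u^*) = d\psi$ with $\psi = d_j\phi + \psi_{\mathrm{smooth}}$ in geodesic normal coordinates; hence $j(u^*) = d_j\, d\phi + \eta_0$ with $\eta_0$ a smooth $1$-form. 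This yields $j(u^*)(\nu) = O(1)$ and $j(u^*)(\tau) = d_j/\eta + O(1)$ on $\partial B_\eta(a_j)$. Integrating against the continuous quantities $(\nabla\theta, e)_g$ and $(i\nabla\theta, e)_g$ over the circle of length $\sim 2\pi\eta$, the bounded radial part contributes $O(\eta)$, while the singular tangential part gives
\[
 I_3(\eta) \xrightarrow[\eta\to 0]{} 2\pi d_j\,\bigl(i\nabla\theta(a_j),\, e(a_j)\bigr)_g.
\]

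Summing the three limits and invoking the characterization~\eqref{gradWbis} of $\nabla_\a W$ at a critical point $\theta$ of $\GG(u^*,\,\cdot)$ yields the claimed formula. The main technical obstacle is verifying rigorously the local expansion $j(u^*) = d_j\, d\phi + O(1)$ in geodesic normal coordinates and tracking the remainders uniformly in $\phi$; the rest amounts to careful bookkeeping of integrals on small geodesic circles, of a kind already used in \cite{Ag-Reno}.
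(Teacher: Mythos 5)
Your proposal is correct and takes essentially the same route as the paper: both split the Pohozaev-type boundary integral using $j(u_0)=j(u^*)+\d\theta$ into the intrinsic part (handled by \cite[Proposition~3.4]{Ag-Reno}), an $\mathrm{O}(\eta)$ pure-$\d\theta$ part, and a cross term evaluated via the expansion $j(u^*)=\frac{d_j}{\eta}(i\nu)^\flat+\O(1)$, which yields $2\pi d_j\left(i\nabla\theta(a_j),\,e(a_j)\right)_g$ and the conclusion through Proposition~\ref{prop:Gdiff} and~\eqref{gradWbis}. The only cosmetic difference is that you expand $D u_0$ by the product rule while the paper rewrites the integrand directly in terms of $j(u_0)$; the algebra is the same.
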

% {\BBB The gradient~$\nabla_{\a_j} W(\a, \, \db, \, \xi, \, \theta)$
% has to be intended in the sense of~\eqref{gradW} and Remark~\ref{rk:gradW}}
% \footnote{More precisely, Equation~\eqref{eq:gradient_reno1}
% should be interpreted as follows:
% there exists a constant~$\kappa\in\R$ such that
% \begin{equation*}
%  \begin{split}
%    \lim_{\eta\to 0} \int_{\partial B_\eta(a_j)} 
%     \left( (D_{e} u_0, \, D_{\nu} u_0)_g
%     - \frac{1}{2} \abs{D u_{0}}^2_g (\nu, \, e)_g \right)\d\H^1
%    = \left(\nabla_{a_j} W(\a, \, \db, \, \xi, \,\theta+\kappa), \, e(a_j)\right)_g
%  \end{split}
% \end{equation*}
% We drop the dependence on~$\kappa$ in the notation 
% because~$\kappa$ plays no role in our analysis.}
\begin{proof}%[Proof of Proposition~\ref{prop:gradient_reno}]
Let us set 
\[
I_\eta(u_0):=\int_{\partial B_\eta(a_j)} 
    \left( (D_{e} u_0, \, D_{\nu} u_0)_g
    - \frac{1}{2} \abs{D u_{0}}^2_g (\nu, \, e)_g \right)\d\H^1.
\]
 We define~$I_\eta(u^*)$ in a completely analogous way.
A standard computation (see \cite{Ag-Reno}) shows that 
\[
I_\eta(u_0) = \int_{\partial B_\eta(a_j)} \left( j(u_0)[e] \, j(u_0)[\nu]
    - \frac{1}{2} \abs{j(u_0)}^2_g (\nu, \, e)_g \right)\d\H^1,
\]
with (recall that $u_0= e^{i\theta}u^*$)
\[
j(u_0) = \d \theta + \d \Psi +\xi,
\]
where $\Psi$ is given in \eqref{Phiad} and $\xi\in \mathcal{L}(\a,\db)$. 
In \cite[Proposition 3.4]{Ag-Reno} we proved that 
\begin{equation}
\label{eq:I(u*)}
 \lim_{\eta\to 0}I_\eta(u^*) 
 = \left(\nabla_{a_j}W^{\intr}(\a, \, \db, \, \xi), \, e\right),
\end{equation}
and that (see \cite[Formula 3.71]{Ag-Reno})
\begin{equation}
\label{eq:O1j}
j(u^*) = \d^* \Psi +\xi = \frac{d_j}{\eta}(i\nu)^\flat + \O(1), 
\end{equation}
where we used that $\xi$ is smooth.
Now, we write $I_\eta(u_0)$ in terms of $I_\eta(u^*)$. 
There holds
\begin{equation*}
\begin{split}
I_\eta(u_0) = I_\eta(u^*) & 
+\int_{\partial B_\eta(a_j)}\left( j(u^*)[\nu] \, \d \theta[e] 
+ \d\theta[\nu] \, \d \theta[e]\right. \\
&\qquad \left. -(j(u^*), \, \d\theta) \, (\nu, \, e)\right)\d\H^1 + \O(\eta).
\end{split}
\end{equation*}
Therefore, since \eqref{eq:O1j} implies that $j(u^*)[\nu]= \O_{\eta\to 0}(1)$, we obtain 
\begin{equation}
\begin{split}
I_\eta(u_0) &= I_\eta(u^*) + \frac{d_j}{\eta}\int_{\partial B_\eta(a_j)}\left(\partial_\nu\theta (i\nu,e)_g -(i\nu,\nabla\theta)_g(\nu,e)_g\right)\d\H^1 + \O(\eta)\\
& = I_\eta(u^*)  + 2\pi d_g\fint_{\partial B_\eta(a_j)}\left(-(\nabla\theta,\nu)_g(\nu,ie)_g
 -(\nabla\theta,i\nu)_g(i\nu,ie)_g\right)\d\H^1 + \O(\eta)\\
&= I_\eta(u^*) -2\pi d_j\fint_{\partial B_\eta(a_j)}(\nabla \theta, \, i e)\d\H^1 + \O(\eta)\\
&= I_\eta(u^*) + 2\pi d_j\fint_{\partial B_\eta(a_j)}(i\nabla \theta, \, e)\d\H^1 + \O(\eta).
\end{split}
\end{equation}
Recalling \eqref{eq:I(u*)} and Proposition \ref{prop:Gdiff}, we conclude that 
\[
\lim_{\eta\to 0}I_\eta(u_0) = \left(\nabla_{a_j}W^{\intr}(\a,\db,\xi) + 2\pi d_j i\nabla \theta(a_j), \, e\right)_g = \left(\nabla_{a_j} W(\a, \, \db, \, \xi, \,\theta), \, e(a_j)\right)_g.
\qedhere
\]
\end{proof}

\section{Vortex dynamics}
\label{sec:dyna_vortex}

\subsection{Analysis of a non-homogeneous stationary Ginzburg-Landau equation}
\label{sect:inhomGL}

In this section, we consider solutions~$u_\eps\in H^1_{\tang}(M)$
of the non-homogeneous equation
\begin{equation} \label{inhomGL}
 -\Delta_g u_\eps + \frac{1}{\eps^2} \left(\abs{u_\eps}^2_g - 1\right) u_\eps +\Sh^2 u_\eps
  = f_\eps, 
\end{equation}
where~$(f_\eps)_{\eps>0}$ is a given sequence of vector fields on~$M$,
which we assume to be bounded in~$L^2_{\tang}(M)$ (at least).
For the time being, there is still no dependence on time
in equation~\eqref{inhomGL}.
However, later in this Section we will apply these results
to the parabolic Ginzburg-Landau equation~\eqref{eq:GL_intro},
by working at fixed time~$t$ and taking~$f_\eps = -\partial_t u_\eps(t)/\abs{\log\eps}$.

We assume that~$u_\eps$ satisfies
\begin{equation} \label{hp:H1bounds-flat-bis} 
 \begin{split}
  \omega(u_\eps) \to 2\pi\sum_{j=1}^n d_j \delta_{a_j} \qquad 
  \textrm{in } W^{-1,1}(M) \quad  \textrm{as } \eps\to 0,
 \end{split} 
\end{equation}
where~$a_1$, \ldots, $a_n$ are distinct points in $M$ and $d_1$, \ldots, $d_n$
are non-zero integers with~$\sum_{j=1}^n d_j = \chi(M)$.
As usual, we denote~$\a := (a_1, \, \ldots, \, a_n)$
and~$\db := (d_1, \, \ldots, \, d_n)$. 
We further assume that
\begin{gather}
 \norm{u_\eps}_{L^\infty(M)} \leq C_0 \label{hp:H1bounds-Linfty-bis} \\
 F_{\eps}^{\extr}(u_\eps)=\int_{M} \left(\frac{1}{2}\abs{D u_\eps}^2_g 
  + \frac{1}{4\eps^2}(1 -  \abs{u_\eps}^2_g)^2 + \abs{\Sh u_\eps}^2\right)\Vg 
  \le \pi n \abs{\log\eps}  + C_0 \label{hp:H1bounds-energy-bis}
\end{gather}
for some $\eps$-independent constant~$C_0$.
Since 
\[
F_\eps^{\intr}(u_\eps)\le F_{\eps}^{\extr}(u_\eps),
\]
we may apply~\cite[Lemma 4.3, Corollary 4.5, Lemma~4.9]{Ag-Reno}
and find a limit field~$u_0$ such that~$\abs{u_0}_g = 1$ 
a.e.~in~$M$ and
\begin{equation} \label{convu0}
 u_\eps\xrightarrow{\eps\to 0} u_0 \qquad \textrm{strongly in } W^{1,p}(M)
 \textrm{ and in } H^1_{\mathrm{loc}}(M\setminus\{a_1, \, \ldots, \, a_n\})
\end{equation}
up to extraction of a subsequence.
Note that~\eqref{hp:H1bounds-Linfty-bis} and~\eqref{convu0} imply
$u_\eps\xrightarrow{\eps\to 0}u_0$ strongly in $L^q(M)$ for any~$q<+\infty$. 

\begin{prop}
\label{prop:gammaliminf_extr}
 Let~$u_\eps\in H^1_{\tang}(M)$ be a sequence of solutions of~\eqref{inhomGL}
 that satisfies~\eqref{hp:H1bounds-flat-bis}, 
 \eqref{hp:H1bounds-Linfty-bis}, \eqref{hp:H1bounds-energy-bis}.
 Assume that the sequence~$(f_\eps)_{\eps>0}$ 
 converges to $0$ in~$L^2_{\tang}(M)$.
 Then there exists $\xi\in \Harm^1(M)$ with $\xi\in \mathcal{L}(\a,\db)$
 and $\theta\in C^1(M)$ such that 
 \begin{equation}
 \label{eq:hodgeu0}
 j(u_0) = \d \theta + \d^*\Psi + \xi,
 \end{equation}
 where $\Psi$ is defined in \eqref{Phiad}.
 Moreover, $u_0$ can be written as
 $u_0 = e^{i\theta} u^*[\a, \, \db, \, \xi]$,
 where~$u^*[\a, \, \db, \, \xi]$ is the ``reference'' 
 canonical harmonic field defined in~\eqref{canonicalvf}--\eqref{pointvalue},
 and~$\theta$ is a critical point 
 of the functional~$\GG(u^*[\a, \, \db, \, \xi], \, \cdot)$. Finally,
 \begin{equation}
 \label{eq:gammalinf_extr}
  \liminf_{\eps\to 0}\left(F_\eps^{\extr}(u_\eps) - n\pi\abs{\log\eps} -n\gamma\right)
  % \ge W^{\intr}(\a,\db,\xi) + \GG(\theta,\a,\db, \xi).
  \ge W(\a, \, \db, \, \xi, \, \theta).
 \end{equation}
\end{prop}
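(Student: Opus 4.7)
The plan is to identify the limit $u_0$ via a Hodge decomposition of $j(u_0)$, derive the Euler--Lagrange equation for $\theta$ by testing \eqref{inhomGL} against $iu_\eps\,\psi$, and deduce the energy lower bound by adding the known intrinsic $\Gamma$-liminf to the continuity of the shape-operator term.

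\emph{Step 1 (identification of $u_0$).} Since $|u_0|=1$ a.e., the current $j(u_0)$ is well defined in $L^p$ for $p<2$, and passing to the limit in \eqref{hp:H1bounds-flat-bis} together with $\omega(u) = \d j(u) + \kappa\Vg$ yields $\d j(u_0) = 2\pi\sum d_j\delta_{a_j} - \kappa\Vg$ as distributions. Writing the Hodge decomposition $j(u_0) = \d\theta + \d^*\beta + \xi$ with $\theta\in H^1(M)$, $\beta$ a $2$-form of zero mean, and $\xi\in\Harm^1(M)$, the identity $\d\d^*\beta = -\Delta\beta$ (valid for $2$-forms on a surface) forces $\beta = \Psi[\a,\db]$. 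Setting $v := e^{-i\theta}u_0$ we have $|v|=1$ and $j(v) = \d^*\Psi + \xi$, so $v$ is a canonical harmonic field for $(\a,\db,\xi)$; in particular $\xi\in\mathcal{L}(\a,\db)$. By the uniqueness-up-to-rotation recalled after \eqref{canonicalvf}, $v = e^{i\kappa}u^*[\a,\db,\xi]$ for some $\kappa\in\R$, and absorbing $\kappa$ into $\theta$ gives $u_0 = e^{i\theta}u^*$.

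\emph{Step 2 (criticality of $\theta$).} Testing \eqref{inhomGL} against $iu_\eps\,\psi$ with arbitrary $\psi\in C^\infty(M)$, the penalty term vanishes pointwise since $(u_\eps,iu_\eps)_g=0$, and parallelism of the almost complex structure $i$ on the oriented surface $M$ gives $(Du_\eps,D(iu_\eps))_g = (Du_\eps, iDu_\eps)_g = 0$ pointwise. After integration by parts the remaining identity reads
\[
\int_M (j(u_\eps),\d\psi)_g\,\Vg + \int_M\psi\,(\Sh u_\eps,\Sh(iu_\eps))_g\,\Vg = \int_M (f_\eps, iu_\eps\psi)_g\,\Vg.
\]
Strong convergence $u_\eps\to u_0$ in $L^q(M)$ for every $q<\infty$ (from \eqref{convu0} and \eqref{hp:H1bounds-Linfty-bis}), $L^p$-convergence of $j(u_\eps)$ to $j(u_0)$, and $f_\eps\to 0$ in $L^2$ allow one to pass to the limit; Hodge orthogonality ($\d\psi$ is exact, while $\d^*\Psi$ and $\xi$ lie in its orthogonal complement) reduces $\int_M(j(u_0),\d\psi)_g$ to $\int_M(\d\theta,\d\psi)_g$. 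The resulting identity is the weak form of $-\Delta\theta + (\Sh u_0,\Sh(iu_0))_g = 0$, which is precisely $\partial_\theta\GG(u^*,\theta)\cdot\psi = 0$; since the right-hand side of this PDE is in $L^\infty$, elliptic regularity and Sobolev embedding upgrade $\theta$ to $C^{1,\alpha}(M)$.

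\emph{Step 3 (energy lower bound).} Split $F_\eps^{\extr}(u_\eps) = F_\eps^{\intr}(u_\eps) + \tfrac12\int_M|\Sh u_\eps|_g^2\,\Vg$. The shape term converges to $\tfrac12\int_M|\Sh(e^{i\theta}u^*)|^2$ by $L^\infty$ boundedness and $L^q$-convergence. For the intrinsic part, excise disjoint geodesic balls $B_\rho(a_j)$: inside each ball the standard vortex-ball lower bound gives $\liminf_\eps\bigl(F_\eps^{\intr}(u_\eps;B_\rho(a_j)) - \pi|\log(\rho/\eps)|\bigr) \ge \gamma$; outside, the $H^1_{\mathrm{loc}}$ strong convergence in \eqref{convu0} yields $\liminf_\eps F_\eps^{\intr}(u_\eps;M\setminus\bigcup B_\rho) \ge \tfrac12\int_{M\setminus\cup B_\rho}|j(u_0)|_g^2\,\Vg$ (using $|Du_0|_g = |j(u_0)|_g$ for unit tangent fields). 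Expanding $|j(u_0)|^2 = |\d\theta|^2 + |j(u^*)|^2 + 2(\d\theta, j(u^*))_g$ and noting that global Hodge orthogonality together with $\d\theta\in L^2(M)$ and $j(u^*)\in L^p(M)$ for $p<2$ forces $\int_{\cup B_\rho}(\d\theta, j(u^*))_g = o_\rho(1)$, we deduce in the limit $\rho\to 0$ that
\[
\liminf_\eps \bigl(F_\eps^{\intr}(u_\eps) - n\pi|\log\eps| - n\gamma\bigr) \ge W^{\intr}(\a,\db,\xi) + \tfrac12\int_M|\d\theta|_g^2\,\Vg,
\]
using the definition \eqref{eq:renoJIintr}. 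Adding the extrinsic term produces $W(\a,\db,\xi,\theta)$.

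\emph{Main obstacles.} The delicate points are: (i) passing to the limit in the weak form of \eqref{inhomGL} without losing any interaction between the vanishing Ginzburg--Landau penalty and the nonlinear shape term, handled cleanly by the pointwise orthogonality $u_\eps\perp iu_\eps$ and parallelism of $i$; and (ii) the fact that each $|d_j|=1$, needed in order to write the core-energy correction as $n\gamma$. The latter follows from comparing the upper bound $\pi n|\log\eps| + C_0$ with the Jerrard ball-construction lower bound $\pi\bigl(\sum|d_j|\bigr)|\log\eps| + O(1)$, which forces $\sum|d_j|\le n$ and hence $|d_j|=1$ for every $j$.
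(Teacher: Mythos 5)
Your Steps 1 and 2 are correct and are in substance the paper's own argument: the paper performs the Hodge decomposition at the level of $j(u_\eps)$ (writing $j(u_\eps)=\d\theta_\eps+\d^*\beta_\eps+\xi_\eps$ and passing to the limit), and it obtains the elliptic equation for $\theta$ exactly as you do, by testing \eqref{inhomGL} against $iu_\eps\psi$, using $(u_\eps,iu_\eps)_g=0$ and $(Du_\eps,iDu_\eps)_g=0$, and invoking elliptic regularity; identifying $e^{-i\theta}u_0$ as a canonical harmonic field and absorbing the rotation constant is also the same. (One small imprecision: to pass the cross term $\int_{M\setminus\cup B_\rho}(\d\theta, j(u^*))_g\,\Vg$ to its global value you need $\d\theta\in L^\infty$, not merely $L^2$, since $j(u^*)$ is only in $L^p$ for $p<2$; this is available from your Step 2, so it is harmless.)

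The genuine gap is in Step 3, in the inside-ball estimate $\liminf_\eps\bigl(F^{\intr}_\eps(u_\eps;B_\rho(a_j))-\pi\abs{\log(\rho/\eps)}\bigr)\ge\gamma$, which you attribute to ``the standard vortex-ball lower bound''. The Jerrard--Sandier ball construction gives only $\pi\abs{\log(\rho/\eps)}-C$ with an uncontrolled constant; upgrading that $O(1)$ error to the sharp core energy $\gamma$ is precisely the delicate second-order part of the analysis (on a surface it is part of the content of the Ignat--Jerrard $\Gamma$-liminf, and it also requires checking that the energy localizes on the balls). Since the constant $n\gamma$ is exactly what appears in \eqref{eq:gammalinf_extr}, your proof of the lower bound is incomplete as written. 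Note also why you cannot simply quote the intrinsic $\Gamma$-liminf for $u_\eps$ itself: that would give $W^{\intr}(\a,\db,\xi)+n\gamma$ but would lose the term $\frac{1}{2}\int_M\abs{\d\theta}^2_g\,\Vg$, which is presumably why you reproved the outer estimate by hand. The paper resolves both difficulties simultaneously by a rotation trick: it sets $u^*_\eps:=e^{-i\theta_\eps}u_\eps$, where $\d\theta_\eps$ is the exact part of $j(u_\eps)$, verifies $F^{\intr}_\eps(u^*_\eps)\le n\pi\abs{\log\eps}+C$, applies the intrinsic $\Gamma$-liminf \emph{globally} to $u^*_\eps$ (whose limit is the canonical harmonic field, with no exact part), and then recovers $\frac12\int_M\abs{\d\theta}^2$, the shape-operator term, and a vanishing cross term $\int_M(j(u^*_\eps),\d\theta_\eps)_g\,\Vg\to 0$ from an exact algebraic identity for $F^{\extr}_\eps(u_\eps)$. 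To repair your argument you should either prove (or correctly cite) a localized core-energy lower bound with the sharp constant $\gamma$, or adopt the rotation argument.
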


Before we proceed to the proof of Proposition~\ref{prop:gammaliminf_extr},
we state an auxiliary result. For any~$\eps> 0$, we consider
the Hodge decomposition of the $1$-form~$j(u_\eps)$ ---
that is, we write
\begin{equation} \label{Hodge-j}
 j(u_\eps) = \d \theta_\eps + \d^* \beta_\eps +\xi_\eps.
\end{equation}
where~$\theta_\eps$ is a smooth function,
$\beta_\eps$ is a $2$-form and~$\xi_\eps := \P_H(j(u_\eps))$
is a harmonic~$1$-form. The function~$\theta_\eps$
is uniquely determined up to an additive constant.
We impose the renormalisation condition
\begin{equation} \label{Hodge-j-average}
 \int_M \theta_\eps \, \Vg = 0
\end{equation}
We denote by~$\Lambda$ the set of cluster points of~$(\theta_{\eps})_{\eps> 0}$
in~$H^1(M)$. By definition, a function~$\theta\in H^1(M)$ belongs to~$\Lambda$
if and only if there exists a subsequence~$(\theta_{\eps_j})_{j\in\N}$
such that~$\theta_{\eps_j}\xrightarrow{j\to+\infty}\theta$ in~$H^1(M)$.

\begin{lemma} \label{lemma:thetaeps}
 Let~$u_\eps\in H^1_{\tang}(M)$ be a sequence of solutions of~\eqref{inhomGL}
 that satisfies~\eqref{hp:H1bounds-flat-bis}, 
 \eqref{hp:H1bounds-Linfty-bis}, \eqref{hp:H1bounds-energy-bis}. Assume
 that~$f_\eps\xrightarrow{\eps\to 0} 0$ in~$L^2_{\tang}(M)$.
 Then, the following statements hold:
 \begin{enumerate}[label=(\roman*)]
  \item the sequence~$(\theta_\eps)_{\eps> 0}$
  defined by~\eqref{Hodge-j}--\eqref{Hodge-j-average}
  is relatively compact in~$H^1(M)$
  (in particular, $\Lambda$ is nonempty);
  \item $\Lambda$ is a closed subset of~$C^1(M)$;
  \item there exists a compact set~$K_*\subseteq C^1(M)$
  (depending only on~$M$ and the constant~$C_0$ in~\eqref{hp:H1bounds-Linfty-bis})
  such that $\Lambda\subseteq K_*$.
 \end{enumerate}
\end{lemma}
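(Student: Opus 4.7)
The plan is to derive a Poisson equation for $\theta_\eps$ from the Ginzburg-Landau equation (\ref{inhomGL}), apply standard elliptic regularity to obtain an $H^2$-bound, and then pass to the limit to upgrade the compactness of cluster points to $C^1$. First I pair (\ref{inhomGL}) pointwise with $iu_\eps$; since $(u_\eps, iu_\eps)_g = 0$, the Ginzburg-Landau potential drops out and I obtain $(-\Delta_g u_\eps, iu_\eps)_g + (\Sh^2 u_\eps, iu_\eps)_g = (f_\eps, iu_\eps)_g$. A direct computation in a local orthonormal frame that is parallel at the base point, using that the almost complex structure $i$ is covariantly constant for $D$, gives the pointwise identity $d^* j(u_\eps) = (-\Delta_g u_\eps, iu_\eps)_g$. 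Applying $d^*$ to the Hodge decomposition (\ref{Hodge-j}) and noting $d^*d^*\beta_\eps = 0$ and $d^*\xi_\eps = 0$, I conclude that $d^*j(u_\eps) = -\Delta\theta_\eps$, so
\begin{equation} \label{plan:Poisson}
 -\Delta\theta_\eps = (f_\eps, iu_\eps)_g - (\Sh^2 u_\eps, iu_\eps)_g \qquad \textrm{on } M.
\end{equation}

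\medskip
For part (i), the right-hand side of (\ref{plan:Poisson}) is uniformly bounded in $L^2(M)$: the first term by $\|f_\eps\|_{L^2}\|u_\eps\|_{L^\infty}\leq C_0\|f_\eps\|_{L^2}$ thanks to (\ref{hp:H1bounds-Linfty-bis}), and the second by $\|\Sh\|_{L^\infty(M)}^2 C_0^2$ since $\Sh$ is smooth on the closed surface $M$. Together with the renormalisation (\ref{Hodge-j-average}), standard elliptic regularity gives $\|\theta_\eps\|_{H^2(M)}\leq C$, and Rellich--Kondrachov yields relative compactness in $H^1(M)$.

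\medskip
For part (iii), fix $\theta\in\Lambda$ and a subsequence with $\theta_{\eps_j}\to\theta$ in $H^1(M)$. Along a further subsequence, (\ref{convu0}) and (\ref{hp:H1bounds-Linfty-bis}) give $u_{\eps_j}\to u_0$ strongly in $L^q(M)$ for every $q<+\infty$, with $|u_0|_g = 1$ a.e. Since $f_{\eps_j}\to 0$ in $L^2$ and the extrinsic term converges in every $L^q$ by dominated convergence, passing to the limit in (\ref{plan:Poisson}) yields $-\Delta\theta = -(\Sh^2 u_0, iu_0)_g$. Crucially, the right-hand side is bounded in $L^\infty(M)$ by a constant depending only on $\|\Sh\|_{L^\infty(M)}$ --- \emph{independently} of the choice of $\theta\in\Lambda$ --- because $|u_0|_g = 1$ a.e. Calderon--Zygmund regularity and the renormalisation $\int_M \theta\,\Vg = 0$ then give $\|\theta\|_{W^{2,q}(M)}\leq C(q,M)$ for any $q<+\infty$, and Sobolev embedding in dimension $2$ produces a uniform $C^{1,\alpha}$ bound on $\theta$ for each $\alpha<1$. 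Arzelà--Ascoli delivers a compact set
\[
 K_* := \left\{\psi\in C^1(M)\colon \|\psi\|_{C^{1,1/2}(M)}\leq C_*,\ \int_M \psi\,\Vg = 0\right\} \csubset C^1(M)
\]
containing $\Lambda$.

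\medskip
Part (ii) is a diagonal argument: if $\theta_k\in\Lambda$ with $\theta_k\to\theta$ in $C^1(M)$ (hence in $H^1(M)$), then for each $k$ I choose $\eps_j^{(k)}\downarrow 0$ witnessing $\theta_{\eps_j^{(k)}}\to\theta_k$ in $H^1$, and select $\tilde\eps_k:=\eps_{j_k}^{(k)}$ with $\tilde\eps_k<1/k$ and $\|\theta_{\tilde\eps_k} - \theta_k\|_{H^1}<1/k$; the triangle inequality gives $\theta_{\tilde\eps_k}\to\theta$ in $H^1$ and thus $\theta\in\Lambda$. The delicate point of the whole argument is the regularity gain in part (iii): the $L^\infty$ bound on the limit right-hand side rests entirely on the saturation $|u_0|_g = 1$ provided by the vorticity and energy assumptions (\ref{hp:H1bounds-flat-bis})--(\ref{hp:H1bounds-energy-bis}), and it is precisely this saturation that converts the \emph{a priori} $H^1$-compactness of the sequence into $C^1$-compactness of the cluster set.
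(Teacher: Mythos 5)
Your proof is correct and follows essentially the same route as the paper: you derive the same Poisson equation $-\Delta\theta_\eps = (f_\eps - \Sh^2 u_\eps, \, iu_\eps)_g$ (the paper obtains the identity $\d^* j(u_\eps) = (-\Delta_g u_\eps, \, iu_\eps)_g$ by citing an auxiliary lemma rather than recomputing it), get the uniform $H^2$ bound for (i), pass to the limit equation $-\Delta\theta = -(\Sh^2 u_0, \, iu_0)_g$ with uniform $W^{2,q}$ bounds for (iii), and use a diagonal argument plus the continuous embedding $C^1(M)\hookrightarrow H^1(M)$ for (ii). The only cosmetic difference is that you conclude (iii) via $C^{1,\alpha}$ bounds and Arzel\`a--Ascoli, whereas the paper invokes the compact embedding $W^{2,q}(M)\hookrightarrow C^1(M)$ for $q>2$.
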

\begin{proof}
 We claim that~$\theta_\eps$ satisfies an elliptic equation;
 the lemma will follow by elliptic regularity theory.
 Let~$\psi\colon M\to\R$ be a smooth test function.
 We test Equation~\eqref{inhomGL} aginst~$i u_\eps \psi$:
 \[
  \int_{M}\left(-\Delta_g u_{\eps}, iu_{\eps}\right)_g  \psi \, \Vg 
   +\int_{M}\left(\Sh^2 u_{\eps},iu_{\eps}\right)_g \psi \, \Vg = 
  \int_{M}(f_\eps, iu_\eps)_g \, \psi \, \Vg
 \]
 and thus, thanks to \cite[Lemma A.1]{Ag-Reno}, 
 \[
  \int_{M}\psi \, \d^* j(u_\eps) 
  = \int_{M}(f_\eps, iu_\eps)_g \, \psi \, \Vg 
   -\int_{M}\left(\Sh^2 u_{\eps}, iu_{\eps}\right)_g \psi \, \Vg.
 \]
 Since $\d^* j(u_\eps) = \d^*\d \theta_\eps = -\Delta \theta_\eps$, 
 we deduce that $\theta_\eps$ solves 
 \begin{equation}
 \label{eq:thetaeps}
 -\Delta \theta_\eps = \left(f_\eps-\Sh^2 u_{\eps},iu_{\eps}\right)_g
 %- \fint_{M}\left(f_\eps - \Sh^2 u_{\eps},iu_{\eps}\right).
 \end{equation}
 As $f_\eps$ is bounded in~$L^2(M)$ and~$u_\eps$
 is bounded in~$L^\infty(M)$, by assumption,
 the right hand side of~\eqref{eq:thetaeps}
 is bounded in $L^2(M)$. Therefore, elliptic regularity 
 implies that $\theta_\eps$ is bounded in $H^2(M)$ and hence,
 (i) follows, by compact Sobolev embedding.
 Let~$\theta\in \Lambda$ be a cluster point for the
 sequence~$\theta_\eps$ in~$H^1(M)$. By passing to the limit
 in~\eqref{eq:thetaeps} and recalling that~$f_\eps\xrightarrow{\eps\to 0} 0$
 strongly in $L^2(M)$, we obtain
 \begin{equation}
 \label{eq:theta0}
  -\Delta \theta = -\left(\Sh^2 u_0,iu_0\right)_g.
 \end{equation}
 where~$u_0$ is a cluster point for the~$u_\eps$ in~$W^{1,p}(M)$
 (see~\eqref{convu0}). The right-hand side of Equation~\eqref{eq:theta0}
 belongs to~$L^\infty(M)$, and its~$L^\infty(M)$-norm is bounded
 in terms of~$M$, $C_0$ only. By elliptic regularity,
 we deduce that~$\theta\in W^{2,q}(M)$ for any~$q < +\infty$
 (with uniform bounds on the~$W^{2,q}(M)$-norm). 
 The property~(iii) follows from the compact 
 Sobolev embedding~$W^{2,q}(M)\hookrightarrow C^1(M)$,
 for~$2< q < +\infty$. To complete the proof of the lemma, 
 it only remains to show that~$\Lambda$ is closed in~$C^1(M)$.
 A diagonal argument shows that the set of cluster points~$\Lambda$ 
 is closed in~$H^1(M)$; as~$C^1(M)$ embeds continuously in~$H^1(M)$
 it follows that~$\Lambda$ is closed in~$C^1(M)$.
\end{proof}

\begin{remark} \label{rk:thetaeps}
 Equation~\eqref{eq:thetaeps} implies that~$(\theta_\eps)_{\eps > 0}$
 is bounded in~$H^2(M)$ and hence, relatively compact not only in~$H^1(M)$,
 but also in~$W^{1,q}(M)$ for any~$q <+\infty$.
\end{remark}

\begin{proof}[Proof of Proposition~\ref{prop:gammaliminf_extr}]
 We extract a (non-relabelled) subsequence such that~$u_\eps$
 converges to a limit vector field~$u_0$, as in~\eqref{convu0}.
 Let~$\theta_\eps$ be defined as in~\eqref{Hodge-j}--\eqref{Hodge-j-average}.
 
 \setcounter{step}{0}
 \begin{step}[Proof of~\eqref{eq:hodgeu0}]
  The convergence of $u_\eps$, Equation~\eqref{convu0},
  together with~\eqref{hp:H1bounds-Linfty-bis} implies that 
  \[
   j(u_\eps)\to j(u_0) \qquad \textrm{strongly}\quad \textrm{in}\quad L^p(M).
  \]
  Moreover, since the the projection $\mathbb{P}_H$ on the harmonic $1$-forms is linear and bounded from $L^1(M; \, T'M)$ to $\Harm^1(M)$, we get that 
  \begin{equation}
   \label{eq:proj_harmu0}
   \xi_\eps := \mathbb{P}_H j(u_\eps)\to \mathbb{P}_{H}j(u_0)=:\xi
  \end{equation}
%   where the convergence is strong since $\Harm^1(M)$ is finite dimensional.
  On the other hand, since the projection~$\mathbb{P}_{\d}$ 
  on the exact forms is linear and bounded as an operator 
  \[
   \mathbb{P}_\d\colon L^p\left(M; \, T'M\right)\to L^p(M),
   \qquad \mathbb{P}_\d (j(u_\eps)):= \d \theta_\eps,
  \]
  for $p\in (1, \, +\infty)$ (see \cite{scott95}), we obtain
  \begin{equation}
  \label{eq:proj_exactu0}
   \d \theta_\eps = \mathbb{P}_{\d} j(u_\eps)\to \mathbb{P}_{\d} j(u_0) = \d \theta,
  \end{equation}
  strongly in $L^p(M)$. 
  Finally, the convergence of~$j(u_\eps)$ and of
  the vorticity~$\omega(u_\eps)=\d j(u_\eps)  + \kappa \, \Vg$ 
  (see \eqref{hp:H1bounds-flat-bis}) allows to identify 
  \begin{equation}
  \label{eq:dju0}
   \d j(u_0) = 2\pi\sum_{j=1}^n d_j \delta_{a_j} - \kappa \, \Vg.
  \end{equation}
  Equation~\eqref{eq:hodgeu0} now follows from~\eqref{eq:proj_harmu0}, \eqref{eq:proj_exactu0} and~\eqref{eq:dju0}. Indeed,
  the Hodge decomposition for $j(u_0)$ reads, 
  thanks to \eqref{eq:proj_harmu0} and \eqref{eq:proj_exactu0},
  \[
   j(u_0) = \d \theta + \d^*\beta + \xi,
  \]
  for some $2$-form $\beta$. Due to~\eqref{eq:dju0}, the $2$-form~$\beta$
  (which can be taken with zero mean) has to satisfy 
  \[
   \d\d^*\beta = -\Delta \beta = 2\pi\sum_{j=1}^n d_j \delta_{a_j} -\kappa \, \Vg, 
  \]
  and thus $\beta=\Psi$. 
 \end{step}
 
 \begin{step}[Proof that~$\xi\in\mathcal{L}(\a, \, \db)$ and 
 that~$\theta$ is a critical point of~$\GG$]
  For any $\eps>0$ we introduce the following 
  sequence of smooth vector fields 
  \[
   u_\eps^{*}:= e^{-i\theta_\eps}u_\eps. 
  \]
  Since (see \cite[Section 10.2]{JerrardIgnat_full}) 
  \begin{align*}
   \abs{D u_\eps^*}^2_g &= \abs{D u_\eps}^2_g + \abs{u_\eps}_g^2\abs{\d \theta_\eps}^2_g -2(j(u_\eps),\d \theta_\eps)_g\\ 
   & = 
   \abs{D u_\eps}^2_g +\left(\abs{u_\eps}^2-2\right)\abs{\d \theta_\eps}^2_g,
  \end{align*}
  and $\norm{\theta_\eps}_{H^1(M)}\le C$, we conclude that 
  \begin{align}
  \label{eq:energy_boundu*}
   F_{\eps}^{\intr}(u^*_{\eps}) &= F^{\extr}_{\eps}(u_\eps) + \int_{M}\left(\abs{u_\eps}^2-2\right)\abs{\d \theta_\eps}^2_g\Vg \nonumber\\
   &\stackrel{\eqref{hp:H1bounds-Linfty-bis}-\eqref{hp:H1bounds-energy-bis}}\le n\pi \abs{\log\eps} + C.
  \end{align}
  By applying compactness results for the intrinsic Ginzburg-Landau energy
  (see, e.g., \cite[Lemma 4.3, Corollary 4.5, Lemma~4.9]{Ag-Reno}),
  we find a vector field $u^*\in W^{1,p}_{\tang}(M)$
  with $\abs{u^*}_g=1$ a.e.~in~$M$ 
  and a (non-relabelled) subsequence such that 
  \[
   u_\eps^*\to u^*\qquad \textrm{strongly in } W^{1,p}_{\tang}(M) 
   \qquad \textrm{ and in } L^q_{\tang}(M) 
  \]
  for any~$p\in (1, \, 2)$, $q\in (1, \, +\infty)$.
%   As the map~$u\mapsto j(u)$ is sequentially continuous with respect 
%   to weak convergence in~$W^{1,p}(M)$, we deduce
  This implies
  \[
   j(u_{\eps}^*)\to j(u^*) \qquad \textrm{in } L^{p}(M)
  \]
  for any~$p\in (1, \, 2)$.
  Since $u_\eps\to u_0$ in $W^{1,p}_{\tang}(M)$ and in $H^1_{\mathrm{loc}}(M\setminus\{a_1, \, \ldots, \, a_n\})$ and $\theta_\eps\to \theta$ in $H^1(M)$ we obtain  
  \[
   u_\eps^{*}\to e^{-i\theta}u_0\qquad \textrm{weakly in } W^{1,p}(M)
   \textrm{ and in } H^1_{\mathrm{loc}}(M\setminus\{a_1, \, \ldots, \, a_n\}),
  \]
  and therefore $u^* = e^{-i\theta}u_0$. By direct computation, 
  \begin{equation}
  \label{eq:currentu*}
   j(u^*) = j(u_0) -\d \theta.  
  \end{equation}
  Therefore, 
  \begin{equation}
  \label{eq:vorticityu*W11}
   \omega(u_\eps^*) \xrightarrow{\eps\to 0}\d j(u^*) +\kappa \, \Vg 
   = 2\pi\sum_{j=1}^n d_j \delta_{a_j} \qquad 
   \textrm{in } W^{-1,1}(M).
  \end{equation}
  The vector field $u^*$ satisfies $\abs{u^*}=1$ a.e.~in $M$, 
  \begin{align}
   &\d j(u^*) = 2\pi \sum_{j=1}^n d_j \delta_{a_j} -\kappa \,\Vg,\label{eq:vorticityu*}\\
   &\d^* j(u^*) = 0. 
   \label{eq:covorticityu*}
  \end{align}
  Therefore, $u^*$ is a canonical harmonic vector field for~$(\a,\db,\xi)$.
  This implies~$\xi\in \mathcal{L}(\a, \, \db)$
  (see \cite[Theorem 2.1]{JerrardIgnat_full}). Moreover,
  as~$u_0 = e^{i\theta}u^*$, the equation~\eqref{eq:theta0} is
  indeed the Euler-Lagrange equation for the energy $\GG(u^*, \, \cdot)$ 
  defined in \eqref{G}. In other words, Equation~\eqref{eq:theta0}
  may be rewritten as
  \[
   \partial_\theta \GG(u^*, \, \theta) =0.
  \]
  and hence, $\theta$ is a critical point 
  (albeit not necessarily a minimiser) of~$\GG(u^*, \, \cdot)$.
  As the canonical harmonic field for~$(\a, \, \db, \, \xi)$
  is unique up to a global rotation \cite[Theorem~2.1]{JerrardIgnat_full},
  there exists a constant~$\kappa\in\R$ such that 
  $u^* = e^{i\kappa} u^*[\a, \, \db, \, \xi]$
  (where~$u^*[\a, \, \db, \, \xi]$ is identified 
  by~\eqref{canonicalvf}, \eqref{pointvalue}.
  Upon replacing~$\theta$ with~$\theta + \kappa$,
  we may even assume without loss of generality
  that~$u^* = u^*[\a, \, \db, \, \xi]$, for rotating~$u^*$
  by a constant angle is equivalent to shifting~$\theta$
  by an additive constant (by~\eqref{G-symmetry}).
 \end{step}
 
 \begin{step}[Proof of~\eqref{eq:gammalinf_extr}]
  We have
  \begin{equation}
  \label{eq:gamma_conv1}
   \begin{split}
    F_\eps^{\extr}(u_\eps) = F_{\eps}^{\intr}(u^{*}_\eps) 
    &+ \frac{1}{2}\int_{M}\abs{u_\eps^{*}}^2_{g}\abs{\d \theta_\eps}^2_g\Vg + \frac{1}{2}\int_{M}\abs{\Sh\left(e^{i\theta_\eps} u_\eps^*\right)}^2_{g}\Vg\\
    &+ \int_{M}\left(j(u_\eps^*),\d \theta_\eps\right)_g\Vg.
   \end{split}
  \end{equation}
  First of all, we observe that~$u_\eps^*$ satisfies 
  the hypothesis of~\cite[Theorem 2.5]{JerrardIgnat_full},
  which gives
  \begin{equation}
  \label{eq:gammaliminf_intr}
   \liminf_{\eps\to 0}\left(F_{\eps}^{\intr}(u^{*}_\eps) - n\pi\abs{\log\eps}\right)
   \ge W^{\intr}(\a,\db,\xi) + n\gamma.
  \end{equation}
  Moreover, the energy estimate~\eqref{eq:energy_boundu*} implies
  \[
   \int_M\left(\abs{u_\eps^*}^2 -1\right)^2 \Vg \le C\eps\abs{\log\eps}
  \]
  and hence,
  \[
   \abs{u_\eps^*}^2\xrightarrow{\eps\to 0}1
   \qquad \textrm{ strongly in  } L^2(M)
  \]
  Keeping in mind that~$\theta_\eps\xrightarrow{\eps\to 0}\theta$
  strongly in~$H^1(M)$, we can pass to the limit in the second 
  term in \eqref{eq:gamma_conv1} and obtain 
  \begin{equation}
  \label{eq:second_term}
   \lim_{\eps\to 0}\frac{1}{2}\int_{M}\abs{u_\eps^{*}}^2_{g}\abs{\d \theta_\eps}^2_g\Vg
   = \frac{1}{2}\int_{M}\abs{\d \theta}^2_g\Vg.
  \end{equation}
  The strong $L^2_{\tang}(M)$-convergence of $u_\eps=e^{i\theta_\eps} u_\eps^*$
  to $u_0= e^{i\theta} u^*$ implies that 
  \begin{equation}
  \label{eq:third_term}
   \lim_{\eps\to 0}\int_{M}\abs{\Sh\left(e^{i\theta_\eps} u_\eps^*\right)}^2_{g}\Vg
   =\int_{M}\abs{\Sh\left(e^{i\theta} u^*\right)}^2_{g}\Vg. 
  \end{equation}
  Finally, since $\d\theta_\eps$ converges strongly in $L^q(M)$ 
  for any $q\in [1, \, +\infty)$ (see Remark~\ref{rk:thetaeps}), we have
  \begin{equation}
  \label{eq:last_term}
   \lim_{\eps\to 0}\int_M\left(j(u_\eps^*),\d \theta_\eps\right)_g\Vg = 
   \int_M\left(j(u^*),\d \theta\right)_g\Vg,
  \end{equation}
  thanks to the continuity of the product of a weakly and a strongly convergent sequence in~$L^p$. 
  Therefore, thanks to \eqref{eq:hodgeu0} and \eqref{eq:currentu*} we have that 
  \[
   \int_{M}\left(j(u^*),\d \theta\right)_g \Vg = 
   \int_{M}\left(j(u_0),\d\theta\right)_g \Vg-\int_{M}\abs{\d\theta}_g^2\Vg= 0.
  \]
  Thus, taking the $\liminf_{\eps\to 0}$ in \eqref{eq:gamma_conv1} we readily get  
  \eqref{eq:gammalinf_extr}.
  \qedhere
 \end{step}
\end{proof}

\begin{prop} \label{prop:liminfgrad}
 Let~$u_\eps\in H^1_{\tang}(M)$ be a sequence 
 of solutions of~\eqref{inhomGL}
 that satisfies~\eqref{hp:H1bounds-flat-bis}, 
 \eqref{hp:H1bounds-Linfty-bis}, \eqref{hp:H1bounds-energy-bis}.
 Assume that~$f_\eps\xrightarrow{\eps\to 0}0$ in~$L^2_{\tang}(M)$.
 Assume moreover that~$\xi_\eps:=\P_H j(u_\eps)\xrightarrow{\eps\to 0}\xi$
 and, upon extraction of a subsequence, that 
%  the functions~$\theta_\eps$
%  defined in~\eqref{Hodge-j}--\eqref{Hodge-j-average} satisfy
%  $\theta_\eps\xrightarrow{\eps\to 0}\theta$ in~$H^1(M)$.
 $u_\eps\xrightarrow{\eps\to 0} u_0 := e^{i\theta} u^*[\a, \, \db, \, \xi]$
 in~$W^{1,p}(M)$, where~$u^*[\a, \, \db, \, \xi]$ 
 is given by~\eqref{canonicalvf}--\eqref{pointvalue} and~$\theta\in H^1(M)$
 is a critical point of~$\GG(u^*[\a, \, \, \db, \, \xi], \, \cdot)$.
 Then, we have
 \begin{equation*} 
  \begin{split}
   \liminf_{\eps\to 0} \frac{\abs{\log\eps}}{2}
    \int_{M}\abs{ -\Delta_g u_\eps 
     + \frac{1}{\eps^2}(\vert u_\eps\vert^2-1)u_\eps}^2_{g}\,\Vg
     \geq  \frac{1}{2\pi} \sum_{j=1}^n
     \abs{\nabla_{a_j} W(\a, \, \mathbf{d}, \, \xi, \,\theta)}_g^2
  \end{split}
 \end{equation*}
\end{prop}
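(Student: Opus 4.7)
The plan is to bound the nonlinear residual
\[
J_\eps(u_\eps) := -\Delta_g u_\eps + \frac{1}{\eps^2}(\abs{u_\eps}^2_g - 1) u_\eps
\]
from below by duality: test $J_\eps(u_\eps)$ against a carefully chosen inner-variation vector field supported near the vortices, then apply Cauchy-Schwarz with a sharp $L^2$ control of the test field. For each $j\in\{1,\ldots,n\}$, choose a smooth, divergence-free vector field $e_j$ on a neighbourhood $U_j\ni a_j$ with $e_j(a_j) = \nabla_{a_j} W(\a,\db,\xi,\theta)$ and $\abs{\nabla e_j}=\O(r)$ in normal coordinates at $a_j$ (e.g.\ the divergence-free correction of the geodesic extension of $\nabla_{a_j}W$). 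Fix $\eta_0>0$ so that the balls $\overline{B}_{\eta_0}(a_j)$ are pairwise disjoint and contained in the respective $U_j$.

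\textbf{Pohozaev identity and passage to the limit.} For $0<\eta<\eta_0$, the stress-energy identity arising from the inner variation $u\mapsto u\circ\phi^{e_j}_s$ (adapted as in the proof of~\cite[Lemma~3.11]{Ag-Reno}) takes the form
\[
\int_{B_\eta(a_j)} (J_\eps(u_\eps), D_{e_j} u_\eps)_g \, \Vg = I^\eps_\eta(u_\eps) + R^j_\eps(\eta),
\]
where $I^\eps_\eta(u_\eps)$ is the $\eps$-analogue of the boundary integral in Proposition~\ref{prop:gradient_reno} (including the potential term $-\tfrac{1}{4\eps^2}\int_{\partial B_\eta(a_j)}(\abs{u_\eps}^2_g-1)^2(\nu,e_j)_g$) and $R^j_\eps(\eta)$ is the bulk stress-energy remainder. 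Combining $\abs{\nabla e_j}=\O(r)$ with the standard vortex bounds (energy density $\lesssim r^{-2}$ on $B_\eta\setminus B_{C\eps}$ and $\O(1)$ contribution in the core), one checks that $R^j_\eps(\eta)=\O(\eta)$ uniformly in $\eps$. By the hypothesis $u_\eps\to u_0$ in $H^1_{\mathrm{loc}}(M\setminus\{a_1,\ldots,a_n\})$ and the fact that $\eps^{-2}(\abs{u_\eps}^2_g-1)^2\to 0$ in $L^1_{\mathrm{loc}}$ off the vortex set (as shown in the proof of Proposition~\ref{prop:gammaliminf_extr}), $I^\eps_\eta(u_\eps)\to I_\eta(u_0)$ for a.e.\ $\eta$; summing over $j$ and letting $\eta\to 0$, Proposition~\ref{prop:gradient_reno} yields
\[
\lim_{\eta\to 0}\lim_{\eps\to 0}\sum_{j=1}^n\int_{B_\eta(a_j)} (J_\eps(u_\eps), D_{e_j} u_\eps)_g\, \Vg = \sum_{j=1}^n \abs{\nabla_{a_j} W(\a,\db,\xi,\theta)}_g^2.
\]

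\textbf{Cauchy-Schwarz with a sharp test-field bound.} Since the balls $B_\eta(a_j)$ are disjoint,
\[
\Bigl|\sum_{j=1}^n\int_{B_\eta(a_j)}(J_\eps(u_\eps),D_{e_j} u_\eps)_g\Vg\Bigr|^2\le \norm{J_\eps(u_\eps)}_{L^2(M)}^2\cdot \sum_{j=1}^n\int_{B_\eta(a_j)}\abs{D_{e_j} u_\eps}_g^2\,\Vg.
\]
I would then establish the sharp estimate
\[
\limsup_{\eps\to 0}\frac{1}{\abs{\log\eps}}\int_{B_\eta(a_j)}\abs{D_{e_j}u_\eps}_g^2\,\Vg \;\le\; \pi\abs{e_j(a_j)}^2 + o_{\eta\to 0}(1)
\]
by using the expansion $u^*[\a,\db,\xi]\sim e^{id_j\phi}w_0$ at $a_j$ in geodesic normal coordinates, which forces $Du_\eps\sim d_j r^{-1}(\cdot,\hat\phi)\,iu_\eps$ to leading order on the annulus $B_\eta\setminus B_{C\eps}$; the angular identity $\int_0^{2\pi}(e_j(a_j),\hat\phi)^2\,\d\phi=\pi\abs{e_j(a_j)}^2$ together with $\abs{d_j}=1$ then produces the constant $\pi$, while the core contributes $\O(1)$. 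Squaring the previous displayed inequality, inserting the earlier limit and rearranging yields $\tfrac{1}{2\pi}\sum_j\abs{\nabla_{a_j}W}_g^2\le\liminf_\eps\tfrac{\abs{\log\eps}}{2}\norm{J_\eps(u_\eps)}^2_{L^2(M)}$, which is the claim.

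\textbf{Main obstacle.} The crux is the sharp constant $\pi$ (rather than $2\pi$) in the $L^2$ bound on $D_{e_j}u_\eps$. The naive estimate $\abs{D_{e_j}u_\eps}^2_g\le\abs{e_j(a_j)}^2\abs{Du_\eps}^2_g$ combined with the standard vortex bound $\int_{B_\eta(a_j)}\abs{Du_\eps}^2_g\le 2\pi\abs{\log\eps}+\O(1)$ produces only the weaker inequality with $\tfrac{1}{4\pi}$ in place of $\tfrac{1}{2\pi}$. Recovering the sharp factor requires exploiting the angular directionality of $Du_\eps$ near the vortex and a quantitative $H^1$-comparison of $u_\eps$ with its leading profile $e^{i\theta}e^{id_j\phi}w_0$, uniform in $\eps$ on annular shells $B_\eta\setminus B_{C\eps}$. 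A secondary subtlety is the treatment of the bulk remainder $R^j_\eps(\eta)$: the choice of $e_j$ with $\abs{\nabla e_j}=\O(r)$ is essential, since otherwise the $\abs{\log\eps}$-sized energy density near $a_j$ would make the bulk term unbounded.
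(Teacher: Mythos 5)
Your proposal is correct in outline and follows essentially the same route as the paper's proof: a localized stress-energy (Pohozaev) identity at each vortex, passage to the limit $\eps\to 0$ with identification of the boundary term through Proposition~\ref{prop:gradient_reno}, and Cauchy--Schwarz against $D_{e}u_\eps$ with the sharp directional bound $\int_{B_\eta(a_j)}\abs{D_e u_\eps}^2_g\le \left(\pi\abs{e(a_j)}^2+\mathrm{o}(1)\right)\abs{\log\eps}$ --- which is exactly the content the paper imports wholesale from \cite[Proposition~5.9]{Ag-Reno} instead of re-proving it. Your departures are cosmetic (choosing $e_j(a_j)=\nabla_{a_j}W$ and summing over the disjoint balls before Cauchy--Schwarz, rather than testing one unit direction at a time and taking a supremum at the end), so the ``main obstacle'' you flag, the sharp constant $\pi$ in place of $2\pi$, is precisely the cited intrinsic estimate.
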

\begin{proof}
%  First of all, we extract a subsequence~$\eps_h\to 0$
%  in such a way that 
% %  $u_{\eps_h}$ converges to $u_0 = e^{i\theta}u^*$
% %  where $u^*$ is a harmonic vector field and
%  \[
%   \begin{split}
%    &\liminf_{\eps\to 0} \frac{\abs{\log\eps}}{2}
%     \int_{M}\abs{ -\Delta_g u_\eps 
%      + \frac{1}{\eps^2}(\vert u_\eps\vert^2-1)u_\eps + \Sh^2 u_\eps}^2_{g} \,\Vg \\
%    &\qquad\qquad = \lim_{h\to +\infty} \frac{\abs{\log\eps_h}}{2}
%     \int_{M}\abs{ -\Delta_g u_{\eps_h} 
%      + \frac{1}{\eps_h^2}(\vert u_{\eps_h}\vert^2-1)
%      u_{\eps_h} + \Sh^2 u_{\eps_h} }^2_{g}\,\Vg
%   \end{split}
%  \]
% %
%  We will then be able to extract further subsequences later on.
%  From now on, we write~$\eps$ instead of~$\eps_h$.
 Let~$j\in\{1, \, \ldots, \, n\}$ be fixed.
 For~$\eta>0$ small enough,
 we consider the geodesic ball $B_\eta(a_j)(a_j)$ centered at $a_j$. 
 We denote with $\nu$ its outward normal. 
 By reasoning as in~\cite[Proposition 5.9]{Ag-Reno} it is not difficult to obtain that 
\begin{equation} \label{eq:liminfgrad}
 \begin{split}
  L_j &:= \liminf_{\eps\to 0}\frac{\vert \log\eps\vert}{2}\int_{B_\eta(a_j)}\abs{ -\Delta_g u_\eps + \frac{1}{\eps^2}(\vert u_\eps\vert_g^2-1)u_\eps + \Sh^2 u_\eps}^2_{g}\,\Vg \\
  &\ge  \frac{1}{2\pi}\abs{\frac{1}{2}\int_{\partial B_\eta(a_j)}\vert D u_0\vert^2_g (\nu,e_k)_g\, \d\H^1 - \int_{\partial B_\eta(a_j)}(D_{e_k} u_{0}, D_{\nu}u_0)_g\right.\, \d\H^1
  \\
  &\qquad\qquad + \left.\int_{B_\eta(a_j)}(D u_0,R(\cdot,e_k)u_0)_g\,\Vg + \int_{B_\eta(a_j)}(\Sh^2 u_0,D u_0)_g \,\Vg}^2,
 \end{split}
\end{equation}
where $R$ denotes the Riemannian curvature tensor (see \cite[Lemma A.3]{Ag-Reno}).
Therefore, since the vector field $u_0\in W^{1,p}_{\tang}(M)\cap L^\infty_{\tang}(M)$ for $p\in [1,2)$ and the curvature tensor and the shape operator are smooth, we readily have that   
\[
\lim_{\eta\to 0}\left(\int_{B_\eta(a_j)}(D u_0,R(\cdot,e_k)u_0)_g  \,\Vg +\int_{B_\eta(a_j)}(\Sh^2 u_0,D u_0)_g \,\Vg\right) =0. 
\] 
Thanks to Proposition~\ref{prop:gradient_reno} 
% (see in particular \eqref{eq:gradient_reno_abs}),
we have
\begin{equation*}
 \begin{split}
%   \liminf_{\eps\to 0}\frac{\vert \log\eps\vert}{2}\int_{B_{\eta}}\abs{ -\Delta_g u_\eps + \frac{1}{\eps^2}(\vert u_\eps\vert_g^2-1)u_\eps + \Sh^2 u_\eps}^2_{g}\,\Vg
  L_j \ge \frac{1}{2\pi} \abs{\left(\nabla_{a_j} W(\a, \, \db, \, \xi,\,\theta),
  \, {\hat{\be}}_k\right)_g}^2 + \mathrm{o}_{\eta\to 0}(1)
 \end{split}
\end{equation*}
As~$\{\hat{\be}_1, \, \hat{\be}_2\}$ is an arbitrary orthonormal basis
for~$\T_{a_j}(M)$, we also deduce
\begin{equation*}
 \begin{split}
%   \liminf_{\eps\to 0}\frac{\vert \log\eps\vert}{2}\int_{B_{\eta}}\abs{ -\Delta_g u_\eps + \frac{1}{\eps^2}(\vert u_\eps\vert_g^2-1)u_\eps + \Sh^2 u_\eps}^2_{g}\,\Vg
  L_j \ge \frac{1}{2\pi} \abs{\left(\nabla_{a_j} W(\a, \, \db, \, \xi,\,\theta),
  \, {\hat{\be}}\right)_g}^2 + \mathrm{o}_{\eta\to 0}(1)
 \end{split}
\end{equation*}
for any unit vector~$\hat{\be}\in\T_{a_j}(M)$. By taking the
supremum over~$\hat{\be}$, we obtain
\begin{equation*}
 \begin{split}
%   \liminf_{\eps\to 0}\frac{\vert \log\eps\vert}{2}\int_{B_{\eta}}\abs{ -\Delta_g u_\eps + \frac{1}{\eps^2}(\vert u_\eps\vert_g^2-1)u_\eps + \Sh^2 u_\eps}^2_{g}\,\Vg
  L_j \ge \frac{1}{2\pi} \abs{\nabla_{a_j} W(\a, \, \db, \, \xi,\,\theta)}_g^2 + \mathrm{o}_{\eta\to 0}(1)
 \end{split}
\end{equation*}
 By taking the sum over~$j$, and letting~$\eta\to 0$,
 the proposition follows.
\end{proof}

\subsection{Vortex Dynamics: Proof of Theorem \ref{th:main1}}

We consider {\itshape well-prepared initial conditions}.
To ease the reading, we recall the definition.
Given~$n\in\Z$, $n\geq 1$, we 
consider~$(\a^0, \, \db)\in\mathscr{A}^n$ 
such that $d_j=\pm 1$ for any $j=1,\ldots, n$
and $\xi^0\in \mathcal{L}(\a^0, \, \db)$.
Finally, we let $\theta^0\in H^1(M)$ 
% (condizioni su $\theta^0$? non chiediamo che sia la componente esatta nella decomposizione di hodge del campo limite di $u_\eps^0$?). 
We assume that the initial conditions
$u^0_{\eps}\in H^1_{\tang}(M)$ satisfy
\begin{align}
 & \omega (u^{0}_\eps)\xrightarrow{\eps \to 0} 2\pi \sum_{j=1}^n d_j \delta_{a_j^{0}} \qquad \hbox{ in } W^{-1,p}(M) \quad
 \textrm{for any } p\in (1, \, 2) \label{eq:initial_vorticity}\\
 & F_\eps^{\extr}(u_\eps^{0}) = \pi n \abs{\log\eps}
%  + W^{\intr}(\a^0, \, \db, \, \xi^0) + \GG(\a^0,\,\db,\,\xi^0, \, \theta^0)
 + W(\a^0, \, \db, \, \xi^0, \, \theta^0)
 + n\gamma + \mathrm{o}_{\eps\to 0}(1)\label{eq:well_prepared}, \\
 &  \norm{u^0_\eps}_{L^\infty(M)} \leq 1.\label{eq:initial_Linfty}
\end{align} 
As we did in Section~\ref{sect:G},
we fix a sufficiently small neighbourhood~$\mathscr{U}$
of~$\a^0$ in~$M^n$, a point~$b_0\in M$ such that $b^0\neq a_k$ for 
any~$\a = (a_1, \, \ldots, \, a_n)\in\mathscr{U}$ and any index~$k$,
and a tangent vector~$w^0\in T_{b^0} M$. 
For any~$\a\in\mathscr{U}$ and any~$\xi\in\mathcal{L}(\a, \, \db)$
we consider the reference canonical harmonic 
vector field~$u^*[\a, \, \db, \, \xi]$ 
that satisfies $u^*[\a, \, \db, \, \xi](b^0) = w^0$.

The existence of a smooth solution~$u_\eps$ of~\eqref{eq:GL_intro}, 
for any $\eps>0$, is a consequence of standard parabolic theory. 
Moreover, $u_\eps$ satisfies the following a priori estimates
(see e.g.~\cite[Lemma 5.6]{Ag-Reno} for the proof). 
\begin{lemma}
\label{lem:lemma1}
Let $u_\eps$ be a solution of \eqref{eq:GL_intro} with $u_{\eps}^0$ satisfying 
\eqref{eq:well_prepared} and~\eqref{eq:initial_Linfty}.
%\begin{equation}
%\label{eq:initial_energy_lemma}
%F_\eps(u_0) = \frac{1}{2}\int_{M}\vert D u_0\vert^2 + \frac{1}{2\eps^2}(\vert u_0\vert^2-1)^2 \Vg \le \pi d\vert\log\eps\vert + C.
%\end{equation}
Then, 
\begin{gather}
 \norm{u_\eps}_{L^\infty(M\times(0,T))}\le 1, \label{eq:boundLinfty} \\[5pt]
 F_\eps^{\extr}(u_\eps(t)) \le \pi n \abs{\log\eps} + C
  \qquad \hbox{ for a.a. }t\in [0,T] \label{eq:energy_bound1_lemma} \\
 \int_{0}^{T} \int_{M}\vert \partial_t u_\eps\vert^2_g \, \Vg \, \d t \le C\vert\log\eps\vert \label{eq:boundut_lemma}
\end{gather}
for some constant $C>0$ independent of $\eps$. 
\end{lemma}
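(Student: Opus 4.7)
The three estimates follow the gradient-flow strategy of the intrinsic counterpart \cite[Lemma~5.6]{Ag-Reno}; the extrinsic contribution $\Sh^2 u_\eps$ enters only through the non-negative quantity $|\Sh u_\eps|^2_g$ and so does not disturb any of the sign structure.

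For \eqref{eq:boundLinfty}, I would take the pointwise scalar product of \eqref{eq:GL_intro} with $u_\eps$, and use the Bochner/Weitzenb\"ock identity $(-\Delta_g u_\eps, u_\eps)_g = -\tfrac12\Delta|u_\eps|_g^2 + |Du_\eps|_g^2$ (immediate from $\Delta_g = -D^*D$) to obtain the scalar parabolic inequality
\begin{equation*}
 \frac{1}{2|\log\eps|}\partial_t|u_\eps|_g^2 - \frac{1}{2}\Delta|u_\eps|_g^2 + |Du_\eps|_g^2 + |\Sh u_\eps|_g^2 + \frac{1}{\eps^2}(|u_\eps|_g^2 - 1)|u_\eps|_g^2 = 0.
\end{equation*}
Multiplying by the non-negative cut-off $\phi_\eps := (|u_\eps|_g^2 - 1)_+$ and integrating over $M$, every spatial contribution is non-negative (for the Laplacian term, via integration by parts and the chain rule for positive parts); hence $\tfrac{d}{dt}\int_M\phi_\eps^2\,\Vg \le 0$, and since $\phi_\eps(\cdot, 0) \equiv 0$ by \eqref{eq:initial_Linfty}, we conclude $\phi_\eps\equiv 0$.

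For \eqref{eq:energy_bound1_lemma} and \eqref{eq:boundut_lemma}, I would exploit the gradient-flow identity
\begin{equation*}
 \frac{d}{dt}F_\eps^{\extr}(u_\eps(t)) = -\frac{1}{|\log\eps|}\int_M|\partial_t u_\eps|_g^2\,\Vg,
\end{equation*}
obtained by testing \eqref{eq:GL_intro} against $\partial_t u_\eps$. Monotonicity of $F_\eps^{\extr}$ combined with the well-preparedness \eqref{eq:well_prepared} immediately yields \eqref{eq:energy_bound1_lemma}. Integrating the identity in time gives
\begin{equation*}
 \frac{1}{|\log\eps|}\int_0^T\!\int_M|\partial_t u_\eps|_g^2\,\Vg\,\d t = F_\eps^{\extr}(u_\eps^0) - F_\eps^{\extr}(u_\eps(T)),
\end{equation*}
so \eqref{eq:boundut_lemma} reduces to proving a uniform-in-$\eps$ lower bound $F_\eps^{\extr}(u_\eps(T)) \ge \pi n|\log\eps| - C$. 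This is the delicate point, and it does not follow from monotonicity alone. I would establish it via a Jerrard--Sandier ball-construction lower bound applied to $u_\eps(T)$: thanks to $|u_\eps|\le 1$ and the upper energy bound already proved, the vorticity at time $T$ carries total charge at least $n$ (namely, the initial one), which produces the matching lower bound. The extrinsic term plays no active role, beyond being non-negative.

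The principal obstacle is exactly this last lower bound, which amounts to ruling out a loss of vortex charge on $[0,T]$. An annihilation of two opposite-sign vortices would cause an energy drop of order $|\log\eps|$ and destroy \eqref{eq:boundut_lemma}; preventing such an event requires the full short-time analysis of the vortex configuration, which is the mechanism encoded in the horizon $T^*$ of Theorem~\ref{th:main1}. Beyond $T^*$ the uniform bound \eqref{eq:boundut_lemma} is, in general, no longer valid.
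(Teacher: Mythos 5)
Your arguments for \eqref{eq:boundLinfty} and \eqref{eq:energy_bound1_lemma} are correct and are the standard ones: the maximum principle applied to $\abs{u_\eps}_g^2$ and the monotonicity of $F_\eps^{\extr}$ along the flow combined with the well-preparedness \eqref{eq:well_prepared}, the extrinsic term contributing only the non-negative quantity $\abs{\Sh u_\eps}_g^2$. Note that the paper itself gives no proof of this lemma, deferring entirely to \cite[Lemma 5.6]{Ag-Reno}, so the comparison is really with that argument, which for these two estimates proceeds exactly as you do.

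The genuine gap is in \eqref{eq:boundut_lemma}. The energy identity correctly reduces it to the lower bound $F_\eps^{\extr}(u_\eps(T)) \ge \pi n\abs{\log\eps} - C$, but the step you offer to obtain it --- ``the vorticity at time $T$ carries total charge at least $n$ (namely, the initial one)'' --- is not correct. The only quantity that is conserved is the \emph{algebraic} total degree: by \eqref{eq:vorticity_intro}, Stokes' theorem and Gauss--Bonnet, $\int_M \omega(u_\eps(t)) = 2\pi\chi(M)$ for every admissible field, i.e.\ $\sum_j d_j = \chi(M)$; the number $n=\sum_j\abs{d_j}$ of vortices of the well-prepared datum is not topologically protected. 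Consequently a ball-construction/Ignat--Jerrard lower bound applied blindly at time $T$ only yields $F_\eps^{\extr}(u_\eps(T)) \ge \pi\abs{\chi(M)}\abs{\log\eps} - C$, which matches $\pi n\abs{\log\eps}$ precisely when $n=\abs{\chi(M)}$; in that topologically minimal case your argument closes, and this is the mechanism behind the estimate. When $n>\abs{\chi(M)}$, invoking the persistence of all $n$ vortices on $[0,T]$ is circular at this stage: ruling out annihilation is an output of the dynamical analysis (the construction of $T^*$ in Proposition~\ref{prop:main_limit_proc1} and Theorem~\ref{th:main1}), not an ingredient available for this a priori lemma. Your closing paragraph in effect concedes this and ends by arguing against the statement rather than proving it; as it stands, the proposal establishes \eqref{eq:boundut_lemma} only in the case $n=\abs{\chi(M)}$, and for the general case it neither reproduces the cited argument nor supplies a substitute (e.g.\ a propagation-in-time of the sharp lower bound together with vortex separation).
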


\begin{prop}
\label{prop:main_limit_proc1}
 Let $u_\eps $ be a solution of \eqref{eq:GL_intro} with $u_{\eps}^0$ satisfying~\eqref{eq:initial_vorticity}, \eqref{eq:well_prepared} and~\eqref{eq:initial_Linfty}.
 Then, there exist $T^*\in (0,T]$, a curve 
%  ${\bf a}:[0,T^*)\to M^n$ with 
 ${\bf a}=(a_1,\ldots,a_n)\in H^1(0,T^*;M^n)$
 with~$\a(t)\in\mathscr{U}$ for any~$t\in [0, \, T^*)$,
 integers~$\mathbf{d} = (d_1, \, \ldots, \, d_n)\in\Z^n$
 with~$(\a(t),\db)\in\mathscr{A}^n$ for any $t\in [0,T^*)$,
 a curve $\xi\in H^1(0,T^*;\Harm^1(M))$,
 a non-increasing function $\varphi:[0,T)\to \R$
 and a non-relabelled subsequence $\eps\to 0$
 such that, for any $t\in [0,T^*)$,
 \begin{align}
  \label{eq:conv_vorticity_prop}
  & \omega(u_\eps(t)) \to 2\pi\sum_{j=1}^n d_j\delta_{a_j(t)}\qquad \hbox{ in } W^{-1,p}(M) \quad
  \textrm{for any } p\in (1, \, 2),\\
  & \P_H j(u_\eps(t)) \xrightarrow{\eps\to 0} \xi(t),\qquad\hbox{ with } \qquad\xi(t)\in \mathcal{L}(\a(t),\db),
  \label{eq:convergence_xi}\\
  &\liminf_{\eps\to 0}\frac{1}{\vert \log\eps\vert}\int_{0}^t \int_{M}\vert \partial_t u_\eps\vert^2_g \,  \Vg\d t\ge \pi
  %\sum_{k=1}^d
  \int_{0}^t \vert {\bf a}'(s)\vert^2_g \, \d s,
  \label{eq:liminfut} \\
  &\lim_{\eps\to 0}\left(F_\eps^{\extr}(u_\eps(t))-\pi\abs{\log\eps} -n\gamma\right) = \varphi(t).\label{eq:conv_energy}
 \end{align}
 Moreover, there exists a measurable map
 $\theta\colon (0, \, T^*)\to C^1(M)$ and, for a.e.~$t\in (0, \, T^*)$,
 a subsequence~$\eps_h(t)\to 0$ (possibly depending on~$t$)
 such that
 \begin{equation}
  u_{\eps_{h}(t)}(t)\to u_0(t) := e^{i\theta(t)} u^*[\a(t), \, \db, \, \xi(t)]
 \end{equation}
 strongly in~$W^{1,p}_{\tang}(M)$ for any~$p\in [1, \, 2)$.
 The function~$\theta(t)$ satisfies
 \begin{equation} \label{eq:euler_theta}
  -\Delta \theta(t) = -\left(\Sh(u_0(t)),\Sh(i u_0(t))\right) 
  \qquad \hbox{a.e. in } M 
 \end{equation}
 for a.e.~$t\in (0, \, T^*)$.
 Finally, for almost any $t\in [0,T^*)$ we have
 \begin{gather}
 \varphi(t) \ge
%  W^{\intr}(\a(t),\db,\xi(t)) + \GG(\theta(t),\a(t),\db, \xi(t)).
  W(\a(t), \, \db, \, \xi(t), \, \theta(t)) \label{eq:liminf_E} \\
 \begin{split}
   &\liminf_{\eps\to 0} \frac{\abs{\log\eps}}{2}
    \int_{M}\abs{ -\Delta_g u_\eps(t) 
     + \frac{1}{\eps^2}(\vert u_\eps(t)\vert^2-1)u_\eps(t)}^2_{g}\,\Vg \\
   &\hspace{3.3cm} \geq \frac{1}{2\pi} \sum_{j=1}^n
     \abs{\nabla_{a_j} W(\a(t), \, \mathbf{d}, \, \xi(t), \,\theta(t))}_g^2
  \end{split}  \label{eq:liminf_grad_E}
 \end{gather}
\end{prop}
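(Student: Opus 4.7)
The plan is to follow the strategy used for the intrinsic case in~\cite{Ag-Reno}, reducing the argument to a pointwise-in-time application of Propositions~\ref{prop:gammaliminf_extr} and~\ref{prop:liminfgrad} once the candidate vortex trajectory $\a(\cdot)$ has been produced. I would start from Lemma~\ref{lem:lemma1}, which gives $\norm{u_\eps}_{L^\infty}\le 1$, the energy bound $F_\eps^{\extr}(u_\eps(t))\le n\pi\abs{\log\eps}+C$ for a.e.~$t$, and $\int_0^T \norm{\partial_t u_\eps}_{L^2}^2 \d t \le C\abs{\log\eps}$. Rewriting~\eqref{eq:GL_intro} in the form
\begin{equation*}
 -\Delta_g u_\eps + \Sh^2 u_\eps + \frac{1}{\eps^2}(\abs{u_\eps}^2 - 1)u_\eps = f_\eps, \qquad f_\eps := -\frac{1}{\abs{\log\eps}}\partial_t u_\eps,
\end{equation*}
Fubini and the last estimate give $\norm{f_\eps}_{L^2(0,T;L^2(M))}^2 \le C/\abs{\log\eps}$, so along a (time-independent) subsequence one has $f_\eps(t)\to 0$ strongly in $L^2_{\tang}(M)$ for almost every $t$.

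To construct $\a(t)$ and the stopping time $T^*$, I would first use Jerrard--Soner vorticity compactness together with~\eqref{hp:H1bounds-energy-bis} to show that, for each fixed $t$, $\omega(u_\eps(t))$ concentrates on finitely many points with quantised masses; a diagonal extraction over a countable dense set of times produces a subsequence along which concentration holds at that dense set. Promoting this to a continuous, indeed $H^1$-regular, curve $\a(\cdot)$ is achieved through the Sandier--Serfaty product estimate applied to the parabolic flow, which simultaneously delivers~\eqref{eq:liminfut} and the time-regularity of $\a$. One then defines $T^*$ as the first time at which two vortices collide or the curve $\a$ leaves the fixed neighbourhood $\mathscr{U}$. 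The convergence $\P_H j(u_\eps(t))\to\xi(t)$ follows from the $L^1$-boundedness of the Hodge projection together with interior compactness of $u_\eps(t)$; the constraint $\xi(t)\in\mathcal{L}(\a(t),\db)$ and the $H^1$-regularity of $\xi$ are then consequences of Proposition~\ref{prop:gammaliminf_extr} combined with the smooth local parametrisation~\eqref{Xi}.

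At a.e.~$t\in(0,T^*)$ the pair $(u_\eps(t),f_\eps(t))$ satisfies the hypotheses of Propositions~\ref{prop:gammaliminf_extr} and~\ref{prop:liminfgrad}. Applying them along a (possibly $t$-dependent) subsequence produces the limit $u_0(t)=e^{i\theta(t)}u^*[\a(t),\db,\xi(t)]$ with $\theta(t)\in C^1(M)$ a critical point of $\GG(u^*[\a(t),\db,\xi(t)],\cdot)$ solving~\eqref{eq:euler_theta}, and the two liminf inequalities~\eqref{eq:liminf_E} and~\eqref{eq:liminf_grad_E}. Measurability of $t\mapsto\theta(t)$ is then secured via a Kuratowski--Ryll-Nardzewski selection, exploiting that at each $t$ the set of critical points is a closed subset of the fixed compact set $K_*\subseteq C^1(M)$ provided by Lemma~\ref{lemma:thetaeps}(iii).

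The function $\varphi$ is produced from the energy dissipation identity obtained by testing~\eqref{eq:GL_intro} against $\partial_t u_\eps$:
\begin{equation*}
 F_\eps^{\extr}(u_\eps(t)) + \frac{1}{\abs{\log\eps}}\int_0^t \norm{\partial_t u_\eps(s)}_{L^2}^2 \, \d s = F_\eps^{\extr}(u_\eps^0),
\end{equation*}
which shows that $t\mapsto F_\eps^{\extr}(u_\eps(t))-n\pi\abs{\log\eps}-n\gamma$ is monotone non-increasing; a Helly selection then yields, along a further subsequence, a pointwise limit $\varphi\colon[0,T^*)\to\R$, non-increasing by construction, which gives~\eqref{eq:conv_energy}. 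The main obstacle is the rigorous implementation of the product estimate~\eqref{eq:liminfut} in the present extrinsic setting: although the shape operator $\Sh^2 u_\eps$ contributes only a uniformly bounded lower-order term that does not affect the leading $\abs{\log\eps}$-asymptotics, its presence must be tracked carefully through the argument. A secondary subtle point is the measurable selection of $\theta(t)$, since~\eqref{eq:euler_theta} is genuinely nonlinear and may admit several solutions at each $t$.
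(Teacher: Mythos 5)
Your skeleton matches the paper's: the existence of $T^*$, $\a\in H^1(0,T^*;M^n)$, $\xi\in H^1(0,T^*;\Harm^1(M))$ and the estimate \eqref{eq:liminfut} are taken from the intrinsic theory (the paper simply invokes \cite[Proposition 5.7]{Ag-Reno}, which packages the Jerrard--Soner compactness and the product estimate you describe); $\varphi$ comes from monotonicity of the energy along the flow plus Helly, exactly as you say; and the pointwise-in-time statements are obtained by applying Propositions~\ref{prop:gammaliminf_extr} and~\ref{prop:liminfgrad} at a.e.~$t$ with $f_\eps(t)=-\partial_t u_\eps(t)/\abs{\log\eps}\to 0$ in $L^2$ along a time-independent subsequence.

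There is, however, a genuine gap in how you produce $\theta(t)$, and it affects \eqref{eq:liminf_grad_E}. As the remark following the statement makes explicit, the inferior limit in \eqref{eq:liminf_grad_E} is taken along the \emph{time-independent} sequence $\eps\to 0$, not merely along the $t$-dependent subsequence $\eps_h(t)$; this is indispensable later, when \eqref{eq:liminf_grad_E} is integrated in time via Fatou's lemma in the dissipation inequality \eqref{eq:gradfloweps}. If you define $\theta(t)$ as the limit of $\theta_{\eps_h(t)}(t)$ along an arbitrary $t$-dependent subsequence, Proposition~\ref{prop:liminfgrad} only bounds the liminf along that subsequence; along a different subsequence the phases may converge to a different critical point $\psi$ with a strictly smaller $\abs{\nabla_{\a}W(\a(t),\db,\xi(t),\psi)}$, and then the full-sequence liminf (which is the infimum over subsequential limits) would be smaller than the right-hand side you claim. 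The paper's remedy is precisely the extra structure you omit: one introduces the multifunction $\Lambda(t)$ of $H^1$-cluster points of the Hodge phases $\theta_\eps(t)$ (non-empty and compact in $C^1(M)$ by Lemma~\ref{lemma:thetaeps}), proves that $t\mapsto\Lambda(t)$ is measurable (using the uniform compact set $K_*$ and continuity in $t$ of $\theta_\eps$), and then selects $\theta(t)\in\Lambda(t)$ \emph{minimizing} $\abs{\nabla_{\a}W(\a(t),\db,\xi(t),\cdot)}$ over $\Lambda(t)$, see \eqref{minimaltheta}; with this minimal choice the full-sequence liminf is indeed bounded below as in \eqref{eq:liminf_grad_E}, while \eqref{eq:liminf_E} is unaffected because the energies converge along the whole sequence by \eqref{eq:conv_energy}. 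Relatedly, your measurable selection is performed on the wrong set: selecting among all solutions of the nonlinear Euler--Lagrange equation \eqref{eq:euler_theta} does not guarantee that the chosen $\theta(t)$ arises as a limit of $\theta_\eps(t)$, which is needed both for the convergence $u_{\eps_h(t)}(t)\to e^{i\theta(t)}u^*[\a(t),\db,\xi(t)]$ and for applying Proposition~\ref{prop:liminfgrad} with that specific $\theta(t)$; the selection must be made inside $\Lambda(t)$, whose measurability is exactly what the paper's argument (and a selection theorem as in \cite{Wagner-Survey}) establishes.
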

The inferior limit in~\eqref{eq:liminf_grad_E}
is taken not only with respect to the subsequence~$\eps_h(t)\to 0$,
but also with respect to the sequence~$\eps\to 0$
(that does not depend on~$t$).
\begin{proof}
\setcounter{step}{0}

We split the proof into steps.

\begin{step}
Since $F_\eps(u)\le F_\eps^{\extr}(u)$ for any $u\in H^1_{\tang}(M)$,
the sequence $u_\eps$ satisfies the hypothesis of~\cite[Proposition 5.7]{Ag-Reno}.
Therefore, there exist a time $T^*\in (0,T]$, 
a curve %${\bf a}:[0,T^*)\to M^n$ with 
${\bf a}=(a_1,\ldots,a_n)\in H^1(0,T^*;M^n)$, 
integers~$\mathbf{d} = (d_1, \, \ldots, \, d_n)\in\Z^n$
%such that~$(\a(t),\db)\in\mathscr{A}^n$ for any $t\in [0,T^*)$
and a curve $\xi\in H^1(0,T^*;\Harm^1(M))$ satisfying \eqref{eq:conv_vorticity_prop}-\eqref{eq:liminfut}. 
The map~$t\mapsto \a(t)$ is continuous, by Sobolev embedding.
Therefore, by taking~$T^*$ small enough, we may assume without 
loss of generality that~$\a(t)\in\mathscr{U}$ for any~$t\in [0, \, T^*)$.
Moreover, since~\eqref{eq:GL_intro} is a gradient flow, the function
\[
t\mapsto  F_\eps^{\extr}(u_\eps(t))-\pi\abs{\log\eps} -n\gamma\qquad t\in [0,T^*) 
\]
is non increasing, for any~$\eps> 0$. 
Therefore, thanks to Helly's selection Theorem (see \cite[Lemma 3.3.3]{Amb-Gi-Sav}) there exists a subsequence of $\eps$ and a non increasing function $\varphi:[0,T^*)\to \R$ such that 
\begin{equation}
\label{eq:helly}
 \lim_{\eps\to 0} \left(F_\eps^{\extr}(u_\eps(t))
  -\pi\abs{\log\eps} - n\gamma\right) 
 = \varphi(t) \qquad \textrm{for any } t\in [0,T^*).
\end{equation}
Moreover, since the initial conditions are well-prepared we have that 
\[
 \varphi(0) 
 %= W^{\intr}(\a^0,\db,\xi^0) + \GG(\theta^0,\a^0,\db, \xi^0).
 = W(\a^0, \, \db, \, \xi^0, \, \theta^0) 
\]
\end{step}

\begin{step}
We have
\[
\int_{0}^T\int_{M}\abs{\Delta_g u_\eps + \frac{1}{\eps^2}(\vert 1- \vert u_\eps\vert^2)u_\eps + \Sh^{2} u_\eps}^2_{g} \Vg \d t = \frac{1}{\vert\log\eps\vert^2}\int_{0}^{T}\int_M\vert \partial_t u_\eps\vert^2_g \, \Vg\d t,
\]
and hence, thanks to \eqref{eq:boundut_lemma},
\[
\int_{0}^T \int_{M}\abs{-\Delta_g u_\eps + \frac{1}{\eps^2}\left( \abs{u_\eps}_g^2-1\right)u_\eps +\Sh^{2} u_\eps }_{g}^2 \, \Vg \d t \le \frac{C}{\vert \log\eps\vert}.
\]
Therefore 
\begin{equation}
\label{eq:Deltazero}
-\Delta_g u_\eps + \frac{1}{\eps^2}\left( \abs{u_\eps}_g^2-1\right)u_\eps +\Sh^{2} u_\eps \to 0\,\,\,\,\hbox{ in }L^2(0,T;L^2(M))
\end{equation}
and, up to subsequence, 
\begin{equation}
\label{eq:Deltazero_point}
-\Delta_g u_\eps + \frac{1}{\eps^2}\left( \abs{u_\eps}_g^2-1\right)u_\eps +\Sh^{2} u_\eps \to 0 \quad \hbox{ in } L^2_{\tang}(M)\quad \hbox{for a.e. } t\in (0,T^*). 
\end{equation}
We let $C\subseteq (0,T^*)$ be the set of those $t$ that satisfy~\eqref{eq:Deltazero_point}.
\end{step}

\begin{step} \label{step:Lambda}
 We consider the $L^2$-orthogonal projection
 of~$j(u_\eps(t))$ onto the subspace of exact $1$-forms,
 i.e.~$\P_\d j(u_\eps(t) = \d\theta_\eps(t)$.
 We select a unique primitive~$\theta_\eps(t)$ by imposing
 that~$\theta_\eps(t)$ integrates to zero over~$M$.
 We define a multi-valued map $\Lambda\colon [0, \, T^*)\to 2^{C^1(M)}$
 as follows: for any~$t\in [0, \, T^*)$ and any~$\theta\in C^1(M)$,
 we will say that $\theta\in\Lambda(t)$ if and only if 
 there exists a subsequence $\eps_h(t)\to 0$ such that
 $\theta_{\eps_h(t)}\to \theta$ strongly in~$H^1(M)$.
 Lemma~\ref{lemma:thetaeps} implies that~$\Lambda(t)$
 is a non-empty, compact subset of~$C^1(M)$ for any~$t\in C$.
 
 We claim that the multi-valued map~$\Lambda$ 
 is \emph{measurable} --- that is, for any 
 closed set~$K\subseteq C^1(M)$, the `inverse image'
 \[
  \Lambda^{-1}(K) := \{t\in [0, \, T^*)\colon \Lambda(t)\cap K\neq\emptyset\}
 \]
 is measurable. By Lemma~\ref{lemma:thetaeps},
 there exists a compact subset~$K_*\subseteq C^1(M)$
 such that~$\Lambda(t)\subseteq K_*$ for any~$t\in C$.
 Therefore, if we show that~$\Lambda^{-1}(K)$ is measurable 
 for any \emph{compact} subset~$K\subseteq C^1(M)$, then it will
 follow that $\Lambda^{-1}(K)$ is measurable
 for any \emph{closed} subset~$K\subseteq C^1(M)$.
 Let~$K$ be a compact subset of~$C^1(M)$. The set~$K$ is
 compact in~$H^1(M)$ too; in particular, $K$ is closed in~$H^1(M)$.
 For any~$\theta\in C^1(M)$, let
 \[
  \dist_K(\theta) := \inf_{u\in K} \norm{\theta - u}_{H^1(K)}
 \]
 If~$\Lambda(t)\cap K \neq \emptyset$ then
 $\liminf_{\eps\to 0}\dist_K(\theta_\eps(t)) = 0$, by definition of~$\Lambda(t)$. 
 Conversely, assume that~$t\in C$ 
 and~$\liminf_{\eps\to 0}\dist_K(\theta_\eps(t)) = 0$. 
 As the sequence~$\theta_\eps(t)$ is relatively compact
 in~$H^1(M)$ when~$t\in C$ (by Lemma~\ref{lemma:thetaeps}), 
 it follows that there exists a subsequence~$\theta_{\eps_h(t)}(t)$ 
 that converges $H^1(M)$-strongly to an element of~$K$. 
 As a consequence, the set~$\Lambda^{-1}(K)$ coincides with
 $\{t\in [0, \, T^*)\colon \liminf_{\eps\to 0}
 \dist_K(\theta_\eps(t)) = 0\}$,
 up to Lebesgue-negligible sets. As~$\theta_\eps\colon [0, \, T^*)\to C^1(M)$
 is continuous for any~$\eps$, it follows that $\Lambda^{-1}(K)$ is measurable.
\end{step}

\begin{step}
 Let $U\colon [0, \, T^*)\times C^1(M)\to\R$
 be the function defined by
 \[
  \begin{split}
   U(t, \, \psi) := \abs{\nabla_{\a} W(\a(t), \, \db, \, \xi(t), \, \psi)}
   = \abs{\nabla_{\a} W^{\intr}(\a(t), \, \db, \, \xi(t)) 
    + 2\pi \sum_{j=1}^n d_j \, i \nabla \psi(a_j(t))} 
  \end{split}
 \]
 for any~$(t, \, \psi)\in [0, \, T^*)\times C^1(M)$.
 The function~$U$ is continuous, because~$\a\in H^1(0, \, T^*; \, M^n)$
 is continuous (by Sobolev embedding). We know (by Lemma~\ref{lemma:thetaeps})
 that the set~$\Lambda(t)$ is compact and non-empty, for any~$t\in C$.
 Therefore, for any~$t\in C$, there exists~$\theta(t)\in\Lambda(t)$
 such that 
 \begin{equation} \label{minimaltheta}
  U(t, \, \theta(t)) = \min\left\{U(t, \, \psi)\colon \psi\in\Lambda(t)\right\}
 \end{equation}
 As~$\Lambda$ is measurable, we can choose~$\theta(t)$
 in such a way that the map~$\theta\colon[0, \, T^*)\to C^1(M)$
 is measurable (see e.g.~\cite[Theorem~9.1.(iii)]{Wagner-Survey}).
 
 We fix~$t\in C$. As~$\theta(t)\in\Lambda(t)$, there exists 
 a subsequence $\eps_h(t)\xrightarrow{h\to +\infty} 0$,
 which may depend on~$t$, such that $\theta_{\eps_h(t)}(t)\to\theta(t)$
 strongly in~$H^1(M)$ as~$h\to+\infty$.
 By~\cite[Lemma~4.9]{Ag-Reno}, we may extract a further
 subsequence (still denoted~$\eps_h(t)$) and find 
 a vector field $u_0(t)\in W^{1,p}_{\tang}(M)$
 such that %$\abs{u_0(t)}_g=1$ a.e. and,
 \begin{equation*}
  u_{\eps_h(t)}(t) \to u_0(t) \qquad \textrm{strongly in }
%   H^1_{\tang ,\text{loc}}(M\setminus\left\{a_1(t),\ldots,a_n(t)\right\})
%   \textrm{ and in } 
  W^{1,p}_{\tang}(M)
 \end{equation*}
 for any~$p\in [1, \, 2)$. 
 By Proposition~\ref{prop:gammaliminf_extr},
 $u_0(t)$ must have the form~$u_0(t) = e^{i\theta(t)} u^*(t)$
 where~$u^*(t)$ is a canonical harmonic field for~$(\a(t), \, \db, \, \xi(t))$.
 Up to modifying~$\theta(t)$ by an additive constant,
 we may further assume that~$u^*(t)=u^*[\a(t), \, \db, \, \xi(t)]$
 where~$u^*[\a(t), \, \db, \, \xi(t)]$ 
 satisfies~\eqref{canonicalvf}--\eqref{pointvalue}\footnote{By 
 applying Kuratowski and Ryll-Nardzewski's 
 selection theorem (see~e.g.~\cite[Theorem~4.1]{Wagner-Survey}), 
 along the lines of Step~\ref{step:Lambda},
 we can make sure that the function $t\mapsto u_0(t, \, b^0)$
 is measurable (where~$b^0\in M$ is the same reference point
 as in~\eqref{pointvalue}). It follows that 
 $u^*(t) = e^{i\kappa(t)} u^*[\a(t), \, \db, \, \xi(t)]$
 where the function~$\kappa\colon [0, \, T^*)\to\R$
 is measurable, because it is completely
 determined by the values of~$u^*(t)$ and~$u^*[\a(t), \, \db, \, \xi(t)]$
 at the point~$b^0$. Therefore, the map~$t\mapsto\tilde{\theta}(t) 
 := \theta(t) + \kappa(t)$ is still measurable.}.
 The proposition follows, by
 taking into account the results of Section~\ref{sect:inhomGL}
 --- in particular, \eqref{eq:liminf_E} follows from Proposition~\ref{prop:gammaliminf_extr},
 while~\eqref{eq:liminf_grad_E} follows from
 Proposition~\ref{prop:liminfgrad} and~\eqref{minimaltheta}.
 \qedhere
\end{step}

\end{proof}

\subsubsection{Proof of Theorem \ref{th:main1}}

We can finally prove our main result, Theorem \ref{th:main1},
which is a consequence of Propositions \ref{prop:gammaliminf_extr},
\ref{prop:liminfgrad} and~\ref{prop:main_limit_proc1}.

First of all, we recall that equation \eqref{eq:GL_intro} is the $L^2$-gradient flow of the extrinsic Ginzburg-Landau energy $F_\eps^{\extr}$. 
More precisely, for any $\eps>0$ and for any $v\in H^1_{\tang}(M)$ we let 
\[
e_\eps(v):=\frac{1}{2}\left(\abs{D v}_g^2 + \abs{\Sh v}^2_g + \frac{1}{2\eps^2}(1-\vert v\vert^2)^2\right),
\]
and we define
\[
\displaystyle E_\eps(v):=
\begin{cases}
F_\eps^{\extr}(v) -\pi n\vert \log\eps\vert-n\gamma 
 &\hbox{if } e_\eps(v)\in L^1(M)\\[3pt]
+\infty &\hbox{otherwise in }L^2_{\tang}(M). 
\end{cases}
\]
Then we set $X_\eps:= L^2_{\tang}(M)$ endowed with the norm $\norm{v}_{X_\eps}:=\frac{1}{\vert \log\eps\vert^{1/2}}\norm{v}_{L^2(M)}$.
It is easy to check that the subdifferential of $E_\eps$ (with respect to the scalar product in $X_\eps$) is singlevalued and is given by  
\[
\partial_{X_\eps} E_\eps(v) =\abs{\log\eps}\left(-\Delta v + \frac{1}{\eps^2}(\vert v\vert_g^2 -1)v +\Sh^2 v\right).
\]
Therefore the evolution \eqref{eq:GL_intro} rewrites as the gradient flow
\[
\begin{cases}
\partial_t u_\eps(t) + \partial_{X_\eps}E_\eps(u_\eps) = 0 \\[3pt]
u_\eps(0) = u_{\eps}^0.
\end{cases}
\]
Equivalently, $u_\eps$ is a {\itshape curve of maximal slope} for the energy $E_\eps$ with respect to the slope $\| \partial_{X_\eps}E_\eps\|_{X_\eps}$ and thus it satisfies, for any $0\le s\le t <T^*$
\begin{align}
\label{eq:gradfloweps}
\frac{1}{2}\int_{s}^t\norm{\partial_t u_\eps(\tau)}_{X_\eps}^2 \d \tau
 + \frac{1}{2}\int_{s}^{t}\norm{\partial_{X_\eps}E_\eps(u_\eps(\tau))}^2_{X_\eps} \d \tau + E_\eps(u_\eps(t)) = E_\eps(u_\eps(s)).
\end{align}
%Note in particular that $t\mapsto E_\eps(u_\eps(t))$ is non increasing in $[0,T^*)$.
The solution $u_\eps$ verifies the hypothesis of Proposition \ref{prop:gammaliminf_extr} at any fixed $t$. 
Therefore, there exists a non increasing function $\varphi$ such that for any $t\in (0,T^*)$ (see \eqref{eq:conv_energy}),
\begin{equation}
\label{eq:liminf_E1}
\lim_{\eps\to 0}E_\eps(u_\eps(t)) = \varphi(t),
\end{equation}
and (see~\eqref{eq:well_prepared})
\[
\varphi(0) 
%= W^{\intr}(\a^0,\db,\xi^0) + \GG(\theta^0,\a^0,\db, \xi^0)
= W(\a^0, \, \db, \, \xi^0, \, \theta^0)
\]
Moreover for almost any $t\in (0,T^*)$, Equation~\eqref{eq:liminf_E} reads
\[
\varphi(t) \ge W(\a(t), \, \db, \, \xi(t), \, \theta(t)). 
% W^{\intr}(\a(t),\db,\xi(t)) + \GG(\theta(t),\a(t),\db, \xi(t)). 
\]
where the function $\theta(t)\in C^{1}(M)$ is such that 
\[
\partial_{\theta}\GG(u^*[\a(t), \, \db, \, \xi(t)], \, \theta(t)) = 0
\]
(see~\eqref{eq:euler_theta}).
Then, Proposition \ref{prop:liminfgrad} and Fatou's Lemma  imply 
that for any $s$, $t\in [0,T^*)$ with $s\le t$ there holds
\begin{align*}
\liminf_{\eps\to 0}\int_{s}^{t}\norm{\partial_{E_\eps} u_\eps(s)}_{X_\eps}^2
\d s &\ge \frac{1}{2\pi}\int_{s}^t \abs{\nabla_{\a}W(\a(\tau), \, \db, \, \xi(\tau),\,\theta(\tau))}_g^2 \d \tau
%& \ge \frac{1}{2\pi}\int_{s}^t \abs{\nabla_{\a}^{-}W}^2((\a(\tau), \, \db, \, \xi(\tau),\,\theta(\tau),\,\varphi(\tau))\d \tau.
\end{align*}
Moreover, recalling Proposition \ref{prop:main_limit_proc1} (see in particular \eqref{eq:liminfut}), we have 
\[
\liminf_{\eps\to 0}\frac{1}{2}\int_{s}^t \norm{\partial_t u_\eps(\tau)}^2_{X_\eps}\d \tau \ge \frac{\pi}{2} \int_{s}^t \abs{\a'(\tau)}^2_{g}\d \tau. 
\]
Therefore, from \eqref{eq:gradfloweps} we deduce that for almost any $t,s\in [0,T^*)$ with $s\le t$ there holds 
\begin{equation*}
\begin{split}
&\frac{\pi}{2} \int_{s}^t \abs{\a'(\tau)}^2_{g}\d \tau + \frac{1}{2\pi}\int_{s}^t \abs{\nabla_{\a}W(\a(\tau), \, \db, \, \xi(\tau),\,\theta(\tau))}_g^2 \d \tau + \varphi(t) \le \varphi(s),\\
& \varphi(t)\ge W(\a(t), \, \db, \, \xi(t),\,\theta(t)),\qquad \varphi(0) = W(\a_0, \, \db, \, \xi_0,\,\theta_0)
\end{split}
\end{equation*}
that is \eqref{eq:gradflowW_mainth}.
Note that the function $\varphi$ is a BV function by construction. Moreover, thanks to the above inequality it satisfies, for almost any $t\in (0,T^*)$,
\[
-\frac{\d}{\d t}\varphi(t) \ge \frac{\pi}{2}\abs{\a'(t)}^2_{g} + \frac{1}{2\pi}\abs{\nabla_{\a}W(\a(t), \, \db, \, \xi(t),\,\theta(t))}_g^2
\] 
This completes the proof of Theorem~\ref{th:main1}.

\section*{Acknowledgements}
{ \small GC \& AS are members of the GNAMPA (Gruppo Nazionale per l'Analisi Matematica, la Probabilit\`a e le loro Applicazioni)
group of INdAM.  
AS acknowledges the partial support of the MIUR-PRIN Grant 2017 ''Variational methods for stationary and evolution problems with singularities and interfaces''.
}

\bibliographystyle{plain}
\bibliography{vorticesGL}

\begin{thebibliography}{10}

\bibitem{Amb-Gi-Sav}
Luigi Ambrosio, Nicola Gigli, and Giuseppe Savar\'{e}.
\newblock {\em Gradient flows in metric spaces and in the space of probability
  measures}.
\newblock Lectures in Mathematics ETH Z\"{u}rich. Birkh\"{a}user Verlag, Basel,
  second edition, 2008.

\bibitem{BBH}
F.~Bethuel, H~Brezis, and F~H{\'e}lein.
\newblock {\em Ginzburg-{L}andau vortices}.
\newblock Progress in Nonlinear Differential Equations and their Applications,
  13. Birkh\"auser Boston, Inc., Boston, MA, 1994.

\bibitem{Ag-Reno}
G.~Canevari and A.~Segatti.
\newblock Dynamics of {G}inzburg-{L}andau vortices for vector fields on
  surfaces.
\newblock Preprint arXiv~https://arxiv.org/abs/2108.01321, 2015.

\bibitem{AGM-VMO}
G.~Canevari, A.~Segatti, and M.~Veneroni.
\newblock Morse's index formula in {VMO} for compact manifolds with boundary.
\newblock {\em J. Funct. Anal.}, 269(10):3043--3082, 2015.

\bibitem{gamma-discreto}
Giacomo Canevari and Antonio Segatti.
\newblock Defects in nematic shells: a {$\Gamma$}-convergence
  discrete-to-continuum approach.
\newblock {\em Arch. Ration. Mech. Anal.}, 229(1):125--186, 2018.

\bibitem{gilb_trudi}
David Gilbarg and Neil~S. Trudinger.
\newblock {\em Elliptic partial differential equations of second order}.
\newblock Springer-Verlag, 2001.
\newblock Reprint of the 1998 edition.

\bibitem{JerrardIgnat_full}
R.~Ignat and R.~L. Jerrard.
\newblock Renormalized energy between vortices in some {G}inzburg-{L}andau
  models on 2-dimensional {R}iemannian manifolds.
\newblock {\em Arch. Ration. Mech. Anal.}, 239(3):1577--1666, 2021.

\bibitem{lin1}
Fang~Hua Lin.
\newblock Some dynamical properties of {G}inzburg-{L}andau vortices.
\newblock {\em Comm. Pure Appl. Math.}, 49(4):323--359, 1996.

\bibitem{LubPro92}
T.~C. Lubensky and J.~Prost.
\newblock Orientational order and vesicle shape.
\newblock {\em J. Phys. II France}, 2(3):371--382, 1992.

\bibitem{NapVer12L}
G.~Napoli and L.~Vergori.
\newblock Extrinsic curvature effects on nematic shells.
\newblock {\em Phys. Rev. Lett.}, 108(20):207803, 2012.

\bibitem{NapVer12E}
G.~Napoli and L.~Vergori.
\newblock Surface free energies for nematic shells.
\newblock {\em Phys. Rev. E}, 85(6):061701, 2012.

\bibitem{Nelson02}
D.~R. Nelson.
\newblock Toward a tetravalent chemistry of colloids.
\newblock {\em Nano Lett.}, 2(10):1125--1129, 2002.

\bibitem{SS-GF}
Etienne Sandier and Sylvia Serfaty.
\newblock Gamma-convergence of gradient flows with applications to
  {G}inzburg-{L}andau.
\newblock {\em Comm. Pure Appl. Math.}, 57(12):1627--1672, 2004.

\bibitem{scott95}
Chad Scott.
\newblock {$L^p$} theory of differential forms on manifolds.
\newblock {\em Trans. Amer. Math. Soc.}, 347(6):2075--2096, 1995.

\bibitem{ssvPRE}
A.~Segatti, M.~Snarski, and M.~Veneroni.
\newblock Equilibrium configurations of nematic liquid crystals on a torus.
\newblock {\em Phys. Rev. E}, 90(1):012501, 2014.

\bibitem{ssvM3AS}
A.~Segatti, M.~Snarski, and M.~Veneroni.
\newblock Analysis of a variational model for nematic shells.
\newblock {\em Math. Models Methods Appl. Sci.}, 26(10):1865--1918, 2016.

\bibitem{Straley71}
J.~P. Straley.
\newblock Liquid crystals in two dimensions.
\newblock {\em Phys. Rev. A}, 4(2):675--681, 1971.

\bibitem{Wagner-Survey}
Daniel~H. Wagner.
\newblock Survey of measurable selection theorems.
\newblock {\em SIAM Journal on Control and Optimization}, 15(5):859--903, 1977.

\end{thebibliography}

\end{document}